\newtheorem{theorem}{Theorem}[section]
\newtheorem{lemma}[theorem]{Lemma}
\newtheorem{proposition}[theorem]{Proposition}
\newtheorem{corollary}[theorem]{Corollary}
\newtheorem*{theorem*}{Theorem}
\newtheorem*{conjecture}{Conjecture}
\theoremstyle{definition}
\newtheorem{definition}[theorem]{Definition} 
\newtheorem{question}[theorem]{Question} 
\newtheorem{eg}[theorem]{Example}
\newtheorem{remark}[theorem]{Remark}
\newcommand{\ee}[1]{\mathbb{E}\left[ #1 \right]} 
\newcommand{\cee}[2]{\mathbb{E}\left[\left. #1 \right| #2 \right]} 
\newcommand{\var}[1]{Var\left( #1 \right)}
\DeclareMathOperator{\Tr}{Tr}
\theoremstyle{plain}
\newcommand{\no}{\noindent}
\definecolor{mycolour}{RGB}{150,0,0}
\begin{document}

\title{On Critical Points of Random Polynomials and Spectrum of Certain Products of Random Matrices.}
\author{Tulasi Ram Reddy A}
\begin{titlepage}
	\begin{center}
		\vspace{.5in}
	\begin{Large}
		\textbf{On Critical Points of Random Polynomials and}\\
		 \textbf{ Spectrum of Certain Products of Random Matrices}\\
	\end{Large}
		\vspace{0.8in}
		{\large  A Dissertation \\ 
			submitted in partial fulfilment  \\
		\vspace{0.04in}
	 of the requirements for the award of the\\ 
	 	\vspace{0.04in}
	 	degree of} \\ 		
		\vspace{0.2in}
	{{\Large
		\textbf{Doctor of Philosophy}}}\\  
		\vspace{.05in}
		{\large by}\\
		\vspace{0.05in}
		{\large Tulasi Ram Reddy Annapareddy}\\
		\includegraphics[scale=.2]{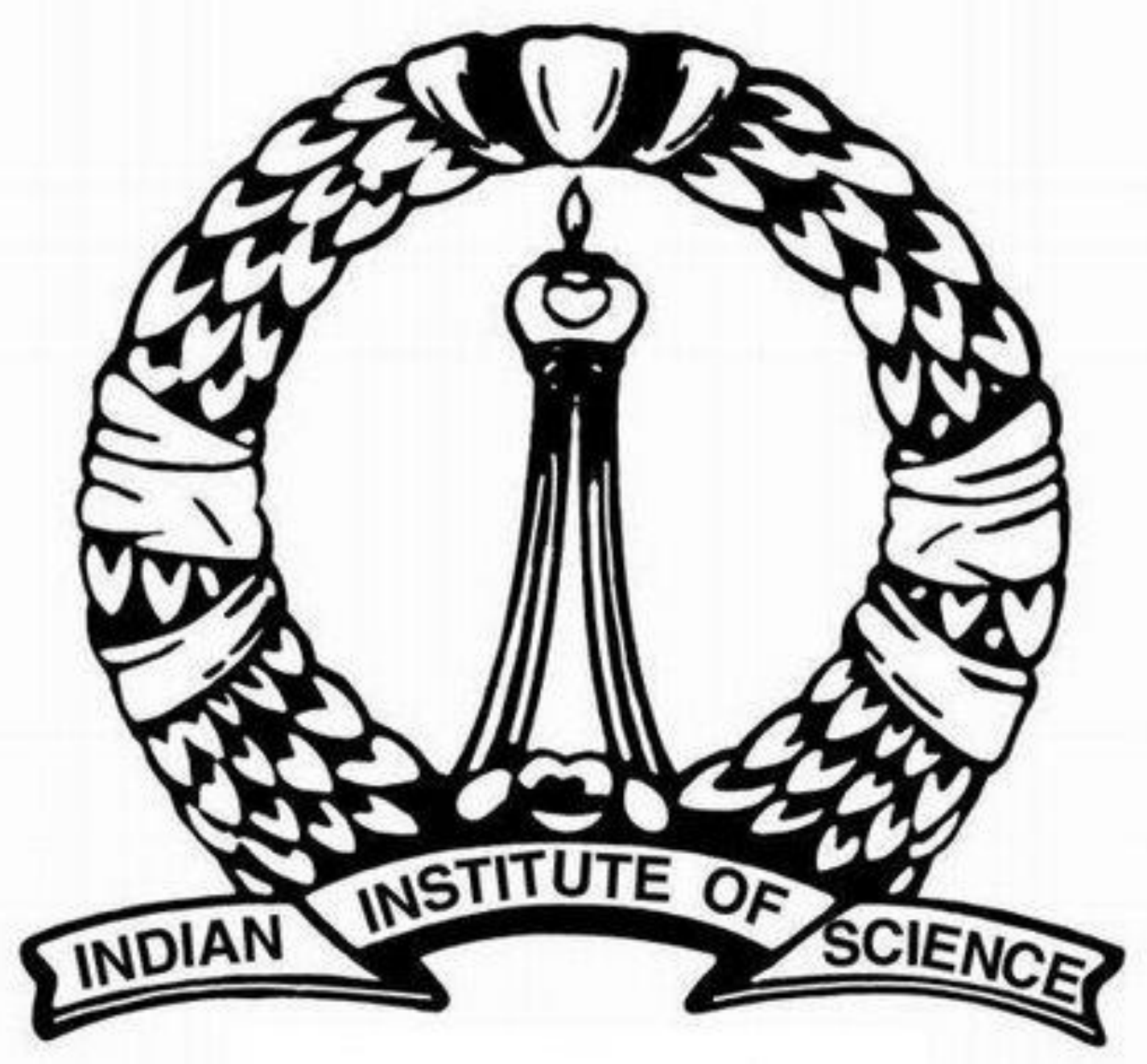} \\
		{\large Department of Mathematics \\
		Indian Institute of Science \\
			Bangalore - 560012 \\
		July 2015 \\}
	\end{center}
\end{titlepage}

\frontmatter
\chapter{Declaration}
\vspace{0.5in}
\noindent I hereby declare that the work reported in this thesis is entirely original and has
been carried out by me under the supervision of Prof.~Manjunath Krishnapur at the Department of
Mathematics, Indian Institute of Science, Bangalore. I further declare
that this work has not been the basis for the award of any degree, diploma, fellowship,
associateship or similar title of any University or Institution.\\
\vspace*{1in}

\noindent $\begin{array}{lcr}
\textrm{Tulasi Ram Reddy A} & \hspace*{1.95in} &~ \\
\textrm{S. R. No. 6910-110-101-08085} & \hspace*{1.95in}   &~ \\
\textrm{Indian Institute of Science} & \hspace*{1.95in}   & ~ \\
\textrm{Bangalore} & \hspace*{1.95in}  & ~ \\
~ & \hspace*{1.95in}   & ~ \\
~ & \hspace*{1.95in}   & ~ \\
~ & \hspace*{1.95in}   & ~ \\
~& \hspace*{1.95in}   & \textrm{Prof.~Manjunath Krishnapur} \\
~& \hspace*{1.95in}  & \textrm{(Research advisor)}
\end{array}$

\frontmatter

\clearpage
\thispagestyle{empty}
\par\vspace*{.35\textheight}{\centering TO \\[2em]
\large\it My Parents\\
and\\
\large\it Teachers \par}

\chapter{Acknowledgements}

For an outsider a doctoral thesis might appear as a solitary endeavour. But several people (including virtual communities) have contributed in various forms to this thesis. I make an attempt here to thank the people who have contributed to the same. The set of people I have acknowledged here is a subset of the many people who have helped me in this endeavour and is far from complete.
 
 I am deeply indebted to my thesis advisor Manjunath Krishnapur. It is a privilege to be his student. To say the least he has played several roles from being my thesis advisor to a great companion. I believe that I have also acquired some of his inexhaustible enthusiasm towards mathematics, which is indeed very contagious.
 
 My interest in the study of random polynomials surged after discussions with Zakhar Kabluchko during the Trondheim spring school 2013, along with several conversations with Manjunath. 
 
  I have immensely benefited by the courses offered in the Mathematics department at IISc. The many thought provoking talks and seminars held at IISc, ISI, ICTS and TIFR-CAM have also been helpful.

Probabilists in Bangalore deserve a special mention for their efforts in organising various academic activities in the city. They, along with many probabilists visiting Bangalore, have certainly made this city very lively and thereby stimulating my research activity at various levels.

I was introduced to mathematics at the Indian Statistical Institute, Kolkata. Courses taught by BV Rao, late SC Bagchi, Arup Bose, Gopal Basak, Arnab Chakraborthy and Probal Chaudhuri among others  created first impressions on mathematics and probability in me. Arni Srinivasa Rao encouraged me to pursue research in mathematics and has been great support since. From my high school teachers I recall K Subba Rao and M Radhakrishna who constantly encouraged me to do mathematics. 

I take this opportunity to thank various organizations for providing their support generously. I thank NBHM for providing me travel grant to attend the School on Random matrices and Growth models in 2013 in Trieste, Italy. Apart from this, I was also supported by NBHM during my first year of Ph.D program. I thank CSIR for supporting me with the SPM fellowship. I am grateful to KVPY which provided me a fellowship during my undergraduate days. These fellowships have helped me make choices without any confusion at various junctures in my life.

My friends at IISc have bestowed me with several moments which can be cherished forever. This wouldn't have been possible without people like Arpan, Divakaran, Jaikrishnan, Kartick, Nanda, Pranav, Prathamesh, Rajeev, Sayani and Vikram. 

Kartick, Manjunath and Nanda read through this thesis carefully before pointing out many mistakes. They also suggested many invaluable changes. A special thanks to them. I would also like to thank them along with Koushik Saha for having good discussions in the subject.

I thank the office staff in the Mathematics department and the many workers at IISc for ensuring an environment which facilitated a very smooth stay for me in the past five years.

Many of my endeavours have posed several challenges for my parents at times. Regardless, they have endorsed my decisions and backed me unflinchingly all along. No less was the patience and love unveiled by my sister and brother-in-law. My grandfather, who probably is my first friend ever, used to present creative answers to my questions during our rounds in the fields. He indeed deserves a special mention here. My aunts, uncles and cousins have made my visits to home very eventful and something to crave for. Friends were constantly in touch with me, despite no efforts from my side in reaching to them. I am sure all of them will be happy seeing this thesis.

\chapter{Abstract}
	In the first part of this thesis, we study critical points of random polynomials. We
	choose two
	deterministic sequences of complex numbers, whose empirical measures converge to the
	same probability measure in complex plane. We make a sequence of polynomials whose
	zeros are chosen from either of sequences at random. We show that the limiting
	empirical
	measure of zeros and critical points agree for these polynomials. As a consequence we
	show that when we randomly perturb the zeros of a deterministic sequence of
	polynomials,
	the limiting empirical measures of zeros and critical points agree. This result can be
	interpreted as an extension of earlier results where randomness is reduced. Pemantle
	and
	Rivin initiated the study of critical points of random polynomials. Kabluchko proved
	the result
	considering the zeros to be i.i.d. random variables.\\

		  In the second part we deal with the spectrum of products of Ginibre matrices. Exact eigenvalue density is known for a very few matrix ensembles. For the known ones they often lead to determinantal point process. Let $X_1,X_2,\dots, X_k$ be i.i.d Ginibre matrices of size $n \times n$ whose entries are standard complex Gaussian random variables. We derive eigenvalue density for matrices of the form $X_1^{\epsilon_1}X_2^{\epsilon_2}\dots X_k^{\epsilon_k}$, where $\epsilon_i=\pm1$ for $i=1,2,\dots,k$. We show that the eigenvalues form a determinantal point process. The case where $k=2$, $\epsilon_1+\epsilon_2=0$ was derived earlier by Krishnapur. In the case where $\epsilon_i=1$ for $i=1,2,\dots,n$ was derived by Akemann and Burda. These two known cases can be obtained as special cases of our result.

\tableofcontents

\mainmatter

\chapter{Introduction}
\label{ch:introduction1}
The fundamental theorem of algebra states that every polynomial of degree $n$ of a single variable has, counted with multiplicity, exactly $n$ zeros (roots) in the complex plane.  Abel and Galois proved that the roots of any polynomial of degree $5$ or higher cannot be expressed in terms of radicals. Like in many other problems, where exact formulae are not known (or the formulae not amenable to analysis), one may study the statistics of a `typical' polynomial. This is done by equipping a probability measure on the space of polynomials (or by introducing randomness within the polynomial) and choosing a random polynomial according to this measure. There are two natural ways of inducing randomness into polynomials - one by considering the characteristic polynomials of a random matrix and other by choosing the coefficients of polynomials to be random variables. Both cases are of interest. The former leads to the study of random matrices while the latter to the theory of random polynomials.

In the theory of random polynomials, the central problem is to understand the behaviour of zeros which is studied by choosing coefficients to be random variables (often independent). The study of random polynomials many times provide us with interesting phenomenon. For example, the zeros of Kac's polynomials, which are defined as random polynomials with i.i.d. complex Gaussian random variables as coefficients, accumulate near the unit circle. Pemantle and Rivin, in \cite{pemantle}, considered a sequence of polynomials whose zeros are i.i.d. complex random variables. They conjectured that the empirical measures of zeros and critical points of these polynomials agree in limit. 

The first part of this thesis is inspired by Kabluchko's proof \cite{kabluchko} to the problem of Pemantle and Rivin. Here we construct a random sequence of zeros by choosing its terms from two predefined deterministic sequences. It is shown that for the sequence of random polynomials, whose zeros are the terms of the random sequence constructed above, the limiting empirical measures of zeros and critical points agree. This phenomenon fails in general for deterministic sequence of polynomials. Examples of deterministic sequence of polynomials where the limiting measures of zeros and critical points don't agree can be constructed. However, as a consequence of the previous result, it can be shown that if we slightly perturb the zeros of these polynomials, then the limiting measures of zeros and critical points will agree.  

Hannay in \cite{hannay}  and Hanin in \cite{hanin1} observed that the critical points of random polynomials are closely paired with the zeros of the polynomials. In this setting, we study the matching distance between zeros and critical points. When the zeros of the polynomials are all i.i.d. real valued random variables, it is shown that the matching distance between zeros and critical points of these polynomials will remain bounded if the random variables have finite first moment. It is also shown that in limit the bound is exactly the first moment of these random variables.



The theory of random matrices deals with the study of eigenvalues of a random matrix. Finding exact eigenvalue density is an important problem in random matrix theory. There are only a handful matrix ensembles for which the exact eigenvalue density is known. For these ensembles, it is often true that the eigenvalues constitute an important class of point processes called determinantal point processes for which a theory and framework is already available for analysis. Of the known ensembles, very few are non-hermitian matrix ensembles. Ginibre, in \cite{ginibre}, derived the eigenvalue density for a matrix whose entries are i.i.d. complex Gaussian random variables. These matrices are called Ginibre matrices since. Krishnapur, in \cite{manjunath} derived the eigenvalue density for $A^{-1}B$, where $A$ and $B$ are independent Ginibre matrices. Akemann and Burda in \cite{akemann} derived the eigenvalue density for random matrices obtained as the product of independent Ginibre matrices. A generalization for these matrices is to consider product of independent matrices where each matrix or its inverse is a Ginibre matrix. In this thesis, the eigenvalue density for these matrices is derived and it is shown that they form a determinantal point process. This result generalizes  all the previously mentioned results.

Studying the behaviour of real eigenvalues for real random matrices and real zeros for real random polynomials have posed different challenges (due to the lack of conventional symmetries) and simultaneously offered various insights. A different problem on the products of i.i.d. real matrices of fixed size with i.i.d. entries was considered by Lakshminarayan in \cite{arul}. He considered the case when the entries are i.i.d. real Gaussian random variables. He conjectured that the probability of the product of these matrices have all real eigenvalues, converge to $1$ as the size of the product increase to infinity. He established this conjecture for the matrices of size $2 \times 2$. Forrester, in \cite{forrester}, proved this conjecture for any $k\geq1$. It is natural to believe that this phenomenon is universal and hence may hold for any matrix with i.i.d. entries.  We show this in a case where the entries of these matrices are distributed according to the probability measure $\mu$ which has an atom. 

\section{Outline}
We now outline the contents of this thesis briefly. This thesis broadly deals with two themes.  In the first part we study the zeros and critical points of random polynomials, which is covered in Chapters 2, 3 and 4.

\begin{itemize}
\item
In Chapter 2, a brief history of the results relating critical points and zeros of random polynomials are given. We deal with sequences of deterministic polynomials to provide explicit examples in which the limiting measures of zeros and critical points do not agree. Thereafter, a little randomness is introduced into these polynomials which ensures that the limiting measures agree. We also discuss some of their consequences. 
\item
In Chapter 3, we prove that the limiting measure of zeros and critical points of a sequence of random polynomials agree when a little randomness is introduced.  The results stated in Chapter 2 are proved.
\item
In Chapter 4, we consider the matching problem between zeros and critical points of random polynomials.  We show that when the zeros are real and i.i.d. from a given distribution with finite first moment, then the $\ell^1$ matching distance will be finite. We also consider the spacing between the zeros and critical points of the random polynomials. In the case where zeros are i.i.d. exp($\lambda$) random variables, we show that the extremal critical point is much closer to the extremal zero of the random polynomial.

\end{itemize}

In the second part we study the eigenvalues of certain products of random matrices. This is covered in Chapters 5 and 6.

\begin{itemize}
\item
In Chapter 5, we derive the exact eigenvalue density for $X=X_1^{\epsilon_1}X_2^{\epsilon_2}\dots X_n^{\epsilon_n}$, where $\epsilon_i=\pm1$ and $X_i$ are i.i.d. complex Ginibre matrices. In other words, we derive the eigenvalue density for products of complex Ginibre matrices of fixed size in which some of them are inverted. It is also observed that they form a determinantal point process.
\item
In Chapter 7, we present a stronger version of the conjecture, by Lakshminarayan in \cite{arul}. We prove the conjecture in a special case when the entries of the matrices are distributed according to $\mu$ which has an atom.

\end{itemize}

%
%
%
%
%
%
%
%

\chapter{Critical points of random polynomials}
\label{ch:criticalpoints4}
\section{Introduction}In this chapter we will investigate the distribution of the critical points in relation to the zeros of a polynomial. For a holomorphic function $f:\mathbb{C} \rightarrow \mathbb{C}$ a point   $z \in \mathbb{C}$ is called a critical point of $f$ if $f'(z)=0$.


The oldest known result relating the  zeros and critical points of a polynomial is Gauss-Lucas theorem, which states that the critical points of any polynomial with complex coefficients lie inside the convex hull formed by the zeros of the polynomial.

\begin{theorem}[Gauss-Lucas;  See Chapter 2, Theorem 6.1 in~\cite{marden}]\label{guass-lucas}
	Let $P$ be a non-constant complex polynomial then the zeros of $P'$ are contained in the convex hull formed by the zeros of $P$.
\end{theorem}

In general nothing more can be said. Our interest is in dealing with sequences of polynomials, usually randomness included, with increasing degrees. We consider the case in which the point cloud made from the zeros of these polynomials will converge to a probability measure in the complex plane. We want to understand the behaviour of critical points of these polynomials. We recall the definition of weak convergence.

\begin{definition}
	For a sequence of probability measures, $\{\mu_n\} \text{ and } \mu$ on $\text{$\mathbb{C}$}$, we say that $\mu_n \xrightarrow{w} \mu$ \textit{weakly}, if  for any $ f\in C_c^{\infty}(\text{$\mathbb{C}$})$, we have $\lim\limits_{n \rightarrow \infty}\int_{X}^{}fd\mu_n = \int_{X}^{}fd\mu$. 
\end{definition}
The following definition formalizes the notion of point cloud converging to a probability measure. Here the point cloud being the collection of terms from a sequence of complex numbers.
\begin{definition}We say a sequence of complex numbers $\{a_n\}_{n\geq1}$ to be \textit{$\mu$-distributed} if its empirical measures $\frac{1}{n}\sum_{k=1}^{n}\delta_{a_k}$ converge weakly to the probability measure $\mu$.
\end{definition}

In the next section we study the critical points of deterministic sequence of polynomials. We show that if all the zeros are confined in regions that are well separated, then the critical points also confine to these regions. Then,  in subsequent sections we discuss a few examples in which the limiting measures of zeros and of critical points do not agree. 
Later we give a brief overview of existing results in the literature where a sequence of random polynomials are considered. In all these cases it was shown that the limiting measures of zeros and critical points agree.

 In the Section 2.3 we show the results we have obtained and discuss their consequences. In the first result we construct the zeros sequence, for each term  choose the term from one of the two deterministic sequences at random. We construct a sequence of polynomials whose zeros are the terms of this sequence. We show that the limiting empirical measures of zeros and critical points of these polynomials agree. We then state the corollaries of this result, where we perturb this sequence randomly and show that the limiting measure of zeros and critical points agree. In our second result we consider a random rational function which can be used to get a generalized derivative and show that the limiting distribution of  zeros and poles agree. As a corollary of this we show that if we choose a random subsequence of a deterministic sequence then the limiting measures of zeros and critical points agree. In the last section we prove the corollaries mentioned earlier. We defer the proofs of the theorems to the next chapter. 

We will recall a well known proof of Gauss-Lucas theorem. If $z_1,z_2,\dots,z_n$ are the roots of the polynomial $P$, then for some $c$, $P(z)=c(z-z_1)(z-z_2)\dots(z-z_n)$. Define $L(z):=\frac{P'(z)}{P(z)}=\sum_{k=1}^{n}\frac{1}{z-z_k}$. If $z$ is a zero of $P'$ and not equal to any of the $z_i$s, then $L(z)=0$. Hence,

\[
\sum_{k=1}^{n}\frac{1}{z-z_k}=0 \text{\hspace{12 pt}  or, \hspace{12 pt} } \sum_{k=1}^{n}\frac{\overline{z}-\overline{z}_k}{|z-z_k|^2}=0.
\]
Therefore if $L(z)=0$, then $z$ satisfies,
\[
z=\frac{\sum_{k=1}^{n}\frac{1}{|z-z_k|^2}z_k}{\sum_{k=1}^{n}\frac{1}{|z-z_k|^2}}.
\]
In the above equation $z$ is expressed as a convex combination of $z_k$s. In the other case where $z$ is one of the $z_k$s, it trivially true that $z$ is in the convex hull formed by $z_1,z_2,\dots,z_n$.

For a different proof of this theorem the reader can refer to \cite[Chapter-2, Theorem 6.1]{marden}. After the proof of Gauss-Lucas theorem, there have been several results concerning  critical points and zeros of  polynomials. Interested reader may see the references in \cite{pemantle}. Several conjectures on the same can be found in \cite{mardenconjectures} and \cite{borcea}. A brief survey on results connecting zeros and critical points of polynomials can be found in \cite{sury}.
In the proof of Gauss-Lucas theorem we have defined a function $L(z)=\sum_{k=1}^{n}\frac{1}{z-z_k}$. It is interpreted as potential at the point $z$ due to unit charges present at points $z_1,z_2,\dots,z_n$. In studying critical points this potential function plays a key role. All the results in this chapter are obtained by analyzing this function.

\section{Critical points of a sequence of deterministic polynomials.}

Consider  a sequence of deterministic polynomials. In the case where all the zeros of these polynomials are in a bounded convex set, Gauss-Lucas theorem asserts that the critical points of these polynomials lie inside the same set. In this context we state a well known related result by Walsh.
\begin{theorem}[J.L.Walsh ~\cite{walsh}]
Let $C_1$, $C_2$ be disks with centres $c_1$, $c_2$ and radii $r_1$, $r_2$. Let $P$ be a polynomial of degree $n$ with all its zeros in $C_1\cup C_2$, say $n_1$ zeros in $C_1$ and $n_2$ zeros in $C_2$. Then $P$ has all its critical points in $C_1\cup C_2 \cup C_3$, where $C_3$ is the disk with centre $c_3$ and radius $r_3$ given by
\[
c_3=\frac{n_1c_2+n_2c_1}{n}, \hspace{5 pt} r_3=\frac{n_1r_2+n_2r_1}{n}.
\]
Furthermore, if $C_1$, $C_2$ and $C_3$ are pairwise disjoint, then $C_1$ contains $n_1-1$ critical points, $C_2$ contains $n_2-1$ critical points and $C_3$ contains $1$ critical point.
\end{theorem}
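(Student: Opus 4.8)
The plan is to work with the logarithmic derivative $L(z)=P'(z)/P(z)=\sum_{k=1}^{n}\frac{1}{z-z_k}$, exactly as in the proof of Gauss--Lucas recalled above, but bookkeeping which disk each zero lies in. Label the zeros so that $z_1,\dots,z_{n_1}\in C_1$ and $z_{n_1+1},\dots,z_n\in C_2$ (the cases $n_1=0$ or $n_2=0$ reduce immediately to Gauss--Lucas, with $C_3$ degenerating to $C_1$ or $C_2$, so assume $n_1,n_2\ge 1$). The key tool is the elementary lemma: if $w_1,\dots,w_m$ all lie in a closed disk $D$ with centre $c$ and radius $r$, and $z\notin D$, then $\sum_{j=1}^{m}\frac{1}{z-w_j}=\frac{m}{z-\zeta}$ for some $\zeta\in D$. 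To prove this I would observe that the M\"obius map $w\mapsto\frac{1}{z-w}$ has its pole at $w=z\notin D$, hence maps $D$ bijectively onto a bounded closed disk $D'$ that does not contain $0$; the images $\frac{1}{z-w_j}$ lie in $D'$, so does their average by convexity, and pulling that average back through the M\"obius map gives a finite $\zeta\in D$ with the stated property.

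Applying the lemma separately to the $n_1$ zeros in $C_1$ and the $n_2$ zeros in $C_2$, suppose $z$ is a critical point of $P$ lying outside $C_1\cup C_2$. Then $z\ne z_k$ for every $k$, so $L(z)=0$, and we can write $0=\frac{n_1}{z-\zeta_1}+\frac{n_2}{z-\zeta_2}$ with $\zeta_1\in C_1$, $\zeta_2\in C_2$. Clearing denominators gives $n_1(z-\zeta_2)+n_2(z-\zeta_1)=0$, hence $z=\frac{n_1\zeta_2+n_2\zeta_1}{n}$. As $\zeta_1$ ranges over $C_1$ and $\zeta_2$ over $C_2$, this point ranges over the Minkowski-type combination $\tfrac{n_1}{n}C_2+\tfrac{n_2}{n}C_1$, which is precisely the disk with centre $\frac{n_1c_2+n_2c_1}{n}$ and radius $\frac{n_1r_2+n_2r_1}{n}$, namely $C_3$. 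This proves the first assertion: every critical point lies in $C_1\cup C_2\cup C_3$.

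For the counting statement, assuming $C_1,C_2,C_3$ pairwise disjoint, I would run a homotopy argument. Deform $P$ by sliding each zero in $C_1$ along the straight segment to $c_1$ and each zero in $C_2$ along the segment to $c_2$. Throughout the deformation all zeros stay in $C_1\cup C_2$ by convexity of the disks, so by the first part all $n-1$ critical points stay in the compact set $C_1\cup C_2\cup C_3$; since the critical points move continuously (as an unordered multiset in the coefficients of $P$) and the three disks are disjoint, the number of critical points counted with multiplicity in each disk is locally constant, hence constant along the whole deformation. At the endpoint $P(z)=c(z-c_1)^{n_1}(z-c_2)^{n_2}$, so $P'(z)=c(z-c_1)^{n_1-1}(z-c_2)^{n_2-1}\bigl(n_1(z-c_2)+n_2(z-c_1)\bigr)$, which has a zero of multiplicity $n_1-1$ at $c_1\in C_1$, one of multiplicity $n_2-1$ at $c_2\in C_2$, and a single simple zero at $\frac{n_1c_2+n_2c_1}{n}\in C_3$. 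Transporting these counts back along the deformation gives the claim, and the total $(n_1-1)+(n_2-1)+1=n-1=\deg P'$ checks out.

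The step I expect to be the main obstacle is making the continuity argument in the counting part fully rigorous: one must ensure no critical point touches the boundary of any of the three disks during the deformation and that multiplicity cannot leak between the disjoint disks. The cleanest fix is to invoke Rouch\'e's theorem (or the argument principle) on each bounding circle, after slightly enlarging the disks so that the first part of the theorem still applies to the enlarged configuration and forces all critical points strictly inside; continuity of the winding number of $P'$ around each circle then pins down the count. Everything else is routine M\"obius-geometry and the convexity of disks.
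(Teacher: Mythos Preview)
The paper does not actually prove this theorem; it is quoted, with a citation to Walsh, purely as motivation for the paper's own Theorem~\ref{walsh_general}, which is a different statement proved by a different (argument-principle) method. So there is no ``paper's proof'' to compare against here.

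That said, your argument is correct and is essentially the classical proof. The lemma you isolate---that for $w_1,\dots,w_m$ in a closed disk $D$ and $z\notin D$ one has $\sum_j \frac{1}{z-w_j}=\frac{m}{z-\zeta}$ for some $\zeta\in D$---is exactly Walsh's coincidence theorem, and your M\"obius-plus-convexity justification of it is the standard one. The algebra leading to $z=\frac{n_1\zeta_2+n_2\zeta_1}{n}\in C_3$ is clean. For the counting part, your homotopy to $(z-c_1)^{n_1}(z-c_2)^{n_2}$ is also the standard route; in fact you do not even need the Rouch\'e refinement you mention: since the three closed disks are pairwise disjoint and the first part confines all critical points to their union throughout the deformation, continuity of the unordered root set of $P'$ in the coefficients already prevents any critical point from jumping between components, so the counts are constant. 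The final check $(n_1-1)+(n_2-1)+1=n-1$ is fine.
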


Inspired by the above theorem of Walsh, we derive the following result. Consider a sequence of polynomials whose zeros are in well separated clusters (say for example separated unit disks). Further assume that the number of zeros in these sets grow proportionately. Under these assumptions we show that in a neighbourhood of each of these sets the number of critical points differ from the number of zeros by at most a constant number.

\begin{theorem}\label{walsh_general}
	Let $S_1,S_2,\dots, S_k$ be pairwise disjoint bounded convex sets in the complex plane. Assume that $\mbox{diam}(S_i)\leq 1$,  for $i=1,2,\dots,k$ and the distance of separation between any two $S_i,S_j$, for $i\neq j$, is at least $5k$. Define the sequence of polynomials $\{P_n\}_{n\geq 1}$ as $P_n(z):=\prod\limits_{j=1}^{k}\prod\limits_{i=1}^{n}(z-z^{(j)}_i)$, where  $z_i^{(j)} \in S_j$ for $j=1,\dots, k$ and $i=1,\dots, n$. Then, for  $\epsilon > \frac{3-\sqrt{5}}{2}$ we have a constant $c(k,\epsilon)$, such that the number of zeros of $P_n'(z)$ in the $S_i^{\epsilon}$ is at least $n-c(k,\epsilon)$ for any $i=1,\dots,k$. 
\end{theorem}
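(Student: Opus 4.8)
The plan is to run an argument–principle count on a contour enclosing $S_i^\epsilon$, combined with a Rouch\'e-type localization of the logarithmic derivative. Write
$L_n = P_n'/P_n = \sum_{j=1}^{k} g_j$, where $g_j(z) = \sum_{\ell=1}^{n}(z-z_\ell^{(j)})^{-1} = Q_j'(z)/Q_j(z)$ and $Q_j(z) = \prod_{\ell=1}^{n}(z-z_\ell^{(j)})$. Fix $i$. Since $P_nP_n'$ has finitely many zeros, pick $\epsilon' \in (\tfrac{3-\sqrt5}{2},\ \min(\epsilon,\tfrac{3+\sqrt5}{2}))$ such that $\partial S_i^{\epsilon'}$ meets no zero of $P_nP_n'$, and set $\Gamma = \partial S_i^{\epsilon'}$, a convex Jordan curve bounding the convex region $S_i^{\epsilon'}$. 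Because $\epsilon' < 5k$, the only zeros of $P_n$ inside $\Gamma$ are $z_1^{(i)},\dots,z_n^{(i)}$, so the argument principle gives that the number of zeros of $P_n'$ in $S_i^{\epsilon'}$ (with multiplicity) equals $n + \mathrm{wind}(L_n\circ\Gamma,0)$. Thus it suffices to prove $\mathrm{wind}(L_n\circ\Gamma,0) = -1$; then $P_n'$ has exactly $n-1$ zeros in $S_i^{\epsilon'} \subseteq S_i^\epsilon$, which proves the theorem, in fact with $c(k,\epsilon)=1$.

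The heart of the matter is the comparison on $\Gamma$ of $g_i$ with the remainder $h_i := \sum_{j\ne i} g_j$. For a lower bound on $|g_i|$: given $z\in\Gamma$, let $p\in S_i$ realize $|z-p|=\epsilon'$; after a rigid motion taking $p$ to $0$ and $z$ to $\epsilon'$, convexity of $S_i$ forces $\mathrm{Re}\,z_\ell^{(i)}\le 0$, hence $\mathrm{Re}(z-z_\ell^{(i)})\ge\epsilon'$ while $|z-z_\ell^{(i)}|\le\epsilon'+\mathrm{diam}(S_i)\le\epsilon'+1$; since $\mathrm{Re}(1/w)=\mathrm{Re}(w)/|w|^2$ this gives $|g_i(z)|\ge\mathrm{Re}\,g_i(z)\ge n\epsilon'/(\epsilon'+1)^2$. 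For an upper bound on $|h_i|$: for $z\in\Gamma$ and $j\ne i$ one has $\mathrm{dist}(z,S_j)\ge 5k-\epsilon'$, so $|h_i(z)|\le (k-1)n/(5k-\epsilon')$. These force $|h_i|<|g_i|$ on $\Gamma$ provided $(k-1)(\epsilon'+1)^2<\epsilon'(5k-\epsilon')$, i.e. $k\big(5\epsilon'-(\epsilon'+1)^2\big)+(2\epsilon'+1)>0$; this holds for every $k\ge1$ once $5\epsilon'-(\epsilon'+1)^2\ge0$, i.e. $(\epsilon')^2-3\epsilon'+1\le0$, and $\tfrac{3-\sqrt5}{2}$ is precisely the smaller root of $x^2-3x+1$. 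This is exactly where the separation hypothesis ``$\ge 5k$'' and the threshold on $\epsilon$ enter, and it is the only step I expect to require genuine care; identifying the right lower bound for $|g_i|$ — by reducing to the half-plane picture so that the $n$ summands cannot cancel — is the key insight.

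Given $|h_i|<|g_i|$ on $\Gamma$, Rouch\'e's theorem (in winding-number form, applied to $L_n=g_i+h_i$) yields $\mathrm{wind}(L_n\circ\Gamma,0)=\mathrm{wind}(g_i\circ\Gamma,0)$. Finally $g_i=Q_i'/Q_i$, so $\mathrm{wind}(g_i\circ\Gamma,0)$ equals the number of zeros of $Q_i'$ in $S_i^{\epsilon'}$ minus the number of zeros of $Q_i$ in $S_i^{\epsilon'}$; by Gauss--Lucas (Theorem~\ref{guass-lucas}) all $n-1$ zeros of $Q_i'$ lie in the convex hull of $\{z_\ell^{(i)}\}\subseteq S_i\subset S_i^{\epsilon'}$, and all $n$ zeros of $Q_i$ lie in $S_i$, so this number is $(n-1)-n=-1$, as needed. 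Combined with the first paragraph this finishes the proof; the remaining points (existence of a good radius $\epsilon'$, smoothness of $\Gamma$, and bookkeeping of multiplicities when the $z_\ell^{(j)}$ are not distinct) are routine.
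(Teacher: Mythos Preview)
Your proof is correct and in fact sharper than the paper's, yielding $c(k,\epsilon)=1$ rather than a constant that grows with $k$. The two proofs share the same skeleton: both work with the logarithmic derivative $L_n=P_n'/P_n=\sum_j g_j$, both apply the argument principle on $\partial S_i^{\epsilon'}$, and both obtain the key lower bound $|g_i(z)|\ge n\epsilon'/(\epsilon'+1)^2$ via the separating-hyperplane (half-plane) argument, together with the upper bound $|h_i(z)|\le (k-1)n/(5k-\epsilon')$; this is exactly where the threshold $\epsilon'^2-3\epsilon'+1\le 0$ and the separation $5k$ enter in both arguments. The divergence is in how the winding number is extracted. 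The paper bounds $|L_n'/L_n|$ pointwise on the contour and integrates, which only gives $|N_Z(L_n,\Gamma)-n|\le c(k,\epsilon)$ for an explicit but non-sharp constant. You instead observe that $|h_i|<|g_i|$ on $\Gamma$ is precisely the Rouch\'e hypothesis, reducing the winding-number computation to that of $g_i=Q_i'/Q_i$ alone, which Gauss--Lucas pins down as exactly $-1$. Your route is cleaner and recovers the exact count $n-1$ of critical points in each cluster, in line with what Walsh's two-disc theorem would suggest; the paper's integral bound is more robust in spirit but loses this precision.
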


\begin{proof}

We will estimate the number of critical points of $P_n(z)$ in  $S_1^\epsilon$ the $\epsilon$ neighbourhood of $S_1$. Let the diameter of the sets $S_i$ be at most $d$ for any $i \in \{1,2,\dots,k\}$. Assume that the separation between any two $S_i,S_j$ is at least $d_s$, where $d_s>0$. In the course of this proof we will substitute the values $d=1$ and $d_s=5k$ as given in the statement of the theorem.  For $\epsilon$ small enough, we know that there are $n$ zeros of $P_n(z)$ in $S_1^{\epsilon}$. Argument principle computes the difference between zeros and poles of a meromorphic function in a domain by evaluating a certain integral on the boundary of the domain. We will use argument principle to estimate the critical points of $P_n(z)$ in $S_1^{\epsilon}$. A version of argument principle is stated below.

Let $f: U \rightarrow\mathbb{C}$ be a meromorphic function on a simply connected domain $U$ and let $C$ be a rectifiable simple closed curve in $U$. Assume that $f$ does not vanish on $C$. Then 
\begin{equation}
\frac{1}{2\pi i}\oint_{C}\frac{f'(z)}{f(z)}dz = N_Z\left(f,C\right)-N_P\left(f,C\right)\label{argumentprinciple}
\end{equation}  
where $N_Z\left(f,C\right)$ and $N_P\left(f,C\right)$ are the number of zeros and poles of $f$ enclosed by the curve $C$. 

Define $L_n(z):=\frac{P_n'(z)}{P_n(z)}=\sum\limits_{j=1}^{k}\sum\limits_{s=1}^{n}\frac{1}{z-z_s^{(j)}}$ and notice that the zeros and poles of $L_n(z)$ are zeros of $P_n'(z)$ and $P_n(z)$ respectively. We shall apply the argument principle for the function $L_n(z)$ for the boundary curve $\gamma_1^\epsilon$ obtained from $\partial S_1^{\epsilon}$. Assume that there are no zeros of $P_n'(z)$ on the curve $\gamma_1^\epsilon$. 
Hence applying the formula \eqref{argumentprinciple} to $L_n(z)$ we get,

\begin{align}
|N_Z\left(L_n,\gamma_1^\epsilon\right)-N_P\left(L_n,\gamma_1^\epsilon\right)| & =  \bigg|\frac{1}{2\pi}\oint\limits_{\gamma_1^\epsilon}\frac{L_n'(z)}{L_n(z)}dz\bigg| & \text{(or)} \\
|N_Z\left(P_n,\gamma_1^\epsilon\right)-n| & \leq \frac{1}{2\pi}\oint\limits_{\gamma_1^\epsilon}\bigg|\frac{L_n'(z)}{L_n(z)}\bigg||dz| \label{lineintegral}
\end{align}
For the above integrand we will give an upper bound for the numerator and a lower bound for the denominator on the curve $\gamma_1^\epsilon$. Because $|z-z_s^{(j)}|\geq\epsilon$ and from the triangle inequality, for $z$ on the curve $\gamma_1^\epsilon$ we get,
\begin{equation}
|L_n'(z)|=\Biggl|-\sum\limits_{j=1}^{k}\sum\limits_{s=1}^{n}\frac{1}{\left(z-z^{(j)}_s\right)^2}\Biggr| \leq \sum\limits_{j=1}^{k}\sum\limits_{s=1}^{n}\frac{1}{\bigl|z-z^{(j)}_s\bigr|^2} \leq \frac{kn}{\epsilon^2}.\label{numerator}
\end{equation}
Similarly for $L_n(z)$ using triangle inequality we get,
\begin{align}
|L_n(z)|=\left|\sum\limits_{j=1}^{k}\sum\limits_{s=1}^{n}\frac{1}{z-z^{(j)}_s}\right| & \geq \left|\sum\limits_{s=1}^{n}\frac{1}{z-z^{(1)}_s}\right|- \left|\sum\limits_{j=2}^{k}\sum\limits_{s=1}^{n}\frac{1}{z-z^{(j)}_s}\right|\label{eqn:lemma0:1} \\
\end{align}
The first term in the right most expression in \eqref{eqn:lemma0:1} is invariant under multiplication by $e^{i\theta}$. Therefore for any $\theta \in [0,2\pi)$ we have,

\begin{align}
|L_n(z)| &\geq\left|\sum\limits_{s=1}^{n}\frac{e^{i\theta}}{z-z^{(1)}_s}\right|-\left|\sum\limits_{j=2}^{k}\sum\limits_{s=1}^{n}\frac{1}{z-z^{(j)}_s}\right|,\\
&\geq \left|\sum\limits_{s=1}^{n}\Im\left(\frac{e^{i\theta}}{z-z^{(1)}_s}\right)\right|- \left|\sum\limits_{j=2}^{k}\sum\limits_{s=1}^{n}\frac{1}{z-z^{(j)}_s}\right|, \\ & \geq \frac{n\epsilon}{(d+\epsilon)^2}-\frac{(k-1)n}{d_s-\epsilon}.
\end{align}
because $S_1^\epsilon$ is a convex set, there is a line passing through $z\in \partial S_1^\epsilon$ (from separating hyperplane theorem) such that all the points $z_s^{(1)}$ lie on the same side of this line. Let $\theta$  be the angle made by this line with real axis, then all the terms in the $\sum\limits_{s=1}^{n}\Im{\frac{e^{i\theta}\left(\overline z- \overline z_s^{(1)}\right)}{\left|z-z_s^{(1)}\right|^2}}$ have the same sign and the absolute value of the numerators is atleast $\epsilon$ and denominators with at most $(d+\epsilon)^2$. Similarly $\bigg|\sum\limits_{j =2}^{k}\sum\limits_{s=1}^{n}\frac{1}{z-z_s^{(j)}}\bigg| \leq \sum\limits_{j=2}^{k}\sum\limits_{s=1}^{n}\frac{1}{|z-z_s^{(j)}|}$, and for $j\neq 1$ the denominator in the previous expression $|z-z_s^{(j)}|$ is at least $d_s-\epsilon$, because $|z-z_s^{(j)}| \geq d(S_1^\epsilon,S_j)\geq d(S_1,S_j)-\epsilon \geq d_s-\epsilon$. For $z \in \partial S_1^\epsilon$, we obtain
\begin{equation}
\left|L_n(z)\right| \geq \frac{n\epsilon}{(d+\epsilon)^2}-\frac{(k-1)n}{d_s-\epsilon}. \label{denominator}
\end{equation}

By substituting $d=1$, for the right hand side of \eqref{denominator} to be positive, we need $d_s$ to satisfy 
\begin{equation}
 d_s>\epsilon+\frac{(k-1)(1+\epsilon)^2}{\epsilon}.\label{eqn:d_sandepsilon}
\end{equation}
For any choice of $d_s\geq 5k-2$ and $\epsilon \in [\frac{3-\sqrt{5}}{2},\frac{3+\sqrt{5}}{2}]$, the inequality \eqref{eqn:d_sandepsilon} is satisfied. For these choices of variables we have,
\begin{align}
|N_Z\left(P_n,\gamma_1^\epsilon\right)-n| & \leq \frac{1}{2\pi}\oint\limits_{\gamma_1^\epsilon}\bigg|\frac{L_n'(z)}{L_n(z)}\bigg||dz|,\\
& \leq
\frac{1}{2\pi}\oint\limits_{\gamma_1^\epsilon}\dfrac{\frac{kn}{\epsilon^2}}{\frac{n\epsilon}{(1+\epsilon)^2}-\frac{(k-1)n}{d_s-\epsilon}}|dz|,\\
&\leq \frac{1+2\epsilon}{2\epsilon^2}\dfrac{k}{\frac{\epsilon}{(1+\epsilon)^2}-\frac{(k-1)}{d_s-\epsilon}} =: c(k,\epsilon,d_s). \label{eqn:integral}
\end{align}

The inequality \eqref{eqn:integral} is obtained from the fact  $p\leq \pi d$, where $p$ is the perimeter of the convex set and $d$ is the diameter of the convex set. Substituting $d_s=5k$ we obtain that $|N_Z(L_n,\gamma_1^\epsilon)-N_P(L_n,\gamma_1^\epsilon)|\leq c(k,\epsilon)$. By the choice of $\epsilon$, we have proved the Theorem for $\epsilon \in [\frac{3-\sqrt{5}}{2},\frac{3+\sqrt{5}}{2}]$. For $\epsilon>\frac{3+\sqrt{5}}{2}$, $S_1^\epsilon$ contains $\frac{3+\sqrt{5}}{2}$-neighbourhood of $S_1$, hence number of critical points in $S_1^\epsilon$ is at least $n-c(k,\frac{3+\sqrt{5}}{2})$. Choosing $c(k,\epsilon)=c(k,\frac{3+\sqrt{5}}{2})$, the Theorem is proved for $\epsilon>\frac{3+\sqrt{5}}{2}$.
\end{proof}

\begin{remark}
In Theorem \ref{walsh_general} we have assumed all $S_i$ have equal number of zeros of $P_n$. Instead we may assume that the number of zeros $P_n$ in $S_i$ be  $c_in$ for constants $c_1,c_2,\dots,c_k$ and obtain a similar result. The same ideas in the above proof can be used to prove this result.
\end{remark}

It was raised by Pemantle and Rivin whether it is true that if the limiting measure of zeros of the sequence of polynomials converging to a probability measure $\mu$, then the limiting measure of critical points also converge to $\mu$. We will see in the forthcoming example that this is indeed false. For convenience we will introduce the following notation.  For any polynomial $P$, denote $Z(P)$ to be the multi-set of zeros of $P$ and $\text{$\mathscr{M}$}(P)$ to be the uniform probability measure on $Z(P)$.

Here we will construct sequence of polynomials for which the limiting measure of zeros and critical points do not agree. The most commonly quoted~\cite{pemantle} sequence of polynomials in this regard is $P_n(z)=z^n-1$. In this case the limiting zero measure is the uniform probability measure on $S^1$ and the limiting critical point measure is the Dirac measure at origin. We generalize the above stated example and construct new set of examples for which the limiting measures of zeros and critical points are different.

\begin{eg}
Observe that if a polynomial has all zeros real, then all its critical points have to be real and are interlaced between the zeros of the polynomial. Consider the polynomial $P_n(z)=(z-a_1^n)(z-a_2^n)\dots(z-a_k^n)$, where $a_1,a_2,\dots,a_k$ are real numbers such that $0 < a_1 < a_2 < \dots < a_k$. Define the sequence of polynomials to be $Q_n(z)=P_n(z^n)$, then $Q_n'(z)=nz^{n-1}P_n'(z^n)$. The zero set of $Q_n$ is \[Z(Q_n)=\bigcup\limits_{j=1}^{k}\bigcup\limits_{\ell=1}^{n}\{a_je^{ 2\pi i\frac{\ell}{n}}\}.\] Where as the zero set of $Q_n'$ is \[Z(Q_n')=\left(\bigcup\limits_{j=1}^{k-1}\bigcup\limits_{\ell=1}^{n} \{b_{j,n}^{\frac{1}{n}}e^{ 2\pi i\frac{\ell}{n}}\}\right)\bigcup\{0,0,\dots,0\},\] where $b_{1,n},b_{2,n},\dots,b_{k-1,n}$ are the zeros of the polynomial $P_n'(z)$. The probability measure $\text{$\mathscr{M}$}(Q_n')$ has mass $\frac{n-1}{kn-1}$ at $0$, hence its limiting measure will have mass $\frac{1}{k}$ at $0$. On the other hand the probability measure $\text{$\mathscr{M}$}(Q_n)$ is supported on $\bigcup\limits_{j=1}^{k}a_jS^1$. Hence the limiting measures do not agree.
\end{eg}
\begin{eg}
For the second class of examples choose a sequence of complex numbers all containing in a disk of radius $r$ around 0. Let the sequence be $\{a_n\}_{n\geq1}$ and $|a_n|\leq r$. Make a sequence of polynomials using the terms of this sequence as its zeros. Define $P_n(z)=(z-a_1)(z-a_2)\dots(z-a_n)$. Using these define the polynomials $Q_n(z)=\int_{0}^{z}P_n(w)dw+(2r+d)^{n+1}$, where $d>0$. Notice that $Q_n'(z)=P_n(z)$. We will show that the polynomial $Q_n(z)$ does not vanish in the disk $\textbf{$\mathbb{D}$}_r$. For this observe

\begin{align}
\min\limits_{z \in \text{$\mathbb{D}$}_r}|Q_n(z)| & \geq |(2r+d)^n-\max\limits_{z \in \text{$\mathbb{D}$}_r}|\int_{0}^{z}P_n(w)dw||,\\ 
& \geq |(2r+d)^{n+1}-r\max\limits_{z\in \text{$\mathbb{D}$}_r}|P_n(z)||,\\
& =|(2r+d)^{n+1}-r(2r)^n| >0.
\end{align}
Therefore $Q_n(z)$ does not vanish in the disk $\text{$\mathbb{D}$}_r$. Hence for all large $n$ the zeros of $Q_n(z)$ are outside the disk $\text{$\mathbb{D}$}_{2r+d}$. Assuming that $Q_n(z)$ has a limiting zero measure, the support of the limiting zero measure of $Q_n(z)$ is disjoint from the support of the limiting zero measure of $P_n(z)$. 
\end{eg}
\begin{eg}
We will illustrate a more concrete example based on the above technique. Choose a polynomial $P$, whose zeros are in $\text{$\mathbb{D}$}_r$, where $r<1$. Define $Q_n(z)=P^n(z)-1$, then $Q_n'=nP^{n-1}(z)P'(z)$. If $z$ is a zero of $Q_n(z)$, then it satisfies $P^n(z)=1$, or $|P(z)|=1$. Therefore the limiting zero measure of $Q_n(z)$ is supported on the boundary of the lemniscate $\{z:|P(z)|\leq1\}$ of the polynomial $P$. The limiting zero measure for the sequence $\{Q_n\}_{n \geq 1}$ exists because $Q_n$ is the $nk$-th Chebyshev polynomial of the lemniscate of $P$. Hence the limiting zero measure is the equilibrium measure for the domain $\{z:|P(z)|\leq1\}$. For a detailed discussion on the relation between Chebyshev polynomials and equilibrium measures, the reader can refer Chapter 5 in \cite{ransford}. On the other side, if $z_1,z_2,\dots,z_k$ are the roots of the polynomial $P$, then the limiting zero distribution of $Q_n'$ will be $\frac{1}{k}\sum\limits_{i=1}^{k}\delta_{z_i}$. Hence the limiting measures of zeros and critical points of the given sequence of polynomials do not agree. 
\end{eg}

In this context we quote the question posed by Pemantle and Rivin in \cite{pemantle}. 

\begin{question}\label{Pemantle_question}
When are the zeros of $P_n'$ stochastically similar to the zeros of $P_n$?
\end{question}


\section{Critical points of random polynomials.}
To tackle the Question \ref{Pemantle_question}, $P_n$s can be considered to be random. The study of critical points of random polynomials through random zeros was initiated by Pemantle and Rivin in \cite{pemantle}. They considered a sequence of random polynomials whose zeros are i.i.d. with law $\mu$ having finite 1-energy and proved that the empirical law of critical points converge weakly to the same probability measure $\mu$. A similar result for  probability measures supporting on $S^1$ was proved by Subramanian \cite{sneha}. Kabluchko in \cite{kabluchko} proved the result without any assumption on $\mu$.

Before stating the above mentioned results we recall the modes of convergence for random measures.
\begin{definition}\label{modes of convergence}
 Let $\mbox{\textbf{M}}(\text{$\mathbb{C}$})$ be the set of probability measures on the complex plane, equipped with \textit{weak topology}. Let $\{\mu_n\}_{n\geq1}$ be a sequence in $\mbox{\textbf{M}}(\text{$\mathbb{C}$})$ and   $\mu \in \mbox{\textbf{M}}(\text{$\mathbb{C}$})$ we say,
\begin{itemize}
\item $\mu_n \xrightarrow{w} \mu$ in probability if $\lim\limits_{n\rightarrow\infty}\Pr(\mu_n \in N_\mu)=1$ for any  neighbourhood $N_\mu$ of $\mu$,

\item $\mu_n \xrightarrow{w} \mu$ almost surely if $\Pr(\lim\limits_{n\rightarrow\infty}\mu_n \in N_\mu)=1$ for any neighbourhood $N_\mu$ of $\mu$.
\end{itemize}
\end{definition}

 We now give precise statements of the results in  \cite{kabluchko} and \cite{pemantle}.

\begin{definition}
	Define the \textit{p-energy of $\mu$} to be
	
\[
	\mathcal{E}_\text{$p$}(\mu):=\left(\int\limits_{\mathbb{C}}\int\limits_{\mathbb{C}}\text{$\frac{1}{|z-w|^p}d\mu(z)d\mu(w)$}\right)^\textbf{$\frac{1}{p}$}
\]
\end{definition} 
\begin{theorem}[Pemantle-Rivin  ~\cite{pemantle}]\label{pemantle-rivin}Let $X_1,X_2,\dots $ be a sequence of i.i.d random variables from the probability measure $\mu$. Assume that $\mu$ has finite 1-energy. Let $P_n(z)=(z-X_1)(z-X_2)\dots(z-X_n)$, then the critical points measure $\text{$\mathscr{M}$}(P_n')\xrightarrow{w}\mu$   almost surely. 	
\end{theorem}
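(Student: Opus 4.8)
The plan is to show that the critical-point measure $\nu_n := \mathscr{M}(P_n')$ is, asymptotically, only a small perturbation of the zero measure $\mu_n := \mathscr{M}(P_n) = \frac1n\sum_{k=1}^n \delta_{X_k}$, which already converges to $\mu$. The first step is the zeros: by the almost sure weak convergence of empirical measures of i.i.d.\ samples, $\mu_n \xrightarrow{w}\mu$ a.s., and the hypothesis $\mathcal E_1(\mu)<\infty$ is used to ensure that the Cauchy transform $C_\mu(w) := \int_{\C}(w-v)^{-1}\,d\mu(v)$ is finite for $\mu$-a.e.\ $w$ and that the set $\{C_\mu = 0\}$ is $\mu$-null.

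The second, and main, step is a pairing between zeros and critical points, the latter being the zeros of $L_n(z) := P_n'(z)/P_n(z) = \sum_{k=1}^n (z-X_k)^{-1}$. Fix $\delta>0$ and call an index $j$ \emph{good} if $|C_\mu(X_j)|>\delta$; by the first step the fraction of good indices is at least $1-\mu(\{|C_\mu|\le\delta\})-o(1)$ a.s. For a good $j$, conditioning on $X_j$ and applying the strong law to the remaining i.i.d.\ terms gives $\sum_{k\ne j}(X_j-X_k)^{-1} = n\,C_\mu(X_j)+o(n)$; writing $L_n(z) = (z-X_j)^{-1}+\sum_{k\ne j}(z-X_k)^{-1}$ and solving $L_n(z)=0$ near $X_j$ then yields a critical point $c_j$ with $|c_j-X_j| = (1+o(1))\,|n\,C_\mu(X_j)|^{-1}\le (1+o(1))(n\delta)^{-1}\to 0$. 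Matching each good $X_j$ with $c_j$ — a bounded-error count in the spirit of Theorem~\ref{walsh_general} absorbing the effect of tight clusters of nearby zeros — shows that $\nu_n$ and $\mu_n$ agree weakly in the limit up to total mass $\mu(\{|C_\mu|\le\delta\})$, the remaining critical points being confined to $\mathrm{conv}(X_1,\dots,X_n)$ by Gauss--Lucas (Theorem~\ref{guass-lucas}); when $\mu$ is not compactly supported a preliminary truncation of the small amount of mass far from the origin supplies the needed tightness. Letting $n\to\infty$ and then $\delta\downarrow 0$, so that $\mu(\{|C_\mu|\le\delta\})\downarrow\mu(\{C_\mu=0\})=0$, forces $\nu_n$ and $\mu_n$ to have the same weak limit, whence $\nu_n\xrightarrow{w}\mu$ a.s. The same content may instead be packaged through logarithmic potentials: since $\frac1n\log|P_n'| = \frac1n\log|P_n| + \frac1n\log|L_n|$ and $\frac1n\log|P_n|\to U_\mu := \int_{\C}\log|\cdot-w|\,d\mu(w)$ in $L^1_{\mathrm{loc}}$ by the strong law, everything reduces to $\frac1n\log|L_n|\to0$ in $L^1_{\mathrm{loc}}$; the bound $|L_n|\le\sum_k|z-X_k|^{-1}$ renders the positive part $\int_K(\frac1n\log|L_n|)^+\,dz\to 0$ elementary, and the negative part $\int_K\log^-|L_n(z)|\,dz = o(n)$ is the pairing estimate in disguise.

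The step I expect to be the main obstacle is the uniformity hidden in the pairing: the $o(n)$ error produced by the conditional strong law must be controlled uniformly enough over the good indices $j$ to remain negligible after summation, and this is exactly where $\mathcal E_1(\mu)<\infty$ earns its keep — it bounds $\frac1{n^2}\sum_{j\ne k}|X_j-X_k|^{-1}$ and hence the potential $\sum_{k\ne j}(X_j-X_k)^{-1}$ exerted at a typical zero. One must also handle the zeros at which $|C_\mu(X_j)|$ is positive but tiny, verify that $\{C_\mu=0\}$ is $\mu$-null under the finite-energy assumption, and control the small family of ``escaped'' critical points; modulo these points, which form the technical heart of the argument, the remainder is bookkeeping.
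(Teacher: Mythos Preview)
The paper does not prove Theorem~\ref{pemantle-rivin}: it is stated as a cited result of Pemantle and Rivin~\cite{pemantle}, with no proof supplied in the thesis itself. So there is no ``paper's own proof'' to compare against. What the paper \emph{does} prove are Theorems~\ref{thm1} and~\ref{thm2}, and there the method is the second one you sketch: the logarithmic-potential route via $\frac1n\log|L_n|\to 0$, established through the three conditions \eqref{A1}, \eqref{A2}, \eqref{A3} and Lemma~\ref{lem:tao_vu} of Tao--Vu, with the lower bound \eqref{A2} coming from Kolmogorov--Rogozin anti-concentration rather than from any pairing of zeros with critical points.

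Your primary sketch --- the pairing argument via the Cauchy transform $C_\mu$ --- is indeed the line Pemantle and Rivin take in~\cite{pemantle}, and your identification of the crux (uniformity of the conditional SLLN over the good indices, and the $\mu$-nullity of $\{C_\mu=0\}$) is accurate. As written it is a plausible outline rather than a proof: you have not actually shown that $\{C_\mu=0\}$ is $\mu$-null from $\mathcal E_1(\mu)<\infty$, nor have you made precise the ``bounded-error count in the spirit of Theorem~\ref{walsh_general}'' that prevents two good zeros from claiming the same critical point. These are exactly the places where the original argument does real work, so calling them ``bookkeeping'' undersells them.
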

One limitation of the above result is that it is not applicable to probability measures that are supported on 1-dimensional subsets of the complex plane. But the result can be easily verified for probability measures supported on real line. By Rolle's theorem the critical points are interlaced between the roots of the polynomial. Hence the L\'{e}vy distance between the zeros measure $\text{$\mathscr{M}$}(P_n)$ and critical points measure $\text{$\mathscr{M}$}(P_n')$ is at most $\frac{1}{n}$. On the other side the zeros measure $\text{$\mathscr{M}$}(P_n)$ has a limiting measure $\mu$ which is the probability measure from which the random variables are drawn. Combining the previous two observations the result follows. Pemantle and Rivin in \cite{pemantle} conjectured that the statement of Theorem \ref{pemantle-rivin} is true without any assumptions on $\mu$. 
 
\begin{conjecture}[Pemantle-Rivin \cite{pemantle}]\label{pemantle_conjecture}
Let $X_1,X_2,\dots$ be i.i.d. random variables distributed according to a probability measure $\mu$ and $P_n(z):=(z-X_1)(z-X_2)\dots(z-X_n)$. Then $\text{$\mathscr{M}$}(P_n')\xrightarrow{w}\mu$  almost surely.
\end{conjecture}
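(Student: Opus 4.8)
The plan is to pass to logarithmic potentials and exploit the factorisation $P_n'=P_n\cdot L_n$, where $L_n(z):=P_n'(z)/P_n(z)=\sum_{k=1}^n(z-X_k)^{-1}$ is precisely the potential function appearing in the proof of Gauss--Lucas above. By Varadarajan's theorem the empirical measure of the roots already satisfies $\mathscr{M}(P_n)\xrightarrow{w}\mu$ almost surely with no moment hypothesis, and since $\mathscr{M}(P_n')$ and $\mathscr{M}(P_n)$ are probability measures and a vague limit of probability measures that happens itself to be a probability measure is automatically a weak limit, it suffices to prove that $\int\phi\,d\mathscr{M}(P_n')-\int\phi\,d\mathscr{M}(P_n)\to 0$ for every $\phi\in C_c^\infty(\mathbb{C})$. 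Using $\Delta\log|z-a|=2\pi\delta_a$ and integrating by parts,
\[
\int\phi\,d\mathscr{M}(P_n')-\int\phi\,d\mathscr{M}(P_n)=\frac1{2\pi n}\int_{\mathbb{C}}(\Delta\phi)(z)\log|L_n(z)|\,dz+O(1/n),
\]
so the whole problem collapses to one statement: $\tfrac1n\log|L_n|\to 0$ in $L^1_{\mathrm{loc}}(\mathbb{C})$, almost surely.

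For the upper bound this is easy. From $|L_n(z)|\le\sum_k|z-X_k|^{-1}$ one gets $\tfrac1n\log^+|L_n(z)|\le\tfrac{\log n}{n}+\tfrac1n\bigl(\tfrac1n\sum_k|z-X_k|^{-1}\bigr)$, and $\tfrac1n\sum_k|z-X_k|^{-1}$ converges --- a.e.\ in $z$ and in $L^1$ on compact sets, by the strong law of large numbers together with the local integrability of $w\mapsto|z-w|^{-1}$ --- to the function $\int|z-w|^{-1}\,d\mu(w)$, which is finite for a.e.\ $z$; hence the right-hand side tends to $0$ in $L^1_{\mathrm{loc}}$.

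The lower bound, $\tfrac1n\log^-|L_n|\to 0$ in $L^1_{\mathrm{loc}}$, is the heart of the matter: it asserts that for a.e.\ $z$ the random quantity $L_n(z)$ is, almost surely, never exponentially small. I would fix a $z$ for which $\Phi_\mu(z):=\int(z-w)^{-1}\,d\mu(w)$ is finite and $\int|z-w|^{-2}\,d\mu(w)<\infty$ (a.e.\ $z$ qualifies) and argue by cases. If $\Phi_\mu(z)\ne 0$, the strong law gives $\tfrac1n L_n(z)\to\Phi_\mu(z)\ne 0$, whence $\tfrac1n\log|L_n(z)|\to 0$ at once. The delicate case is $\Phi_\mu(z)=0$, which cannot be brushed aside, since $\{\Phi_\mu=0\}$ may have positive Lebesgue measure --- e.g.\ for $\mu$ uniform on an annulus, $\Phi_\mu$ vanishes identically on the bounded complementary component. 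There the summands $\varphi_k:=(z-X_k)^{-1}$ are i.i.d., centred, with finite nonzero variance, so $L_n(z)$ is typically of order $\sqrt n$; I would prove $\liminf_n\tfrac1n\log|L_n(z)|\ge 0$ almost surely by a Borel--Cantelli argument, bounding $\mathbb{P}(|L_n(z)|\le e^{-\varepsilon n})$ through an anti-concentration estimate that uses the independence of the fresh increment $\varphi_{n+1}$ from $L_n(z)$ --- adding $\varphi_{n+1}$ ``resets'' the partial sum, so it cannot stay exponentially small for every large $n$ --- then transfer from ``for fixed $z$, almost surely'' to ``almost surely, for a.e.\ $z$'' by Fubini, and finally use an equi-integrability bound on compact sets to upgrade this pointwise convergence to $L^1_{\mathrm{loc}}$ convergence.

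I expect this anti-concentration step to be the principal obstacle, and the point at which the full generality of $\mu$ is felt: the estimate must be uniform in the (possibly small) current value $L_n(z)$, it must survive atomic $\mu$ (where $L_n(z)$ can be small through exact near-cancellations among finitely many attained values), and it must survive heavy-tailed $\mu$ (where $X_k$ may lie far from $z$, so no moment of $X_1$ is available for controlling how small $\varphi_{n+1}$ can be). Once $\tfrac1n\log|L_n|\to 0$ in $L^1_{\mathrm{loc}}$ is in hand, the displayed reduction finishes the proof: $\tfrac1n\int(\Delta\phi)\log|L_n|\to 0$ for every $\phi\in C_c^\infty$, so $\mathscr{M}(P_n')\to\mu$ vaguely and hence, being probability measures with probability-measure limit, weakly, almost surely.
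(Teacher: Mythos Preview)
The paper does not prove this statement: it is recorded there as an open conjecture, with Kabluchko's theorem giving only convergence \emph{in probability}. So there is no ``paper's own proof'' to compare against, and your task is genuinely to settle an open problem (at least open at the time of the thesis). Your outline follows exactly the reduction Kabluchko and the paper use for the in-probability result --- pass to $\frac{1}{n}\log|L_n|$ and control its positive and negative parts --- but the upgrade to almost-sure convergence has two concrete gaps.

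First, your claim that ``a.e.\ $z$ qualifies'' for $\int|z-w|^{-2}\,d\mu(w)<\infty$ is false. The function $w\mapsto|z-w|^{-2}$ is \emph{not} locally integrable on $\mathbb{C}$, so the Fubini argument that works for the first moment fails here. Concretely, if $\mu$ has a density bounded below on an open set $U$, then $\int_{|w-z|<r}|z-w|^{-2}\,d\mu(w)\gtrsim\int_0^r\rho^{-1}\,d\rho=\infty$ for every $z\in U$; thus your finite-variance hypothesis can fail on a set of positive Lebesgue measure, and the CLT picture of $L_n(z)$ being ``typically of order $\sqrt{n}$'' is unavailable precisely on the set $\{\Phi_\mu=0\}$ you need it for.

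Second, and more fundamentally, the anti-concentration step you defer is the entire difficulty. The known tool here is the Kolmogorov--Rogozin inequality (this is what the paper uses in Lemma~\ref{kolmogorov-rogozin}), and it yields only
\[
\Pr\bigl(|L_n(z)|\le e^{-n\epsilon}\bigr)\le \frac{C}{\sqrt{n}},
\]
which is not summable. Your ``fresh increment resets the partial sum'' heuristic does not improve this: conditioning on $L_n(z)$ and adding one more term still gives at best an $O(1)$ concentration bound per step, and Borel--Cantelli cannot be closed. This non-summability is exactly why Kabluchko's argument, and the paper's Theorems~\ref{thm1} and~\ref{thm2}, stop at convergence in probability. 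Any proof of the almost-sure statement must bring in a genuinely new idea --- for instance, exploiting dependence of $L_n(z)$ across different $z$, or a subsequence-plus-interpolation argument --- rather than a pointwise Borel--Cantelli at each fixed $z$.
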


 Kabluchko proved the conjecture of Pemantle and Rivin in a weak form.
\begin{theorem}[Kabluchko~\cite{kabluchko}]\label{kabluchko}
	 Let $X_1, X_2 ,\dots$ be i.i.d. random variables distributed according to a probability measure  $\mu$ and $P_n(z):=(z-X_1)(z-X_2)\dots(z-X_n)$. Then $\text{$\mathscr{M}$}(P_n')\xrightarrow{w}\mu$  in probability.
\end{theorem}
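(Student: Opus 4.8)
Our plan is to argue by logarithmic potential theory, in the spirit of the deterministic Theorem~\ref{walsh_general}. Write $\mu_n:=\mathscr{M}(P_n)=\frac1n\sum_{i=1}^n\delta_{X_i}$ for the empirical measure of the roots and $\nu_n:=\mathscr{M}(P_n')$ for that of the critical points; since $X_1,X_2,\dots$ are i.i.d.\ with law $\mu$, the strong law of large numbers gives $\mu_n\xrightarrow{w}\mu$ almost surely. For a probability measure $\rho$ on $\mathbb{C}$ let $U_\rho(z):=\int_{\mathbb{C}}\log|z-w|\,d\rho(w)$ be its logarithmic potential, so that $\frac1{2\pi}\Delta U_\rho=\rho$ in the sense of distributions. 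Standard considerations (the Gauss--Lucas confinement of Theorem~\ref{guass-lucas} together with the tightness of $\{\mu_n\}$) show that $\{\nu_n\}$ is tight; hence it suffices to prove that $U_{\nu_n}\to U_\mu$ in $L^1_{\mathrm{loc}}(\mathbb{C})$ in probability, for then testing $\frac1{2\pi}\Delta$ against an arbitrary $\varphi\in C_c^\infty(\mathbb{C})$ yields $\langle\nu_n,\varphi\rangle\to\langle\mu,\varphi\rangle$ in probability, which is the assertion.

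The key identity is that the logarithmic derivative $P_n'(z)/P_n(z)=\sum_{i=1}^n\frac1{z-X_i}$ is exactly $n$ times the Cauchy transform $C_{\mu_n}(z):=\int\frac{d\mu_n(w)}{z-w}$ of $\mu_n$. Comparing leading coefficients ($P_n'$ has degree $n-1$ and leading coefficient $n$) one gets, for a.e.\ $z$,
\[
U_{\nu_n}(z)=\frac{\log|P_n'(z)|-\log n}{\,n-1\,}=\frac{n}{\,n-1\,}\,U_{\mu_n}(z)+\frac{1}{\,n-1\,}\log|C_{\mu_n}(z)|.
\]
The first term is harmless: for Lebesgue-a.e.\ $z$ the map $w\mapsto\log|z-w|$ is $\mu$-integrable, so $U_{\mu_n}(z)\to U_\mu(z)$ almost surely by the strong law, and this upgrades to $L^1_{\mathrm{loc}}$-convergence by a routine uniform-integrability estimate (the positive part via $\frac1n\sum_i\log_+|X_i|\to\int\log_+|w|\,d\mu(w)$, the negative part via the fact that $\int_K\log_-|z-\zeta|\,dm(z)\le\pi/2$ uniformly in $\zeta$ on every compact $K$, where $dm$ is planar Lebesgue measure). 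So everything reduces to showing $\frac{1}{n-1}\log|C_{\mu_n}(z)|\to 0$ in $L^1_{\mathrm{loc}}(\mathbb{C})$, in probability.

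The positive part of this is easy: $\log_+|C_{\mu_n}(z)|\le|C_{\mu_n}(z)|\le\frac1n\sum_i\frac1{|z-X_i|}$, whence $\mathbb{E}\int_K\log_+|C_{\mu_n}(z)|\,dm(z)\le\int_K\!\int\frac{d\mu(w)}{|z-w|}\,dm(z)<\infty$ for every compact $K$, and dividing by $n-1$ and applying Markov's inequality kills it. The genuine difficulty -- and the place where the randomness is indispensable -- is the negative part: one must show that $\frac{1}{n-1}\int_K\log_-|C_{\mu_n}(z)|\,dm(z)\to 0$ in probability for every compact $K$, and by Fubini and Markov it is enough that $\int_K\mathbb{E}[\log_-|C_{\mu_n}(z)|]\,dm(z)=o(n)$. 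Here $\log_-|C_{\mu_n}|$ is large precisely where $C_{\mu_n}$ is small, i.e.\ near the critical points, and -- crucially -- one cannot argue merely that $C_{\mu_n}(z)\to C_\mu(z)\neq0$, because the Cauchy transform $C_\mu$ may vanish identically on a set of positive Lebesgue measure (for instance on the open disk when $\mu$ is uniform on the circle).

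The plan for the negative part is to split $K$ according to whether $C_\mu(z)\neq0$ or $C_\mu(z)=0$. On the first piece (where moreover $\int|z-w|^{-2}\,d\mu(w)<\infty$ for a.e.\ such $z$), Chebyshev's inequality applied to $C_{\mu_n}(z)=\frac1n\sum_i\frac1{z-X_i}$ gives $\Pr(|C_{\mu_n}(z)|<\tfrac12|C_\mu(z)|)=O(1/n)$ uniformly, so $\mathbb{E}[\log_-|C_{\mu_n}(z)|]$ is small there. On the second piece one still has $\mathbb{E}|C_{\mu_n}(z)|^2=\frac1n\int|z-w|^{-2}\,d\mu(w)\asymp 1/n$, so the typical size of $|C_{\mu_n}(z)|$ is of order $n^{-1/2}$; what is needed is a small-ball (anticoncentration) estimate for the i.i.d.\ complex sum $\sum_i\frac1{z-X_i}$ guaranteeing that $|C_{\mu_n}(z)|$ is not super-polynomially small with non-negligible probability, so that $\mathbb{E}[\log_-|C_{\mu_n}(z)|]=o(n)$ there as well. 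Controlling these bounds uniformly in $z\in K$ then yields $\int_K\mathbb{E}[\log_-|C_{\mu_n}(z)|]\,dm(z)=o(n)$ and closes the argument. I expect this lower bound on $|C_{\mu_n}|$ -- the probabilistic counterpart of the lower bound on $|L_n|$ that powered the proof of Theorem~\ref{walsh_general} -- to be the main obstacle, and to absorb essentially all of the work.
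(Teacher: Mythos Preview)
Your overall strategy---pass to $\frac{1}{n}\log|L_n(z)|$ with $L_n=P_n'/P_n=nC_{\mu_n}$, control the positive part trivially, and use an anticoncentration (small--ball) estimate for the sum $\sum_i(z-X_i)^{-1}$ to control the negative part---is exactly the route the paper follows (adapted from Kabluchko). You have correctly located the real obstacle.

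There is, however, a genuine gap in how you propose to close the negative part. You reduce via ``Fubini and Markov'' to showing $\int_K\mathbb{E}\big[\log_-|C_{\mu_n}(z)|\big]\,dm(z)=o(n)$, and then plan to bound the pointwise expectation using Chebyshev where $C_\mu(z)\neq 0$ and a small--ball bound where $C_\mu(z)=0$. This does not work: the available anticoncentration tool (the Kolmogorov--Rogozin inequality, which is what the paper uses) yields a bound of the form $\Pr\big(|L_n(z)|<\delta\big)\le C/\sqrt{\#\{\text{nondegenerate summands}\}}$ that is \emph{independent of} $\delta$. Such a bound gives $\frac{1}{n}\log|L_n(z)|\to 0$ in probability but says nothing about $\mathbb{E}[\log_-|L_n(z)|]$, since integrating the constant tail in $t=\log(1/\delta)$ diverges. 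Your Chebyshev step has the same defect: knowing $\Pr(|C_{\mu_n}|<\tfrac12|C_\mu|)=O(1/n)$ does not bound $\mathbb{E}[\log_-|C_{\mu_n}|\,;\,|C_{\mu_n}|<\tfrac12|C_\mu|]$. The paper avoids this by \emph{not} passing through an expectation: it proves (i) pointwise convergence $\frac{1}{n}\log|L_n(z)|\to 0$ in probability (your (A1)/(A2), with Kolmogorov--Rogozin supplying the lower bound uniformly, no case split on $C_\mu$), and separately (ii) tightness of $\int_{\mathbb{D}_r}\frac{1}{n^2}\log^2|L_n(z)|\,dm(z)$, obtained by writing $\log_-|L_n|\le\log_-|P_n'|+\log_+|P_n|$ and using $\sup_\alpha\int_K\log_-^2|z-\alpha|\,dm(z)<\infty$. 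A Tao--Vu uniform--integrability lemma then upgrades (i)$+$(ii) to $\int_K f\,\frac1n\log|L_n|\,dm\to 0$ in probability for every $f\in C_c^\infty$, which is what you need.

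A secondary issue: your step ``$U_{\mu_n}\to U_\mu$ by the strong law, the positive part via $\frac1n\sum\log_+|X_i|\to\int\log_+|w|\,d\mu$'' silently assumes $\int\log_+|w|\,d\mu(w)<\infty$; without it $U_\mu$ is not even defined. The remedy is to work with the difference $U_{\nu_n}-U_{\mu_n}$ (equivalently $\frac1n\log|L_n|$) directly and never invoke $U_\mu$. Note that the paper itself only recovers Kabluchko's theorem under this moment hypothesis (Corollary~\ref{corollary4:kabluchko}); the $L^2$--tightness argument in Lemma~\ref{tight} uses the log--Ces\`aro bound in the same place.
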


Further results concerning critical points and zeros of random polynomials are discussed below. In \cite{cheung} the authors (Pak-Leong Cheung, Tuen Wai Ng, Jonathan Tsai and SCP Yam) prove that the empirical law of zeros of the higher derivatives for the polynomial whose zeros are i.i.d. with law $\mu$ supported in $S^1$ converge to the same probability measure $\mu$. In \cite{cheung} the authors also obtain similar results for the zeros of generalized derivatives of polynomials. Similar results for critical points of characteristic polynomials of random matrix ensembles (Haar distributed on $O(n)$, $SO(n)$, $U(n)$, $Sp(n)$) are proved in \cite{orourke} by O'Rourke. 


%
%
 
%



In this section we will present two results concerning the zeros and critical points of the sequence of random polynomials. In the previous section we have seen examples of polynomials for which the limiting empirical distribution of zeros and critical points do not agree. Where as if  the zeros of the polynomial are chosen to be i.i.d. random variables, then the statement holds \cite{kabluchko}. These results bridge the gap between the two scenarios, i.e., we reduce the randomness in choosing the zeros and show that the statement holds. In Theorem \ref{thm1} we will start with two sequences of complex numbers which are asymptotically distributed according to a same probability measure. We also assume that the two sequences are sufficiently different (precise conditions are stated in the theorem). Then we construct a sequence of random numbers, whose terms are chosen independently at random from the corresponding terms of either of the sequences. If we make a sequence of polynomials whose zeros are the terms of the obtained random sequence, then  the limiting measure of the critical points of this sequence of polynomials will agree with that of the limiting measure of the sequences we started with.


We prove the result for a specific class of sequences which we call as \textit{log-Ces\'{a}ro-bounded} which is defined as follows.
\begin{definition}\label{def:log-cesaro}
We say a sequence of complex numbers $\{a_n\}_{n\geq1}$ to be \textit{log-Ces\'{a}ro-bounded} if the Ces\'{a}ro means of the positive part of their logarithms are bounded i.e., the sequence $\{\frac{1}{n}\sum_{k=1}^{n}\log_+|a_k|\}$ is bounded.
\end{definition}

\begin{eg}
Any bounded sequence is a log-Ces\'{a}ro bounded. 
\end{eg}

\begin{theorem}\label{thm1}
	Let $\{a_k\}_{k\geq1}$ and $\{b_k\}_{k\geq1}$ be two $\mu$-distributed and log-Ces\'{a}ro bounded sequences of complex numbers. Additionally assume that, $a_k \neq b_k$ for infinitely many $k$.
	Define the sequence of independent random variables $\xi_k$ such that $\xi_k = a_k $ or $b_k$ with equal probability, for $k\geq1$. Define the polynomials $P_n(z):=(z-\xi_1)(z-\xi_2)\dots(z-\xi_n)$. Then, $\text{$\mathscr{M}$}(P_n)\xrightarrow{w}\mu$ almost surely and $\text{$\mathscr{M}$}(P_n')\xrightarrow{w}\mu$ in probability.  
\end{theorem}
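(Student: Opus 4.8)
The plan is to follow Kabluchko's strategy via logarithmic potentials, adapted to the reduced-randomness setting. Write $L_n(z) = P_n'(z)/P_n(z) = \sum_{k=1}^n \frac{1}{z-\xi_k}$ for the log-derivative. The almost sure convergence $\mathscr{M}(P_n) \xrightarrow{w} \mu$ is immediate: since $\xi_k \in \{a_k, b_k\}$ and both sequences are $\mu$-distributed, the empirical measure $\frac1n\sum \delta_{\xi_k}$ differs from $\frac1n\sum\delta_{a_k}$ by a signed measure of total variation at most $\frac{2}{n}\#\{k\le n : \xi_k \ne a_k\}$ in each coordinate — more carefully, one tests against a fixed $f\in C_c^\infty$ and applies a strong law of large numbers (or Borel--Cantelli after a variance bound) to the independent bounded summands $f(\xi_k) - \tfrac12 f(a_k) - \tfrac12 f(b_k)$, concluding that $\frac1n\sum_k f(\xi_k) \to \int f\,d\mu$ a.s. So the real content is the critical points.

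For the critical points, I would work with logarithmic potentials $U_{\nu}(z) = \int \log|z-w|\,d\nu(w)$. The key identity is that $\frac1n \log|P_n(z)| = \int \log|z-w|\,d\mathscr{M}(P_n)(w)$ and similarly for $P_n'$, so it suffices to show that $\frac1n\log|P_n'(z)| - \frac1n\log|P_n(z)| \to 0$ in an appropriate sense (convergence in $L^1_{loc}$ of the difference of potentials, then apply the distributional Laplacian, which is $2\pi$ times the counting measure). Now $\frac1n\log|P_n'(z)| - \frac1n\log|P_n(z)| = \frac1n\log|L_n(z)|$. The heart of the argument is therefore: \textbf{show that $\frac1n\log|L_n(z)| \to 0$ in $L^1_{loc}(\mathbb{C})$, in probability.} For the upper bound one splits $L_n$ over near and far terms: the far terms contribute $O(1)$ uniformly, and the near terms are controlled because $\mu$-distributedness plus the log-Cesàro bound keeps $\frac1n\sum_k \log_+\frac{1}{|z-\xi_k|}$ integrable and $o(n)$ after integration in $z$ — here the log-Cesàro-bounded hypothesis is exactly what is needed to control the tails/masses at infinity so that the potentials $U_{\mathscr{M}(P_n)}$ don't escape. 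For the lower bound (i.e. $\frac1n\log|L_n(z)| \ge -o(n)$ off a small set), one needs that $L_n(z)$ is not anomalously small; this is where one uses the \emph{difference} between the two sequences. Following Kabluchko, the idea is to write $L_n = \mathbb{E}[L_n] + (L_n - \mathbb{E}[L_n])$ where $\mathbb{E}$ averages over the $\xi_k$'s; the mean $\frac1n\mathbb{E}[L_n](z) \to \int \frac{1}{z-w}\,d\mu(w) =: C_\mu(z)$, the Cauchy transform of $\mu$, which is nonzero a.e.; and the fluctuation term is a sum of independent terms $\frac12(\frac{1}{z-a_k} - \frac{1}{z-b_k})$-type quantities whose variance one estimates and shows is $o(n^2)$ after an integration in $z$ and truncation, so that $\frac1n L_n(z) \to C_\mu(z)$ in probability in $L^1_{loc}$, and hence $\frac1n\log|L_n| \to 0$.

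More concretely, the steps in order: (1) prove the a.s. convergence of zero measures by a SLLN on bounded test functions; (2) set up the potential-theoretic framework, reducing the critical-point statement to $L^1_{loc}$-convergence of $\frac1n\log|L_n|$ to $0$; (3) establish tightness / no-escape-of-mass for $\{\mathscr{M}(P_n)\}$ and $\{\mathscr{M}(P_n')\}$ using the log-Cesàro bound, so that distributional-Laplacian convergence actually yields weak convergence of probability measures; (4) prove the upper bound $\limsup \frac1n\log|L_n(z)| \le 0$ in $L^1_{loc}$ (easy direction, triangle inequality plus integrability of $\log_+ \frac1{|z-\cdot|}$); (5) prove the lower bound via the first-and-second-moment method: compute $\frac1n\mathbb{E}[L_n]$, show it converges to $C_\mu$ which is a.e. nonzero, bound $\mathrm{Var}$ of a truncated version of $\frac1n L_n(z)$ and integrate over a large disk to upgrade to $L^1_{loc}$-convergence in probability, then deduce the $\log$-statement using that $|\log|\cdot||$ of an $L^1$-convergent sequence with a.e.-nonzero limit behaves well after excising a small bad set (a uniform-integrability / Chebyshev argument).

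\textbf{Main obstacle.} I expect Step (5), the lower bound on $|L_n(z)|$, to be the crux — exactly as in Kabluchko's paper. The danger is that $L_n(z)$ has "accidental" zeros or near-zeros (these are precisely the critical points of $P_n$ we are trying to locate), and without the i.i.d. assumption one cannot quote Kabluchko verbatim; the hypotheses ``$\mu$-distributed,'' ``log-Cesàro bounded,'' and ``$a_k \ne b_k$ infinitely often'' must be combined to make the second-moment estimate on the fluctuation $\frac1n(L_n - \mathbb{E}[L_n])$ work. The subtle point is that the variance of each summand $\frac{1}{z-\xi_k}$ is $\frac14|\frac{1}{z-a_k} - \frac{1}{z-b_k}|^2$, which is large precisely when $z$ is near $a_k$ or $b_k$; one has to truncate/regularize (replace $\frac{1}{z-w}$ by a smoothed kernel, or work on the level of potentials rather than Cauchy transforms) and then show the truncation error is negligible after averaging in $z$ — this is where $\mu$-distributedness and the log-Cesàro bound re-enter to bound the relevant double integrals. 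Controlling this uniformly enough to get convergence in probability (not just in expectation) of the $L^1_{loc}$ norm is the technical heart of the proof; everything else is comparatively routine potential theory.
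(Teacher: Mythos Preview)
Your overall architecture matches the paper: define $L_n(z)=P_n'(z)/P_n(z)$, show $\frac1n\log|L_n(z)|\to 0$ in a sense strong enough to take the distributional Laplacian, split into an upper bound, a lower bound, and a uniform-integrability/tightness estimate (the paper packages these as conditions (A1), (A2), (A3) and feeds them into a Tao--Vu lemma). Steps (1)--(4) of your outline are essentially what the paper does, and you correctly flag Step (5), the lower bound on $|L_n(z)|$, as the crux.

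However, your proposed Step (5) has a genuine gap. You want to show $\frac1n L_n(z)\to C_\mu(z)$ in probability by a first/second-moment argument, then use $C_\mu\neq 0$ a.e.\ to deduce $\frac1n\log|L_n(z)|\to 0$. Two things go wrong. First, $C_\mu$ need \emph{not} be nonzero a.e.: if $\mu$ is uniform on the unit circle, $C_\mu\equiv 0$ on the open unit disk, so knowing $\frac1n L_n\to 0$ there tells you nothing about $\frac1n\log|L_n|$. Second, even where $C_\mu\neq 0$, the convergence $\frac1n\mathbb{E}[L_n](z)=\frac{1}{2n}\sum_k\bigl(\tfrac{1}{z-a_k}+\tfrac{1}{z-b_k}\bigr)\to C_\mu(z)$ is a \emph{deterministic} statement about the sequences $\{a_k\},\{b_k\}$; since $w\mapsto \tfrac{1}{z-w}$ is unbounded, it does not follow from $\mu$-distributedness, and in fact it is precisely this failure that produces the deterministic counterexamples such as $P_n(z)=z^n-1$. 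Truncation does not rescue this: after truncation you recover a statement about a smoothed Cauchy transform, but the difference between the truncated and true $L_n$ is again controlled only by deterministic data about the sequences, and no hypothesis forces it to be small.

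The paper bypasses both issues by using an \emph{anti-concentration} argument rather than a moment argument. Specifically, it applies the Kolmogorov--Rogozin inequality to the sum $L_n(z)=\sum_k\frac{1}{z-\xi_k}$ of independent random variables: the concentration function of each summand satisfies $Q\bigl(\tfrac{1}{z-\xi_k},e^{-n\epsilon}\bigr)\le \tfrac12$ whenever $\bigl|\tfrac{1}{z-a_k}-\tfrac{1}{z-b_k}\bigr|>2e^{-n\epsilon}$, and the hypothesis ``$a_k\neq b_k$ infinitely often'' guarantees that the number of such $k\le n$ tends to infinity, yielding $\Pr(|L_n(z)|<e^{-n\epsilon})\to 0$ for a.e.\ $z$. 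This uses only the \emph{non-degeneracy} of the randomness, not any convergence of means; in particular it works even where $C_\mu$ vanishes. That is the idea your proposal is missing.
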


For the assertion of the above theorem to hold, it is necessary to assume that the two sequences differ in infinitely many terms. Suppose not, we may choose one of the sequence to be a sequence for which the assertion of the theorem doesn't hold. Since both the sequences differ only in finitely many terms, the resulting sequence will be same as that of the sequence for which the assertion doesn't hold, with non zero probability. Hence with positive probability the statement of the Theorem \ref{thm1} doesn't hold. The log-Ces\'{a}ro boundedness on the sequences is assumed to enable the proof. We don't have any strong reason for either of the cases whether it is necessary or not.

The Theorem \ref{thm1} can be used to obtain corollaries of the following form. Choose a deterministic sequence which is $\mu$-distributed and perturb each of its term by a random variable with diminishing variances. It can be obtained that the empirical measure of the critical points of the polynomial, made from the perturbed sequence also converge to the same limiting probability measure $\mu$.

\begin{corollary}\label{Symmetric perturbations}
Let $\{u_n\}_{n\geq1}$ be a $\mu$-distributed sequence and log-Ces\'{a}ro bounded sequence. Let $\{v_n\}_{n\geq1}$ be the sequence such that $v_n=u_n+\sigma_nX_n$, where $X_n$s are i.i.d  random variables satisfying $X_n\stackrel{d}{=}-X_n$, $\ee{\text{$|X_n|$}}<\infty$  and $\sigma_n \downarrow 0$, $\sigma_n\neq0$. Define the polynomial $P_n(z):=(z-v_1)(z-v_2)\dots(z-v_n)$. Then,  $\text{$\mathscr{M}$}(P_n)\xrightarrow{w}\mu$ almost surely and  $\text{$\mathscr{M}$}(P_n')\xrightarrow{w}\mu$  in probability.
\end{corollary}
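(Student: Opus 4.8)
The plan is to deduce this from Theorem~\ref{thm1} by inserting an auxiliary sign sequence so that the perturbed sequence becomes a random choice between two deterministic sequences. Let $\{\varepsilon_n\}_{n\geq1}$ be i.i.d.\ taking values $\pm1$ with equal probability, independent of $\{X_n\}_{n\geq1}$. Since $X_n\stackrel{d}{=}-X_n$, the law of $\varepsilon_nX_n$ is $\tfrac12$ the law of $X_n$ plus $\tfrac12$ the law of $-X_n$, i.e.\ $\varepsilon_nX_n\stackrel{d}{=}X_n$; as $\{\varepsilon_n\}$ and $\{X_n\}$ are i.i.d.\ sequences this upgrades to $(u_n+\sigma_n\varepsilon_nX_n)_{n\geq1}\stackrel{d}{=}(v_n)_{n\geq1}$ as sequences. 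Because $\text{$\mathscr{M}$}(P_n)$ and $\text{$\mathscr{M}$}(P_n')$ are measurable functions of $(v_1,\dots,v_n)$, it suffices to prove the two conclusions with $v_n$ replaced by $u_n+\sigma_n\varepsilon_nX_n$. Conditioning on a realisation $(X_n)_{n\geq1}=(x_n)_{n\geq1}$ and putting $a_n:=u_n+\sigma_nx_n$, $b_n:=u_n-\sigma_nx_n$, the variable $u_n+\sigma_n\varepsilon_nx_n$ equals $a_n$ or $b_n$ with equal probability, independently over $n$ --- which is exactly the coupling in Theorem~\ref{thm1}.

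It then remains to check that, for almost every realisation $(x_n)_{n\geq1}$, the deterministic sequences $\{a_n\}$ and $\{b_n\}$ meet the hypotheses of Theorem~\ref{thm1}. First, $a_n\neq b_n$ precisely when $x_n\neq0$, and under the (necessary) assumption $\Pr(X_1\neq0)>0$ this holds for infinitely many $n$ almost surely; if instead $X_1\equiv0$ then $v_n=u_n$ and the assertion fails in general. Next, for $\mu$-distributedness the key point is that $\tfrac1n\sum_{k=1}^n\sigma_k\abs{x_k}\to0$ for a.e.\ $(x_n)$: given $\eta>0$ choose $N$ with $\sigma_k<\eta$ for $k>N$, split the average at $N$, and apply the strong law of large numbers to $\tfrac1n\sum_{k\leq n}\abs{X_k}$ on the tail, obtaining $\limsup_n\tfrac1n\sum_{k=1}^n\sigma_k\abs{X_k}\leq\eta\,\ee{\abs{X_1}}$ almost surely; let $\eta\downarrow0$. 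Since $\{u_n\}$ is $\mu$-distributed and $\abs{a_k-u_k}=\abs{b_k-u_k}=\sigma_k\abs{x_k}$, testing against Lipschitz functions shows $\tfrac1n\sum_{k\leq n}\delta_{a_k}\xrightarrow{w}\mu$ and $\tfrac1n\sum_{k\leq n}\delta_{b_k}\xrightarrow{w}\mu$. Finally, for log-Ces\'aro boundedness use the elementary bound $\log_+\abs{a_k}\leq\log2+\log_+\sigma_1+\log_+\abs{u_k}+\log_+\abs{x_k}$ and apply the strong law to $\tfrac1n\sum_{k\leq n}\log_+\abs{X_k}$, whose limit $\ee{\log_+\abs{X_1}}\leq\ee{\abs{X_1}}$ is finite; the same bound handles $\{b_n\}$.

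With these checks in place, Theorem~\ref{thm1} applies conditionally on $(x_n)$, giving $\text{$\mathscr{M}$}(P_n)\xrightarrow{w}\mu$ almost surely and $\text{$\mathscr{M}$}(P_n')\xrightarrow{w}\mu$ in probability, where now only $\{\varepsilon_n\}$ is random. Integrating over $(X_n)$ completes the argument: the conditional almost-sure statement yields the unconditional one by Fubini, and for the in-probability statement, $\Pr(\text{$\mathscr{M}$}(P_n')\in N_\mu)=\ee{\Pr(\text{$\mathscr{M}$}(P_n')\in N_\mu\mid(X_n))}\to1$ by dominated convergence, for every neighbourhood $N_\mu$ of $\mu$. (The almost-sure convergence of $\text{$\mathscr{M}$}(P_n)$ also follows directly, since $\text{$\mathscr{M}$}(P_n)=\tfrac1n\sum_{k\leq n}\delta_{v_k}$ and $\tfrac1n\sum_{k\leq n}\abs{v_k-u_k}=\tfrac1n\sum_{k\leq n}\sigma_k\abs{X_k}\to0$ almost surely.) I expect the main obstacle to be exactly the verification that $\{a_n\}$ and $\{b_n\}$ stay $\mu$-distributed for a.e.\ $(x_n)$, i.e.\ the estimate $\tfrac1n\sum_{k\leq n}\sigma_k\abs{x_k}\to0$, together with the attendant care about the non-degeneracy of $X_1$; the log-Ces\'aro bound and the measure-theoretic integrating-out are routine.
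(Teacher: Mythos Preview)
Your proof is correct and follows essentially the same route as the paper: condition on the ``unsigned'' part of the perturbation so that the perturbed sequence becomes an equal-probability choice between two deterministic sequences, then verify the hypotheses of Theorem~\ref{thm1} and integrate out. The paper phrases the conditioning as ``fix $r_n,\theta_n$ and condition on $\{X_n=\pm r_ne^{i\theta_n}\}$'', while you introduce auxiliary i.i.d.\ signs $\varepsilon_n$ and condition on $(X_n)$; these are two ways of saying the same thing. Your inline verifications of $\mu$-distributedness and log-Ces\`aro boundedness reproduce Lemmas~\ref{lemma:perturb_limit_measure} and~\ref{lemma:log-cesaro-bounded} (your $\mu$-distributedness argument uses $\ee{|X_1|}<\infty$, whereas Lemma~\ref{lemma:perturb_limit_measure} does not, but the hypothesis is available here). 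You also correctly flag the non-degeneracy condition $\Pr(X_1\neq0)>0$ needed to ensure $a_n\neq b_n$ infinitely often; the paper's proof leaves this implicit.
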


\begin{remark}
In Corollary \ref{Symmetric perturbations}, we may choose the random variables $X_n$s to have complex Gaussian distribution or uniform distribution on unit disk centred at $0$. In the case of complex Gaussian distributed random variables we get the result for unbounded perturbations and in the case of uniformly distributed random variables the perturbations are bounded.
\end{remark}

It is an easy fact (Page 15 in \cite{manjubook}) that if $\{X_n\}_{n \geq 1}$ is a sequence of i.i.d random variables that are not identically $0$ such that $\ee{\textbf{$\log_+|X_1|$}}<\infty$, then $\limsup\limits_{n \rightarrow \infty}|X_n|^\frac{1}{n}=1$. 
A special case of Theorem \ref{kabluchko} can be obtained as a corollary of the Theorem \ref{thm1}. The special case being the one in which the probability measure $\mu$ in consideration has bounded $\log_+$-moment.

\begin{corollary}\label{corollary4:kabluchko}
	Let $\mu$ be any probability measure on $\mathbb{C}$  satisfying $\int\limits_{\mathbb{C}}\log_+|z|d\mu(z)<\infty$. Let $X_1,X_2,\dots, X_n$ be i.i.d random variables distributed according to $\mu$. Define the polynomials  $P_n(z):=(z-X_1)(z-X_2)\dots(z-X_n)$. Then,  $\text{$\mathscr{M}$}(P_n)\xrightarrow{w}\mu$ almost surely and  $\text{$\mathscr{M}$}(P_n')\xrightarrow{w}\mu$  in probability.
\end{corollary}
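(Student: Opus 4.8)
The plan is to deduce this from Theorem~\ref{thm1} by realizing the i.i.d.\ sequence $X_1,X_2,\dots$ as a fair coin-flip mixture of two independent i.i.d.\ copies. First, if $\mu$ is a point mass $\delta_a$ the statement is trivial, since then $P_n(z)=(z-a)^n$ and $P_n'(z)=n(z-a)^{n-1}$, so $\text{$\mathscr{M}$}(P_n)=\text{$\mathscr{M}$}(P_n')=\delta_a=\mu$; hence assume $\mu$ is not a point mass. Let $\{X_k\}_{k\ge1}$ and $\{Y_k\}_{k\ge1}$ be two independent sequences of i.i.d.\ $\mu$-distributed random variables, and let $\{\epsilon_k\}_{k\ge1}$ be i.i.d.\ fair signs independent of both; set $\xi_k:=X_k$ if $\epsilon_k=+1$ and $\xi_k:=Y_k$ if $\epsilon_k=-1$. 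Conditionally on $\{\epsilon_k\}$ the $\xi_k$ are independent and each $\mu$-distributed, so $\{\xi_k\}_{k\ge1}$ is itself i.i.d.\ $\mu$-distributed; therefore it suffices to prove the conclusion for the polynomials $P_n(z)=\prod_{k=1}^n(z-\xi_k)$, as these have the same law as $(z-X_1)\cdots(z-X_n)$.

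Next I would verify that, almost surely with respect to the joint law of $(\{X_k\},\{Y_k\})$, the realized sequences $\{x_k\}$ and $\{y_k\}$ simultaneously satisfy the hypotheses of Theorem~\ref{thm1}. That both are $\mu$-distributed is the almost sure weak convergence of empirical measures of i.i.d.\ samples. That both are log-Ces\`aro bounded follows from the strong law of large numbers applied to $\log_+|X_k|$: by hypothesis $\ee{\log_+|X_1|}=\int_{\mathbb{C}}\log_+|z|\,d\mu(z)<\infty$, so $\frac1n\sum_{k=1}^n\log_+|x_k|$ converges and in particular is bounded. Finally $x_k\ne y_k$ for infinitely many $k$, because the events $\{X_k\ne Y_k\}$ are independent with $\Pr(X_k\ne Y_k)=1-\sum_a\mu(\{a\})^2>0$ (here we use that $\mu$ is not a point mass, so $\sum_a\mu(\{a\})^2\le\sup_a\mu(\{a\})<1$), whence the second Borel--Cantelli lemma gives infinitely many occurrences almost surely.

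Now fix any realization $\omega_0=(\{x_k\},\{y_k\})$ lying in this full-probability event and apply Theorem~\ref{thm1} with $a_k=x_k$ and $b_k=y_k$; the ambient randomness in that theorem is precisely the sign sequence $\{\epsilon_k\}$. This yields, for almost every $\omega_0$, that $\text{$\mathscr{M}$}(P_n)\xrightarrow{w}\mu$ $\,\Pr_\epsilon$-almost surely and $\text{$\mathscr{M}$}(P_n')\xrightarrow{w}\mu$ in $\Pr_\epsilon$-probability. To remove the conditioning, fix a metric $d$ inducing the weak topology on $\mbox{\textbf{M}}(\mathbb{C})$. For the zeros, the conditional almost sure statement together with Fubini's theorem gives $d(\text{$\mathscr{M}$}(P_n),\mu)\to0$ almost surely under the joint law, and hence for the polynomial with zeros $X_1,\dots,X_n$ by equality in law. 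For the critical points, for each $\delta>0$ we have $\Pr_\epsilon\!\big(d(\text{$\mathscr{M}$}(P_n'),\mu)>\delta\big)\to0$ for almost every $\omega_0$; since these quantities are bounded by $1$, dominated convergence shows the corresponding joint probabilities tend to $0$, i.e.\ $\text{$\mathscr{M}$}(P_n')\xrightarrow{w}\mu$ in probability, and again this transfers to the $\{X_k\}$-polynomial by equality in law.

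The real content is Theorem~\ref{thm1}; the only delicate points in this reduction are the separate treatment of the point-mass case (forced on us by the ``differ in infinitely many terms'' hypothesis) and the passage from statements conditional on the two deterministic sequences to unconditional ones, where convergence in probability has to be routed through a metrization of the weak topology and a bounded/dominated convergence argument. I do not expect any genuine obstacle beyond this bookkeeping.
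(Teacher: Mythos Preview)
Your proposal is correct and follows essentially the same approach as the paper: treat the degenerate case separately, realize the i.i.d.\ sequence as a fair coin-flip mixture of two independent i.i.d.\ $\mu$-samples, verify the hypotheses of Theorem~\ref{thm1} almost surely via the strong law of large numbers for $\log_+|X_k|$, and then uncondition. You are in fact more explicit than the paper on two points---the Borel--Cantelli verification that $x_k\ne y_k$ infinitely often, and the dominated convergence step to pass from conditional to unconditional convergence in probability---both of which the paper either omits or handles in a general preamble before the proofs of the corollaries.
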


 Let $\{u_n\}_{n\geq1}$ be $\mu$-distributed sequence and $\{v_n\}_{n\geq1}$ be $\nu$-distributed sequence and both are log-Ces\'{a}ro bounded. We replace the terms in the first sequence with those of the second sequence each with probability $p>0$. Let the random sequence be $\{\xi_n\}_{n\geq1}$, define $P_n(z)=(z-\xi_1)(z-\xi_2)\dots(z-\xi_n)$. Then, the limiting empirical measures of zeros and critical points of the polynomial $P_n$ will agree. We state this as the following proposition.

\begin{proposition}\label{corollary5:thm1}
	Let $\{u_k\}_{k\geq1}$ be a $\mu$-distributed sequence of complex numbers and $\{v_k\}_{k\geq1}$ be a $\nu$-distributed sequence of complex numbers. Assume that both the sequences $\{u_k\}_{k\geq1}$ and $\{v_k\}_{k\geq1}$ are log-Ces\'{a}ro bounded and $u_k\neq v_k$ for infinitely many $k$. For $i\geq1$ define the sequence of independent random variables to be $\xi_i = u_i $ with probability $p$ and $v_i$ with probability $1-p$, where $0 < p<1$. Define the polynomials  $P_n(z):=(z-\xi_1)(z-\xi_2)\dots(z-\xi_n)$. Then,  $\text{$\mathscr{M}$}(P_n)\xrightarrow{w} p\mu+(1-p)\nu$ almost surely and  $\text{$\mathscr{M}$}(P_n')\xrightarrow{w} p\mu+(1-p)\nu$  in probability.
\end{proposition}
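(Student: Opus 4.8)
The plan is to reduce Proposition~\ref{corollary5:thm1} to Theorem~\ref{thm1}. The obstacle is that Theorem~\ref{thm1} selects each $\xi_k$ from $\{a_k,b_k\}$ with probability exactly $1/2$, whereas here we want probability $p$, and also the two limiting measures $\mu,\nu$ are now allowed to differ, so the target measure is the mixture $p\mu+(1-p)\nu$. Both of these are cosmetic once one sets things up correctly, and the log-Ces\'{a}ro hypothesis and the ``infinitely many $k$ with $u_k\neq v_k$'' hypothesis are already built into the statement, which is exactly what is needed.

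First I would handle the selection probability by a standard ``thinning/relabelling'' trick. Fix a large integer $N$ with $|p-m/N|$ small (or, to get it exactly, assume first $p$ is rational, $p=m/N$, and pass to general $p$ by a continuity/approximation argument at the end, since the map $p\mapsto p\mu+(1-p)\nu$ is weakly continuous and the empirical measures depend monotonically enough). Given $p=m/N$, I would realise the Bernoulli$(p)$ choice $\xi_i=u_i$ vs $v_i$ by grouping indices into blocks of length $N$ and, within each block, choosing which $m$ of the $N$ slots take the ``$u$'' value uniformly; more simply, since Theorem~\ref{thm1} already gives an \emph{almost sure} statement for $\mathscr M(P_n)$ and an \emph{in probability} statement for $\mathscr M(P_n')$, it suffices to produce, from the data of Proposition~\ref{corollary5:thm1}, two auxiliary $\lambda$-distributed log-Ces\'{a}ro-bounded sequences differing in infinitely many terms whose fair coin flips reproduce (have the same law as) the sequence $\{\xi_i\}$; then Theorem~\ref{thm1} applied to those sequences yields the conclusion with $\lambda = p\mu+(1-p)\nu$. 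Concretely, the two auxiliary sequences would be built by interleaving copies of the $u$'s and $v$'s in the right proportions so that a uniform choice between the $k$-th terms lands on $u_i$ with probability $p$ and on $v_i$ with probability $1-p$.

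The remaining points are routine verifications. (i) The interleaved sequences are $\lambda$-distributed: the empirical measure of the first $n$ terms is, up to an $o(1)$ boundary correction from the last incomplete block, a convex combination $\tfrac1n(\#u\text{-terms})\cdot(\text{empirical of }u) + \tfrac1n(\#v\text{-terms})\cdot(\text{empirical of }v)$, and by construction the proportions tend to $p$ and $1-p$ while the two empirical pieces converge weakly to $\mu$ and $\nu$ respectively; hence the whole thing converges weakly to $p\mu+(1-p)\nu$. (ii) Log-Ces\'{a}ro boundedness is preserved under interleaving two log-Ces\'{a}ro-bounded sequences, since $\tfrac1n\sum_{k\le n}\log_+|\cdot|$ of the interleaved sequence is again a weighted average of the corresponding Ces\'{a}ro means of the two pieces, each bounded by hypothesis. (iii) The ``differ in infinitely many terms'' hypothesis on $\{u_k\},\{v_k\}$ transfers to the auxiliary pair by the way the interleaving is chosen (e.g. offset the blocks so that infinitely many aligned slots carry a genuine $u_i$ against a genuine $v_i$ with $u_i\neq v_i$).

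I would then invoke Theorem~\ref{thm1} verbatim for the auxiliary sequences to conclude $\mathscr M(P_n)\xrightarrow{w}p\mu+(1-p)\nu$ almost surely and $\mathscr M(P_n')\xrightarrow{w}p\mu+(1-p)\nu$ in probability, and finally remove the rationality assumption on $p$: approximate $p$ by rationals $p_j\to p$, use that the sequence $\{\xi_i\}$ for parameter $p$ can be coupled to those for parameters $p_j$ so that they agree outside an event of probability $\to 0$, and pass to the limit using weak continuity of $p\mapsto p\mu+(1-p)\nu$ together with the fact that weak-in-probability convergence is stable under such small-probability modifications. The main obstacle, as noted, is purely bookkeeping: arranging the interleaving so that \emph{all three} hypotheses of Theorem~\ref{thm1} ($\lambda$-distributed, log-Ces\'{a}ro bounded, infinitely many distinct aligned terms) hold simultaneously and the induced law on $\{\xi_i\}$ is exactly the Bernoulli$(p)$ mixture; once that is done, the proof is immediate.
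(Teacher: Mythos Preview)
Your reduction to Theorem~\ref{thm1} is the right idea, but the construction you sketch has a genuine gap. You propose to build two \emph{deterministic} auxiliary sequences $\{a_k\},\{b_k\}$ (by ``interleaving copies of the $u$'s and $v$'s in the right proportions'') so that a \emph{fair} coin flip between $a_k$ and $b_k$ reproduces the law of $\xi_k$, i.e.\ $\Pr(\xi_k=u_k)=p$. But for a single fixed index $k$, a fair coin between two deterministic values $a_k,b_k\in\{u_k,v_k\}$ can only produce $\Pr(\xi_k=u_k)\in\{0,\tfrac12,1\}$. No deterministic interleaving, block structure, or reindexing fixes this while keeping the polynomial $P_n=\prod_{k\le n}(z-\xi_k)$ the same object as in the statement: either the marginal law at each index is wrong, or you have changed which $\xi_k$'s enter $P_n$. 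Consequently the whole rational-$p$ plus approximation-and-coupling scheme is built on a construction that cannot exist.

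The paper's proof resolves this with a single clean trick: it allows the auxiliary sequences to be \emph{random}. Independently of everything, let each $a_k$ equal $u_k$ with probability $p$ and $v_k$ with probability $1-p$, and let $\{b_k\}$ be an independent copy of the same procedure. A fair coin between $a_k$ and $b_k$ then has exactly the law of $\xi_k$, and the $\xi_k$'s are independent across $k$. Conditioning on the realisation of $(\{a_k\},\{b_k\})$, one checks almost surely that both sequences are $(p\mu+(1-p)\nu)$-distributed (this is Lemma~\ref{random_seq_limit}), log-Ces\`aro bounded (since $\log_+|a_k|\le\log_+|u_k|+\log_+|v_k|$), and differ at infinitely many indices (at each $k$ with $u_k\ne v_k$ one has $\Pr(a_k\ne b_k)=2p(1-p)>0$, so Borel--Cantelli applies). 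Theorem~\ref{thm1} then gives the conclusion for every such realisation, and the conditioning argument already set up before the proof of Corollary~\ref{Symmetric perturbations} transfers it back. No rationality assumption on $p$ and no approximation or coupling step is needed.
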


We have seen examples of sequence of polynomials for which the limiting measure of zeros and critical points do not agree. Consider the case of the sequence of polynomials whose $n$-th term is $P_n(z)=z^n-1$. We have seen that the limiting measure of zeros is the uniform probability measure on $S^1$ and that of critical points is dirac measure at $0$. The zeros of $P_n$ are the $n$-th roots of unity. The zeros are symmetrical and balanced in many respects. Removing any of these zeros can disturb this symmetry and be considered as a perturbation asymptotically. Define the sequence to be $\{Q_n\}_{n\geq1}$, where \[Q_n(z)=\frac{P_{n+1}(z)}{z-1}=z^n+z^{n-1}+\dots+1.\] 

It will be shown that the limiting zero measure of the sequence $\{Q_n\}_{n\geq1}$ is the uniform probability measure on $S^1$. The derivative of these polynomials is \[Q_n'(z)=nz^{n-1}+(n-2)z^{n-1}+\dots+1=\frac{nz^{n+1}-(n+1)z^n+1}{(z-1)^2}.\]
 We will show that the limiting zero measure of $(z-1)^2Q_n'(z)$ $(=nz^{n+1}-(n+1)z^n+1)$ is the uniform probability measure on $S^1$ which in turn gives the limiting zero measure for $Q_n'$. Fix any $r>1$, there is $N_r$ such that whenever $n>N_r$, for $|z|\geq r$ we have,
 \[|nz^{n+1}-(n+1)z^n+1|\geq|n|z|^{n+1}+1-(n+1)|z|^n|>0.\]
 Similarly fix any $r<1$, there is $N_r$ such that whenever $n>N_r$, for $|z|\leq r$ we have,
 \[|nz^{n+1}-(n+1)z^n+1|=|z|^{n+1}\bigg|n-\frac{n+1}{z}+\frac{1}{z^{n+1}}\bigg|\geq|z|^{n+1}\bigg|\big|\frac{n+1}{z}-n\big|-\frac{1}{|z|^{n+1}}\bigg|>0.\]
 
 Hence the limiting zero measure of the sequence $\{Q_n'\}_{n\geq1}$ is supported on $S^1$. If we show that asymptotically the angular distribution of the zeros of $Q_n'$ is uniform on $[0,2\pi)$, then it follows that the limiting zero measure of the sequence $\{Q_n'\}_{n\geq1}$ is the uniform probability measure on $S^1$. To show this we use a bound of Erd\"{o}s-Turan for the discrepancy between a probability measure and uniform measure on $S^1$. We will sate the inequality in the case where the two measures are counting probability measure zeros of polynomial and uniform probability measure on $S^1$.

\begin{theorem}[Erd\"{o}s-Turan~\cite{erdos-turan}]
Let $\{a_k\}_{0\leq k\leq N}$ be a sequence of complex numbers such that $a_0a_N\neq 0$ and let, \[P(z)=\sum\limits_{k=0}^{N}a_kz^k.\] 
Then, \[\bigg|\frac{1}{N}\nu_N(\theta,\phi)-\frac{\phi-\theta}{2\pi}\bigg|^2\leq\frac{C}{N}\log\bigg|\frac{\sum_{k=0}^{N}|a_k|}{\sqrt{|a_0a_N|}}\bigg|,\]
for some constant $C$ and $\nu_N(\theta,\phi):=\#\{z_k:\theta\leq\arg(z_k)<\phi\}$, where $z_1,z_2,\dots,z_N$ are zeros of $P(z)$.
\end{theorem}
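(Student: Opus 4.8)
The plan is to reduce everything to a bound on the exponential sums $\sum_{j=1}^N e^{im\arg z_j}$ over the zeros $z_1,\dots,z_N$ of $P$ (counted with multiplicity, so $N=\deg P$), by means of the classical Erd\H{o}s--Tur\'an discrepancy inequality on the circle, and then to read those bounds off the logarithmic potential of $P$ on the unit circle. Write $L:=\frac{\sum_k|a_k|}{\sqrt{|a_0a_N|}}$ (note $|a_0|+|a_N|\ge 2\sqrt{|a_0a_N|}$, so $L\ge2$), let $\mu_N:=\frac1N\sum_{j=1}^N\delta_{\arg z_j}$ be the normalized counting measure of the arguments, regarded as a probability measure on $\R/2\pi\Z$, and set $D_N:=\sup_{\theta\le\phi}\bigl|\mu_N([\theta,\phi))-\tfrac{\phi-\theta}{2\pi}\bigr|$; the quantity to be estimated is at most $D_N^2$. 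The standard discrepancy inequality (obtained by sandwiching the indicator of an arc between trigonometric polynomials of degree $M$ --- the Fej\'er/Beurling--Selberg extremal functions) gives, for every integer $M\ge1$,
\[
D_N\ \le\ C_1\Bigl(\frac1M+\frac1N\sum_{m=1}^M\frac1m\Bigl|\sum_{j=1}^N e^{im\arg z_j}\Bigr|\Bigr).
\]
So it suffices to prove $\bigl|\sum_{j=1}^N e^{im\arg z_j}\bigr|\le 6\,m\log L$ for each $m\ge1$; feeding this in gives $D_N\le C_1(\tfrac1M+\tfrac{6M\log L}{N})$, and the choice $M\asymp\sqrt{N/\log L}$ yields $D_N\le C\sqrt{(\log L)/N}$, which squares to the assertion (when $\log L>N$ the bound is trivial since $D_N\le1$).

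For the exponential-sum bound, the idea is to reflect the zeros lying outside the unit disc back into it: set $z_j':=z_j$ if $|z_j|\le1$ and $z_j':=1/\overline{z_j}$ if $|z_j|>1$, so that $\arg z_j'=\arg z_j$ and all $|z_j'|\le1$. Consider the monic polynomial $\widetilde P(z):=\prod_{j=1}^N(z-z_j')$. On the unit circle $|\widetilde P(e^{i\theta})|=|P(e^{i\theta})|/M(P)$, where $M(P)=|a_N|\prod_{|z_j|>1}|z_j|$ is the Mahler measure; since $\sqrt{|a_0a_N|}\le M(P)\le\frac1{2\pi}\int_0^{2\pi}|P(e^{i\theta})|\,d\theta\le\sum_k|a_k|$ (Jensen's inequality for the middle step), we get $|\widetilde P(e^{i\theta})|\le L$ on the circle. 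Moreover $\frac1{2\pi}\int_0^{2\pi}\log|\widetilde P(e^{i\theta})|\,d\theta=0$ (Jensen's formula, all zeros in $\overline{\D}$), so $\frac1{2\pi}\int_0^{2\pi}\bigl|\log|\widetilde P(e^{i\theta})|\bigr|\,d\theta=\frac1\pi\int_0^{2\pi}\log_+|\widetilde P(e^{i\theta})|\,d\theta\le 2\log L$. Also $\prod_j|z_j'|=|a_0a_N|/M(P)^2$, hence $-\sum_j\log|z_j'|=\log\frac{M(P)^2}{|a_0a_N|}\le 2\log L$.

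Expanding $\log|e^{i\theta}-z_j'|=-\Re\sum_{\ell\ge1}\tfrac1\ell (z_j')^\ell e^{-i\ell\theta}$ (absolutely for $|z_j'|<1$, and in $L^1$ when $|z_j'|=1$) and summing over $j$ shows that the $m$-th Fourier coefficient of $\log|\widetilde P(e^{i\theta})|$ equals $-\tfrac1{2m}\overline{\sum_j(z_j')^m}$; therefore $\bigl|\sum_j(z_j')^m\bigr|=2m\,\bigl|\widehat{\log|\widetilde P|}(m)\bigr|\le 2m\cdot\frac1{2\pi}\int_0^{2\pi}|\log|\widetilde P(e^{i\theta})||\,d\theta\le 4m\log L$. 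The error from the moduli not being $1$ is $\bigl|\sum_j e^{im\arg z_j}-\sum_j(z_j')^m\bigr|\le\sum_j(1-|z_j'|^m)\le m\sum_j(1-|z_j'|)\le -m\sum_j\log|z_j'|\le 2m\log L$, using $1-x\le-\log x$ on $(0,1]$. Adding the two estimates gives $\bigl|\sum_j e^{im\arg z_j}\bigr|\le 6m\log L$, completing the chain and hence the proof.

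I expect the main obstacle to be the very first step if one insists on a self-contained treatment: constructing the one-sided trigonometric approximants to the indicator of an arc with the sharp $O(1/M)$ error (the Selberg polynomials) is the one genuinely delicate ingredient, although it is entirely classical and may simply be quoted. Within the part specific to polynomials, the only subtle point is the modulus error $\sum_j(1-|z_j'|)$ in the last step --- it is not obviously small a priori, and the argument works precisely because $\prod_j|z_j'|$ is exactly $|a_0a_N|/M(P)^2$, so that $\log L$ simultaneously controls the size of $\widetilde P$ on the circle and the deficiency of the $|z_j'|$ from $1$.
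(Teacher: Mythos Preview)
The paper does not prove this theorem; it is quoted as a classical result of Erd\H{o}s and Tur\'an (with a citation to \cite{erdos-turan}) and used as a black box in the discussion of the example $Q_n(z)=(z^{n+1}-1)/(z-1)$. There is therefore no proof in the paper to compare your attempt against.

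That said, your argument is correct and is essentially the standard modern proof (close to Ganelius's treatment): reduce the angular discrepancy to exponential sums via the Erd\H{o}s--Tur\'an/Beurling--Selberg inequality on the circle, reflect the zeros into the closed unit disc so that $\arg z_j$ is preserved, and then read off the power sums $\sum_j(z_j')^m$ as Fourier coefficients of $\log|\widetilde P|$ on the unit circle. The two places where something could go wrong---the $L^1$ interpretation of the Fourier expansion when some $|z_j'|=1$, and the control of the modulus deficiency $\sum_j(1-|z_j'|)$---you handle correctly via Jensen's formula and the identity $\prod_j|z_j'|=|a_0a_N|/M(P)^2$. The optimization in $M$ and the trivial bound when $\log L>N$ close the argument.
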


Applying the above inequality for the polynomial $(z-1)^2Q_n'(z)$,  we get
\[\bigg|\frac{1}{n}\nu_n(\theta,\phi)-\frac{\phi-\theta}{2\pi}\bigg|^2\leq\frac{C}{n}\log\bigg|\frac{2n+2}{\sqrt{n}}\bigg|\stackrel{n\rightarrow \infty}{\longrightarrow}0.\]

Therefore the limiting zero measure of $Q_n'$ is uniform probability measure on $S^1$ which agrees with the limiting zero measure of $Q_n$. As an application of the forthcoming theorem we will see that if we choose random subsequence from a $\mu$-distributed sequence, then the limiting  distribution of zeros and critical points agree for the polynomials made from this random sequence.

The next result (Theorem \ref{thm2}) deals with counting the zeros and pole of a random rational function. The random rational function is defined as $L_n(z)=\sum\limits_{k=1}^{n}\frac{a_k}{z-z_k}$. In a special case where $\sum_{k=1}^{n}a_k=n$ and $a_k>0$ for every $k=1,2,\dots,n$, it is called generalized Sz.-Nagy derivative. For a classical derivative all $a_k$s are equal to $1$. It is mentioned in \cite{rahmanbook} that the motivation in studying generalized derivative is that many of the results for classical derivatives extend to the generalized derivatives.
\begin{theorem}\label{thm2}
	Let $a_1, a_2, \dots$ be i.i.d. random variables satisfying $\ee{\text{$|a_1|$}}\text{$< \infty$}$.  Let $\{z_n\}_{n\geq 1}$ be a sequence satisfying that for Lebesgue a.e. $z \in \mathbb{C}$ there exists a compact set $K_z$ with $d(z,K_z)>0$ such that there are infinitely many $z_k$'s in $K_z$, and there is a point $\omega$ that is not a limit point of $z_n$'s. Define $L_n(z):=  \frac{a_1}{z-z_1}+\frac{a_2	}{z-z_2}\dots+\frac{a_n}{z-z_n} $. Then $\frac{1}{n}\Delta \log(|L_n(z)|)\rightarrow 0$ in probability, in the sense of distributions.
\end{theorem}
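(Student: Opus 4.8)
The plan is to reduce the distributional statement to a statement about zero‑counting measures, to prove that $\tfrac1n\log|L_n(z)|\to0$ in probability for Lebesgue‑almost every fixed $z$, and then to feed this into a potential‑theoretic compactness argument. Since $L_n$ is (almost surely, for $n$ large) a non‑zero rational function, $\log|L_n|\in L^1_{\mathrm{loc}}(\mathbb{C})$, and we may write $L_n=Q_n/P_n$ with $P_n(z)=\prod_{k\le n}(z-z_k)$ and $Q_n(z)=\sum_{k\le n}a_k\prod_{j\ne k}(z-z_j)$, a polynomial of degree $n-1$ on the event $\sum_k a_k\neq0$ (of probability $\to1$). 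Then $\tfrac1n\Delta\log|L_n|=\tfrac{2\pi}{n}(\nu_{Q_n}-\nu_{P_n})$, where $\nu_{Q_n},\nu_{P_n}$ are the zero‑counting measures, so it suffices to show $\tfrac1n(\nu_{Q_n}-\nu_{P_n})\to0$ weakly, in probability. First I would use the point $\omega$, choosing it (as one may) not equal to any $z_k$ and not a limit point of $\{z_k\}$, so that $\inf_k|z_k-\omega|>0$: the M\"{o}bius map $z\mapsto w=(z-\omega)^{-1}$ sends the nodes to a bounded set $\{w_k\}$, turns $L_n$ into $-w\sum_k\frac{a_kw_k}{w-w_k}$, and preserves the hypotheses (the distance condition transfers, the coefficients $a_kw_k$ are independent with uniformly bounded first moments and the same non‑degeneracy as $a_1$); since $\tfrac1n\log|w|\to0$ and the change of variables is smooth off $\omega$, this reduces the problem to $\sup_k|z_k|<\infty$. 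In that case $\tfrac1n\nu_{P_n}$ is a probability measure supported in a fixed compact set, hence tight.

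Next, the pointwise estimate, which splits into an upper and a lower bound. For the upper bound, $|L_n(z)|\le\sum_{k\le n}\frac{|a_k|}{|z-z_k|}$; the strong law gives $\tfrac1n\sum_{k\le n}|a_k|\to\mathbb{E}|a_1|$ almost surely, and since $\sum_k m\big(\{z:|z-z_k|<e^{-\epsilon k}\}\big)<\infty$ for every $\epsilon>0$, Borel--Cantelli yields that for a.e.\ $z$ and every $\epsilon$ one has $|z-z_k|^{-1}\le e^{\epsilon k}$ for all large $k$; combining these, $\limsup_n\tfrac1n\log_+|L_n(z)|\le\epsilon$ almost surely for every $\epsilon$, so $\tfrac1n\log_+|L_n(z)|\to0$ a.s.\ for a.e.\ $z$. \textbf{The lower bound is the heart of the matter.} Fix (a.e.)\ $z$, let $K_z$ be the compact set from the hypothesis with $\delta:=d(z,K_z)>0$, set $J_n=\{k\le n:z_k\in K_z\}$ and $m_n=|J_n|\to\infty$, and split $L_n(z)=A_n+B_n$ with $A_n=\sum_{k\in J_n}\frac{a_k}{z-z_k}$ and $B_n$ the remaining (independent) sum. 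The coefficients $(z-z_k)^{-1}$ for $k\in J_n$ are bounded above and below, so the Kolmogorov--Rogozin inequality for the L\'{e}vy concentration function, together with the non‑degeneracy of $a_1$ (which is genuinely needed: with $a_k\equiv1$ and $\{z_k\}$ an enumeration of the nested $2^j$‑th roots of unity one has $P_n(z)=z^n-1$ along $n=2^j$, and the conclusion fails), gives $\epsilon_0>0$ with $\sup_w\mathbb{P}(|A_n-w|<\epsilon)\le C\,m_n^{-1/2}$ for all $\epsilon\le\epsilon_0$. Conditioning on $B_n$ then gives $\mathbb{P}(|L_n(z)|<\epsilon)\le C\,m_n^{-1/2}$ for $\epsilon\le\epsilon_0$; in particular $\mathbb{P}(|L_n(z)|<e^{-\eta n})\to0$ for every $\eta>0$, i.e.\ $\tfrac1n\log_-|L_n(z)|\to0$ in probability. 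With the upper bound, $\tfrac1n\log|L_n(z)|\to0$ in probability for a.e.\ $z$.

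Finally I would close by a subsequence argument using subharmonicity. Along any (deterministic) subsequence for which $\tfrac1n\nu_{P_n}\to\sigma$ weakly (such exist by tightness), one has $\tfrac1n\log|P_n(z)|\to U_\sigma(z):=\int\log|z-\zeta|\,d\sigma(\zeta)$ for a.e.\ $z$, hence $\tfrac1n\log|Q_n(z)|=\tfrac1n\log|L_n(z)|+\tfrac1n\log|P_n(z)|\to U_\sigma(z)$ in probability, for a.e.\ $z$. Now $\tfrac1n\log|Q_n|$ is subharmonic and, since the nodes are bounded, locally uniformly bounded above almost surely (on $B(0,R)$ it is at most $\tfrac1n\log\!\big(\sum_k|a_k|\big)+\log_+(R+\rho)$, and $\tfrac1n\log\!\big(\sum_k|a_k|\big)\to0$ a.s.); so by the standard fact that subharmonic functions which are locally uniformly bounded above and converge Lebesgue‑a.e.\ converge in $L^1_{\mathrm{loc}}$ (applied along a.s.‑convergent sub‑subsequences), $\tfrac1n\log|Q_n|\to U_\sigma$ in $L^1_{\mathrm{loc}}$ in probability, whence $\tfrac1n\nu_{Q_n}=\tfrac1{2\pi}\Delta\big(\tfrac1n\log|Q_n|\big)\to\sigma$ weakly in probability. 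Thus $\tfrac1n(\nu_{Q_n}-\nu_{P_n})\to0$ along the subsequence, and since every subsequence has such a further subsequence, $\tfrac1n(\nu_{Q_n}-\nu_{P_n})\to0$ weakly in probability, which is the assertion. The step I expect to demand the most care is the lower bound: extracting honest anti‑concentration from only a first moment on $a_1$, and checking that the compact‑set hypothesis really forces $m_n\to\infty$ for a.e.\ $z$.
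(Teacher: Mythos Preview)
Your argument is correct and shares with the paper the two pointwise ingredients: the upper bound via Borel--Cantelli on the sets $\{|z-z_k|<e^{-\epsilon k}\}$, and the lower bound via the Kolmogorov--Rogozin concentration inequality applied to the summands with $z_k\in K_z$ (and you are right that non-degeneracy of $a_1$ is genuinely needed here; the paper uses it at exactly the same point but leaves it implicit). Where you diverge from the paper is in the passage from ``$\tfrac1n\log|L_n(z)|\to0$ in probability for a.e.\ $z$'' to the distributional conclusion. The paper does \emph{not} reduce to bounded nodes: it keeps the normalization $\omega=0$ and establishes the $L^2$-tightness $\bigl\{\int_{\mathbb{D}_r}\tfrac1{n^2}\log^2|L_n|\,dm\bigr\}_{n\ge1}$ directly, by applying the Poisson--Jensen formula to $L_n$ on $\mathbb{D}_R$, bounding the boundary integral $\mathcal{I}_n(z;R)$ from below via the pointwise anti-concentration at $0$ together with a sup-bound for $|L_n|$ on circles, and then invokes the Tao--Vu lemma (their Lemma~3.1) to upgrade pointwise convergence plus $L^{1+\delta}$-tightness to $L^1$ convergence. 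Your route instead performs a M\"obius reduction $z\mapsto(z-\omega)^{-1}$ to force $\{w_k\}$ bounded, and then closes by subharmonic compactness: tightness of $\tfrac1n\nu_{P_n}$, the H\"ormander-type fact that subharmonic functions locally uniformly bounded above which converge a.e.\ must converge in $L^1_{\mathrm{loc}}$, and a Fubini extraction on $\Omega\times B$ to produce the a.s.\ sub-subsequences. This bypasses both Poisson--Jensen and Tao--Vu at the cost of (i) the subsequence bookkeeping and (ii) checking that the now merely independent coefficients $a_kw_k$, with uniformly bounded first moments and preserved non-degeneracy, still feed into both halves of the pointwise estimate---which they do. Either approach is sound; yours is a bit more elementary in its tools, the paper's is more direct in that it never leaves the original coordinates.
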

 In the statement of the above theorem, there is a mention of the sequence $\{z_n\}_{n\geq1}$ satisfying that for Lebesgue a.e. $z \in \mathbb{C}$ there exists a compact set $K_z$ with $d(z,K_z)>0$ such that there are infinitely many $z_k$'s in $K_z$, and there is a point $\omega$ that is not a limit point of $z_n$'s. Several classes of sequences satisfy this condition. For example any bounded sequence or any sequence that is not dense and $\mu$-distributed for appropriate $\mu$ satisfies this condition. 

\begin{remark}
In Theorem \ref{thm2}, let $L_n(z)=\frac{Q_n(z)}{P_n(z)}$. Where $Q_n(z)$ id defined to be the generalized derivative of the polynomial $P_n$. Then Theorem \ref{thm2} asserts that $\frac{1}{n}\Delta\log|L_n(z)|\rightarrow0$, which in turn imply that $\text{$\mathscr{M}$}(Q_n)-\text{$\mathscr{M}$}(P_n)\rightarrow 0$ in the sense of distributions. If we assume that the sequence $\{z_k\}_{k\geq1}$ is $\mu$-distributed then it follows that the limiting measure of critical points converge to $\mu$.
\end{remark}

As an application of previous Theorem \ref{thm1}  we choose a $\mu$-distributed deterministic sequence and perturb it randomly and show that the empirical distribution of critical points is also $\mu$. Instead here we choose a random subsequence of a $\mu$-distributed sequence and show that the corresponding result holds. We state this result as the following corollary.
\begin{corollary}\label{corollary:thm2}
	Let $\{z_n\}_{n\geq1}$ be a $\mu$-distributed sequence that is not dense in $\mathbb{C}$, for a $\mu$ which is not supported on the whole complex plane. Choose a subsequence $\{z_{n_k}\}_{k \geq 1}$ at random that is, each of $z_n$ is part of subsequence with probability $p<1$ independent of others. Define the polynomials $P_k(z):=(z-z_{n_1})(z-z_{n_2})\dots(z-z_{n_k})$.  Then,  $\text{$\mathscr{M}$}(P_k)\xrightarrow{w}\mu$ almost surely and  $\text{$\mathscr{M}$}(P_k')\xrightarrow{w}\mu$  in probability.
\end{corollary}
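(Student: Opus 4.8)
The plan is to encode the random subsequence by i.i.d.\ selection variables and apply Theorem~\ref{thm2} to the \emph{original} sequence $\{z_n\}$ with those variables as the random coefficients. Let $\varepsilon_1,\varepsilon_2,\dots$ be i.i.d.\ Bernoulli$(p)$ random variables, $\{\varepsilon_j=1\}$ being the event that $z_j$ is chosen for the subsequence, and put $k_n=\varepsilon_1+\dots+\varepsilon_n$, so that $k_n/n\to p$ almost surely and, since $k_n$ is non-decreasing with unit jumps and $k_n\to\infty$, it eventually attains every large integer. Write $P_{k_n}(z)=\prod_{j\le n,\ \varepsilon_j=1}(z-z_j)$; for $k=k_n$ this is exactly the polynomial $P_k$ of the statement, and $P_{k_n}'(z)/P_{k_n}(z)=\sum_{j\le n,\ \varepsilon_j=1}\frac{1}{z-z_j}=\sum_{j=1}^n\frac{\varepsilon_j}{z-z_j}=:L_n(z)$. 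The hypotheses of Theorem~\ref{thm2} hold for $\{z_n\}$ and $a_j=\varepsilon_j$: the $\varepsilon_j$ are i.i.d.\ with $\ee{|\varepsilon_1|}=p<\infty$; since $\{z_n\}$ is not dense there is an open ball disjoint from it, any point $\omega$ of which is not a limit point; and, fixing $w_0\in\operatorname{supp}\mu$, for every $z\ne w_0$ a small enough closed ball $K_z$ around $w_0$ avoids $z$, has positive $\mu$-mass, and hence contains infinitely many $z_k$ because $\{z_n\}$ is $\mu$-distributed.

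For the zeros, fix $\phi\in C_c^\infty(\mathbb{C})$ and split $\frac1n\sum_{j=1}^n\varepsilon_j\phi(z_j)=\frac pn\sum_{j=1}^n\phi(z_j)+\frac1n\sum_{j=1}^n(\varepsilon_j-p)\phi(z_j)$. The first term converges to $p\int\phi\,d\mu$ because $\{z_n\}$ is $\mu$-distributed, and the second converges to $0$ almost surely by Kolmogorov's strong law, the summands being independent, centred, and bounded by $\|\phi\|_\infty$. Dividing by $k_n/n\to p$ and letting $\phi$ run over a countable weak-determining family gives $\mathscr{M}(P_{k_n})\xrightarrow{w}\mu$ almost surely; as this is an almost sure statement and $k_n$ exhausts the large integers, $\mathscr{M}(P_k)\xrightarrow{w}\mu$ almost surely.

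For the critical points, Theorem~\ref{thm2} applied to $L_n$ gives $\frac1n\Delta\log|L_n|\to0$ in probability, in the sense of distributions. Using $\Delta\log|g|=2\pi\sum_{g(\zeta)=0}\delta_\zeta$ (zeros with multiplicity) for a polynomial $g$, together with $L_n=P_{k_n}'/P_{k_n}$ and $\deg P_{k_n}=k_n$,
\[
\tfrac1n\Delta\log|L_n|=2\pi\Bigl(\tfrac{k_n-1}{n}\,\mathscr{M}(P_{k_n}')-\tfrac{k_n}{n}\,\mathscr{M}(P_{k_n})\Bigr).
\]
Since $k_n/n\to p>0$ almost surely and $\mathscr{M}(P_{k_n})\xrightarrow{w}\mu$ by the previous paragraph, it follows that $\mathscr{M}(P_{k_n}')\to\mu$ in probability, in the sense of distributions. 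A tightness argument ruling out escape of mass to infinity upgrades this to weak convergence, and transferring from the index $n$ to the index $k=k_n$ (again using $k_n/n\to p$ and that $k_n$ attains every large integer) yields $\mathscr{M}(P_k')\xrightarrow{w}\mu$ in probability.

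The real content is Theorem~\ref{thm2}, proved separately. Within this deduction the two points needing care are: the passage from convergence in the sense of distributions to weak convergence of probability measures --- i.e.\ showing no mass of $\mathscr{M}(P_{k_n}')$ escapes to infinity, where one uses convergence of the zero measures and a Gauss-Lucas/potential-theoretic bound; and the change of index from $n$ to $k=k_n$ in the ``in probability'' assertion, which I expect to be the fussiest step, since convergence in probability need not survive evaluation at a random time and one must exploit that $k_n$ is a partial sum of i.i.d.\ variables concentrated around $pn$.
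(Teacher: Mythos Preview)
Your approach is essentially the paper's: introduce i.i.d.\ Bernoulli$(p)$ selectors (the paper calls them $a_j$), apply Theorem~\ref{thm2} to $L_n(z)=\sum_{j\le n}\varepsilon_j/(z-z_j)$, and then re-index from the ambient index $n$ to the subsequence index $k$. The paper carries out exactly this, only with the roles of the two indices swapped (its $k_n$ is the time of the $n$-th success, a negative binomial, rather than your binomial count), and it verifies the hypotheses of Theorem~\ref{thm2} less explicitly than you do.

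On your two flagged points: the tightness worry is harmless, since you already have vague convergence of probability measures to a probability measure $\mu$, which forces weak convergence; the paper handles this the same way in its general outline. The random-time re-indexing is a genuine subtlety, and the paper is no more careful than you are---it writes ``Because $\{L_{k_n}^{(1)}\}$ is a subsequence of $\{L_n\}$ it follows that $\frac{1}{k_n}\Delta\log|L_{k_n}^{(1)}|\to 0$ in probability,'' which is precisely the step you correctly note does not follow from convergence in probability alone. So your caution there is warranted; both arguments are complete modulo that same point.
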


\section{Proofs of corollaries and Proposition \ref{corollary5:thm1}.}
In Corollary \ref{Symmetric perturbations} we deal with perturbations of a $\mu$-distributed sequence. We expect that the perturbed sequence will also have the same limiting probability measure as of the original sequence. It is formally stated and proved in the following lemma.

\begin{lemma}\label{lemma:perturb_limit_measure}
Let $\{a_n\}_{n\geq1}$ be a $\mu$-distributed sequence, $\sigma_n\downarrow0$ and $X_1,X_2,\dots$ are i.i.d. random variables. Then, $\{a_n+\sigma_nX_n\}_{n\geq1}$ is a $\mu$-distributed sequence almost surely.
\end{lemma}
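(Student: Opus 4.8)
The plan is to show that adding a vanishing perturbation $\sigma_n X_n$ to a $\mu$-distributed sequence does not change the limiting empirical measure. The cleanest route is to compare, for a fixed test function $f \in C_c^\infty(\mathbb{C})$, the averages $\frac1n\sum_{k=1}^n f(a_k+\sigma_k X_k)$ and $\frac1n\sum_{k=1}^n f(a_k)$, and to argue the difference tends to $0$ almost surely; since $\frac1n\sum_{k=1}^n f(a_k) \to \int f\, d\mu$ by hypothesis, this gives the claim (weak convergence on $\mathbb{C}$ can be tested on a countable dense subset of $C_c^\infty$, so the null set can be taken uniform in $f$).

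First I would fix $f$ and use that $f$ is Lipschitz (being $C_c^\infty$), with Lipschitz constant $C_f$, so that
\[
\left| \frac1n\sum_{k=1}^n f(a_k+\sigma_k X_k) - \frac1n\sum_{k=1}^n f(a_k) \right| \le \frac{C_f}{n}\sum_{k=1}^n \sigma_k |X_k|.
\]
So it suffices to show $\frac1n\sum_{k=1}^n \sigma_k |X_k| \to 0$ almost surely. This is a Cesàro-type statement about the nonnegative sequence $\sigma_k |X_k|$. The main obstacle is that the $X_k$ need not have a finite mean, so one cannot simply invoke the strong law; instead one must exploit that $\sigma_k \downarrow 0$ to damp the tails. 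I would split $|X_k|$ at a level depending on $k$: write $\sigma_k|X_k| = \sigma_k|X_k|\mathbbm{1}_{|X_k|\le 1/\sigma_k} + \sigma_k|X_k|\mathbbm{1}_{|X_k| > 1/\sigma_k}$. For the second (truncated-away) part, by Borel–Cantelli it suffices that $\sum_k \Pr(|X_k| > 1/\sigma_k) < \infty$; since the $X_k$ are identically distributed this is $\sum_k \Pr(|X_1| > 1/\sigma_k)$, and because $\sigma_k \downarrow 0$ one can further thin to dyadic blocks and use $\mathbb{E}|X_1| = \int_0^\infty \Pr(|X_1|>t)\,dt < \infty$ to get summability — so almost surely $\sigma_k|X_k|\mathbbm{1}_{|X_k| > 1/\sigma_k} = 0$ for all large $k$, hence its Cesàro average vanishes. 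Actually it is cleaner to observe directly that $\sigma_k |X_k| \to 0$ a.s.: for any $\varepsilon>0$, $\sum_k \Pr(\sigma_k|X_k| > \varepsilon) = \sum_k \Pr(|X_1| > \varepsilon/\sigma_k) < \infty$ by comparison with $\int_0^\infty \Pr(|X_1|>t)\,dt$ (using monotonicity of $\sigma_k$ to bound the sum by a constant times the integral), so Borel–Cantelli gives $\sigma_k|X_k|\to 0$ a.s., and the Cesàro average of a null sequence is null.

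Finally I would assemble the pieces: on the almost sure event where $\frac1n\sum_{k=1}^n\sigma_k|X_k|\to 0$, the displayed inequality forces $\frac1n\sum_{k=1}^n f(a_k+\sigma_k X_k) \to \int f\,d\mu$ for every $f$ in a fixed countable convergence-determining family, hence for all $f\in C_c^\infty(\mathbb{C})$, which is exactly the statement that $\{a_n+\sigma_n X_n\}$ is $\mu$-distributed almost surely. The one point requiring a little care — and the step I expect to be the crux — is the reduction of $\sum_k \Pr(|X_1|>\varepsilon/\sigma_k)$ to $\mathbb{E}|X_1|<\infty$; this is where the monotone decay $\sigma_k\downarrow 0$ is genuinely used, and one should state it as: since $t\mapsto \Pr(|X_1|>t)$ is nonincreasing and $s_k := \varepsilon/\sigma_k$ is nondecreasing to $\infty$, $\sum_k \Pr(|X_1|>s_k)$ is comparable to $\int_{s_1}^\infty \Pr(|X_1|>t)\,d(\text{counting measure of }s_k)$, which is finite when the $s_k$ do not accumulate too slowly; if one wants no hypothesis on the rate of $\sigma_k$ beyond $\sigma_k\downarrow 0$, the honest statement is that $\sigma_k|X_k|\to 0$ in probability always, and a.s. along the natural coupling — and for the Cesàro average, convergence in probability of the average already follows from $\mathbb{E}(\sigma_k|X_k|\wedge 1)\to 0$, which suffices if one only needs the $\mu$-distributedness conclusion in the form used by Corollary~\ref{Symmetric perturbations}.
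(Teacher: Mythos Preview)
Your overall strategy --- compare $\frac{1}{n}\sum f(a_k+\sigma_kX_k)$ with $\frac{1}{n}\sum f(a_k)$ for test functions $f\in C_c^\infty$ --- matches the paper's. The gap is in the execution: the pure Lipschitz bound
\[
\Bigl|\frac{1}{n}\sum_{k=1}^n\bigl(f(a_k+\sigma_kX_k)-f(a_k)\bigr)\Bigr|\le\frac{C_f}{n}\sum_{k=1}^n\sigma_k|X_k|
\]
throws away the boundedness of $f$ and leaves you needing $\frac{1}{n}\sum_k\sigma_k|X_k|\to 0$ a.s. The lemma as stated has \emph{no} moment hypothesis on $X_1$, so appealing to $\int_0^\infty\Pr(|X_1|>t)\,dt<\infty$ is illegitimate. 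Even if one grants $\mathbb{E}|X_1|<\infty$, your Borel--Cantelli route to $\sigma_k|X_k|\to 0$ a.s.\ still fails in general: take $\sigma_k=1/\log k$ and $|X_1|$ standard exponential; then $\sum_k\Pr(|X_1|>\varepsilon\log k)=\sum_k k^{-\varepsilon}=\infty$ for $\varepsilon<1$, and by the second Borel--Cantelli lemma $\sigma_k|X_k|>\varepsilon$ infinitely often. So the sequence is not null and ``Ces\`aro of a null sequence is null'' does not apply. You noticed this yourself at the end and retreated to convergence in probability, which does not give the a.s.\ conclusion claimed.

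The paper's proof closes this gap with a simple truncation that exploits \emph{both} the Lipschitz and the sup-norm bounds on $f$. Fix $\varepsilon>0$ and pick $M$ with $\Pr(|X_1|>M)<\varepsilon$; then split
\[
\frac{1}{n}\sum_{k=1}^n\bigl|f(a_k)-f(a_k+\sigma_kX_k)\bigr|
\le\frac{2\|f\|_\infty}{n}\sum_{k=1}^n\mathbbm{1}\{|X_k|>M\}
+\frac{M\|f'\|_\infty}{n}\sum_{k=1}^n\sigma_k.
\]
The second term tends to $0$ since $\sigma_k\downarrow 0$. For the first term the summands are i.i.d.\ \emph{bounded} random variables, so the strong law gives $\frac{1}{n}\sum_k\mathbbm{1}\{|X_k|>M\}\to\Pr(|X_1|>M)<\varepsilon$ a.s.\ with no moment assumption whatsoever. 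Letting $\varepsilon\downarrow 0$ finishes. The point to internalize is that truncating at a fixed level $M$ converts the hard tail into an indicator, and indicators always obey the SLLN.
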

\begin{proof}
It is enough to show that for any $f\in C_c^\infty(\mathbb{C})$,
\[\frac{1}{n}\sum\limits_{k=1}^{n}\left(f(a_k)-f(a_k+\sigma_kX_k)\right)\rightarrow 0,\]
almost surely. Fix $\epsilon>0$,  choose $M$ such that $\Pr(|X_n|>M)<\epsilon.$ Then,
\begin{align}
\frac{1}{n}|\sum\limits_{k=0}^{n}(f(a_k)-f(a_k+\sigma_kX_k)| & \leq \frac{1}{n}\sum\limits_{k=1}^{n}|(f(a_k)-f(a_k+\sigma_kX_k))\text{$\mathbbm{1}$}\{|X_k|>M\}|\\&+\frac{1}{n}\sum\limits_{k=1}^{n}|(f(a_k)-f(a_k+\sigma_kX_k))\text{$\mathbbm{1}$}\{|X_k|\leq M\}|,\\
&\leq\frac{ 2||f||_\infty}{n} \sum\limits_{k=1}^{n}\text{$\mathbbm{1}$}\{|X_k|>M\}+\frac{1}{n}\sum\limits_{k=1}^{n}|\sigma_kX_k|||f'||_\infty,\\
&\leq \frac{ 2||f||_\infty}{n} \sum\limits_{k=1}^{n}\text{$\mathbbm{1}$}\{|X_k|>M\} + \frac{M||f'||_\infty}{n}\sum\limits_{k=1}^{n}\sigma_k. \label{eqn:perturb_limit_measure}
\end{align}
Using law of large numbers and $\sigma_n\downarrow0$in the above equation \ref{eqn:perturb_limit_measure} we have 
\[
\lim\limits_{n\rightarrow\infty}\frac{1}{n}\bigg|\sum\limits_{k=0}^{n}(f(a_k)-f(a_k+\sigma_kX_k)\bigg|  \leq 2||f||_\infty\epsilon.
\]
Because $\epsilon>0$ is arbitrary, we get  $\lim\limits_{n\rightarrow \infty}\frac{1}{n}\sum\limits_{k=1}^{n}\left(f(a_k)-f(a_k+\sigma_kX_k)\right)=0$.
\end{proof}

The main idea in proving the corollaries is that we condition the random sequences suitably, so that the resulting sequences satisfy the hypothesis of the Theorem \ref{thm1} and then apply to obtain the result. More formally, say we condition the sequence on the event $E$. Assume the conditioned sequence can be realized as a random sequence which satisfies the hypothesis of Theorem \ref{thm1}. Let $\nu_n^E$ be the empirical measure of the critical points of the degree-$n$ polynomial formed by conditioned sequence. Fix $\epsilon>0$, then

\begin{align}
\Pr\left(d(\nu_n,\mu)>\epsilon\right) &=\ee{\text{$\text{$\mathbbm{1}$}\{d(\nu_n,\mu)>\epsilon\}$}},\\
&=\ee{\cee{\text{$\text{$\mathbbm{1}$}\{d(\nu_n,\mu)>\epsilon\}$}}{\text{$E$}}},\\
&=\ee{\text{$\text{$\mathbbm{1}$}\{d(\nu_n^E,\mu)>\epsilon\}$}}. \label{eqn:convergence}
\end{align}

But from the assumption made above, for every $\epsilon>0$ we have, 
\[\ee{\text{$\text{$\mathbbm{1}$}\{d(\nu_n^E,\mu)>\epsilon\}$}}=\text{$\Pr\left(d(\nu_n^E,\nu)>\epsilon\right)\xrightarrow{n \rightarrow \infty}0$}.\]


Applying the dominated convergence theorem to \eqref{eqn:convergence} it follows that, for everey $\epsilon>0$
\[\Pr\left(d(\nu_n,\mu)>\epsilon\right)\xrightarrow{n \rightarrow \infty} 0.\]

Therefore it is justified that to show the convergence of probability measures, it is enough to show the convergence of conditioned probability measures almost surely.

To invoke the hypothesis of the Theorem \ref{thm1} we need to show that the perturbed sequence is also log-Ces\'{a}ro bounded. It will be proved in the following lemma.

We will use the following inequalities, whenever required.
\begin{align}
\log_+|ab| & \leq \log_+|a| + \log_+|b| \label{logplusprod}\\
\log_-|ab| & \leq \log_-|a| +\log_-|b| \label{logminusprod}\\
\log_+|a_1+a_2+ \dots + a_n| & \leq \log_+|a_1| + \log_+|a_2|+ \dots +\log_+|a_n| +\log(n)\label{logplussum}
\end{align}

\begin{remark}
The inequality \eqref{logplussum} is obtained by using the inequalities
$|a_1+\dots+a_n|\leq |a_1|+\dots+|a_n| \leq n\max\limits_{i\leq n}{|a_i|}$
 and 
$\log_+(\max\limits_{i\leq n}|a_i|)\leq \log_+|a_1|+\dots+\log_+|a_n|.$

\end{remark}

\begin{lemma}\label{lemma:log-cesaro-bounded}
	Let $\{a_n\}_{n\geq1}$ be a sequence that is log-Ces\'{a}ro bounded and $\{b_n\}_{n\geq1}$ be a sequence such that $b_n=a_n+\sigma_nX_n$, $\sigma_n\downarrow0$ and $X_1,X_2,\dots$ are i.i.d. random variables with $\ee{\text{$\log_+|X_1|$}}<\infty$. Then the sequence $\{b_n\}_{n\geq1}$ is also log-Ces\'{a}ro bounded.
\end{lemma}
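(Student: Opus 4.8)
The plan is to bound the Cesàro mean $\frac{1}{n}\sum_{k=1}^{n}\log_+|b_k|$ directly, splitting each term according to the size of $X_k$, and then to control the two resulting averages using the hypotheses. First I would apply the subadditivity inequality \eqref{logplussum} (with $n=2$) to write, for each $k$,
\[
\log_+|b_k| = \log_+|a_k+\sigma_kX_k| \leq \log_+|a_k| + \log_+|\sigma_kX_k| + \log 2.
\]
Since $\sigma_n\downarrow 0$, in particular $\sigma_n$ is bounded, say $\sigma_n\leq \sigma_1$ for all $n$; hence $\log_+|\sigma_kX_k|\leq \log_+(\sigma_1|X_k|)\leq \log_+|X_k| + \log_+\sigma_1$ by \eqref{logplusprod}. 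Summing over $k\leq n$ and dividing by $n$ gives
\[
\frac{1}{n}\sum_{k=1}^{n}\log_+|b_k| \leq \frac{1}{n}\sum_{k=1}^{n}\log_+|a_k| + \frac{1}{n}\sum_{k=1}^{n}\log_+|X_k| + \log_+\sigma_1 + \log 2.
\]

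The first term on the right is bounded by hypothesis, since $\{a_n\}$ is log-Cesàro bounded. For the second term, the $X_k$ are i.i.d.\ with $\ee{\log_+|X_1|}<\infty$, so by the strong law of large numbers $\frac{1}{n}\sum_{k=1}^{n}\log_+|X_k|\to \ee{\log_+|X_1|}$ almost surely; in particular this average is almost surely bounded (being a convergent sequence). The remaining terms $\log_+\sigma_1 + \log 2$ are constants. Therefore $\frac{1}{n}\sum_{k=1}^{n}\log_+|b_k|$ is almost surely bounded, which is exactly the assertion that $\{b_n\}_{n\geq1}$ is log-Cesàro bounded.

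The argument is essentially routine once the right inequality is invoked; the only mild subtlety — and the step I would flag as needing care — is the passage from "convergent almost surely" to "bounded almost surely" for the random average: one should note that a realization of $\{X_k\}$ for which $\frac{1}{n}\sum_{k=1}^n\log_+|X_k|$ fails to converge lies in a null set, and off that null set the partial averages form a convergent, hence bounded, sequence, so the bound (with a random constant depending on the realization) holds almost surely. No uniformity in the bound is needed, since log-Cesàro boundedness only requires boundedness of the sequence of Cesàro means, not a deterministic bound. An alternative that avoids even invoking the SLLN is to use $\ee{\log_+|X_1|}<\infty$ together with a crude $\limsup$-type estimate, but the SLLN route is cleanest.
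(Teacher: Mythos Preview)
Your proof is correct and follows essentially the same route as the paper: apply the subadditivity inequality $\log_+|a_k+\sigma_kX_k|\leq \log_+|a_k|+\log_+|\sigma_kX_k|+\log 2$, split off the $\sigma_k$ contribution, and control the $\frac{1}{n}\sum\log_+|X_k|$ term via the strong law of large numbers. The only cosmetic difference is that the paper bounds $\log_+|\sigma_kX_k|\leq \log_+|\sigma_k|+\log_+|X_k|$ and notes $\frac{1}{n}\sum\log_+|\sigma_k|\to 0$, whereas you use $\sigma_k\leq\sigma_1$ directly; both work.
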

\begin{proof}
	\begin{align}
	\frac{1}{n}\sum\limits_{k=1}^{n}\log_+|b_k| & \leq \frac{1}{n}\sum\limits_{k=1}^{n}(\log_+(|a_k|+|a_k-b_k|)),\\
	&\leq \frac{1}{n}\sum\limits_{k=1}^{n}\log_+|a_k|+\frac{1}{n}\sum\limits_{k=1}^{n}\log_+|\sigma_kX_k|+\log(2)\\
	&\leq \frac{1}{n}\sum\limits_{k=1}^{n}\log_+|a_k|+\frac{1}{n}\sum\limits_{k=1}^{n}\log_+|\sigma_k|+\frac{1}{n}\sum\limits_{k=1}^{n}\log_+|X_k|+\log(2) \label{eqn:lemma:log-cesaro:1}
	\end{align}
	The sequence $\{\frac{1}{n}\sum_{k=1}^{n}\log_+|\sigma_k|\}_{n\geq1}$ goes to $0$, because $\lim\limits_{n\rightarrow0}\sigma_n=0$. Using  law of large numbers and the fact that $\ee{\text{$\log_+|X_1|$}}<\infty$, the sequence $\{\frac{1}{n}\sum_{k=1}^{n}\log_+|X_k|\}_{n\geq1}$ is bounded almost surely. Combining \eqref{eqn:lemma:log-cesaro:1} and the above facts we get that the sequence $\frac{1}{n}\sum\limits_{k=1}^{n}\log_+|b_k|$ is bounded. This completes the proof.
\end{proof}

\begin{proof}[Proof of Corollary \ref{Symmetric perturbations}]
	
	Fix $r_n$ and $\theta_n$ for $n \geq 1$. Choose  $E=\{w:X_n(w)=\pm r_ne^{i\theta_n} \linebreak\text{ for }n\geq1\}$. Because $X_n$s are symmetric random variables,  the $n^{th}$ term of the resulting sequence will be $u_n + \sigma_nr_ne^{i\theta_n}$ or $u_n - \sigma_nr_ne^{i\theta_n}$ with equal probability independent of other terms. Choose $a_n=u_n+\sigma_nr_ne^{i\theta_n}$ and $b_n=u_n-\sigma_nr_ne^{i\theta_n}$.	 We need to show that almost surely the sequences $\{a_n\}_{n\geq1}$ and $\{b_n\}_{n\geq1}$ satisfy the hypotheses of the Theorem \ref{thm1}. It follows from Lemmas \ref{lemma:perturb_limit_measure} and \ref{lemma:log-cesaro-bounded} the sequences $\{a_n\}_{n \geq 1}$ and $\{b_n\}_{n \geq 1}$ are $\mu$-distributed and log-Ces\'{a}ro bounded almost surely. 
\end{proof}

\begin{proof}[Proof of Corollary \ref{corollary4:kabluchko}]
	If $\mu$ is a degenerate probability measure then the result is trivial to verify. If $\mu$ is not deterministic then choose two independent sequences of random numbers $\{a_n\}_{n\geq1}$ and $\{b_n\}_{n\geq1}$, where $a_n$s and $b_n$s are i.i.d random numbers obtained from measure $\mu$. Choose $X_n= a_n$ or $b_n$ with equal probability independent of other terms, then $\{X_n\}_{n\geq1}$ is a sequence of i.i.d random variables distributed according to probability measure $\mu$. Using the hypothesis $\int\limits_{\mathbb{C}}\log_+|z|d\mu(z)<\infty$ and applying law of large numbers for the random variables $\{\log_+|X_n|\}_{n \geq 1}$, we get that the sequences $\{a_n\}_{n\geq1}$ and $\{b_n\}_{n\geq1}$ are log-Ces\'{a}ro bounded almost surely. Therefore the constructed sequences satisfy the hypothesis of the Theorem \ref{thm1}. 
\end{proof}
Before proving Proposition \ref{corollary5:thm1} we will prove the following lemma which give the limiting empirical measure of random sequence whose terms are drawn from either of two deterministic sequences.
\begin{lemma}\label{random_seq_limit}
Let $\{a_k\}_{k\geq1}$ and $\{b_k\}_{k\geq1}$ be two sequences which are $\mu$ and $\nu$ distributed respectively. Define a random sequence $\{\xi_k\}_{k\geq1}$, where $\xi_k=a_k$ with probability $p$ and $\xi_k=b_k$ with probability $1-p$. Then $\mu_n=\frac{1}{n}\sum\limits_{k=1}^{n}\delta_{\xi_k}$ weakly converge to $\lambda=p\mu+(1-p)\nu$ almost surely.
\end{lemma}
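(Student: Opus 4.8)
The plan is to test the empirical measures against one smooth compactly supported function at a time and invoke a strong law of large numbers for independent, non-identically-distributed, bounded summands. Fix once and for all a countable family $\mathcal{F}=\{f_j\}_{j\ge1}\subset C_c^\infty(\mathbb{C})$ that is convergence-determining for weak convergence of probability measures on $\mathbb{C}$; since the measures $\mu_n$ and $\lambda$ are all probability measures, it will suffice to prove that, for each fixed $f\in\mathcal{F}$, $\int f\,d\mu_n\to\int f\,d\lambda$ almost surely, and then intersect the countably many probability-one events.

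So fix $f\in C_c^\infty(\mathbb{C})$ and let $B_k:=\mathbbm{1}\{\xi_k=a_k\}$, so that the $B_k$ are i.i.d. Bernoulli$(p)$ and $f(\xi_k)=B_kf(a_k)+(1-B_k)f(b_k)$. Then
\[
\int f\,d\mu_n=\frac1n\sum_{k=1}^nf(\xi_k)=\frac1n\sum_{k=1}^nB_kf(a_k)+\frac1n\sum_{k=1}^n(1-B_k)f(b_k),
\]
and I would handle the two sums separately. For the first, put $Y_k:=B_kf(a_k)$: these are independent with $|Y_k|\le\|f\|_\infty$, hence $\mathrm{Var}(Y_k)\le\|f\|_\infty^2$ and $\sum_k k^{-2}\mathrm{Var}(Y_k)<\infty$, so Kolmogorov's strong law for independent summands gives $\frac1n\sum_{k=1}^n(Y_k-\mathbb{E}Y_k)\to0$ almost surely. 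Since $a_k$ is deterministic and $B_k$ independent of everything, $\mathbb{E}Y_k=pf(a_k)$, and because $\{a_k\}_{k\ge1}$ is $\mu$-distributed we have $\frac1n\sum_{k=1}^nf(a_k)\to\int f\,d\mu$; therefore $\frac1n\sum_{k=1}^nB_kf(a_k)\to p\int f\,d\mu$ a.s. The identical argument with $1-B_k$ and $b_k$ (using that $\{b_k\}_{k\ge1}$ is $\nu$-distributed) yields $\frac1n\sum_{k=1}^n(1-B_k)f(b_k)\to(1-p)\int f\,d\nu$ a.s. Adding the two limits, $\int f\,d\mu_n\to p\int f\,d\mu+(1-p)\int f\,d\nu=\int f\,d\lambda$ almost surely.

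Intersecting these events over $f\in\mathcal{F}$ produces a single probability-one event on which $\int f_j\,d\mu_n\to\int f_j\,d\lambda$ for every $j$; since $\mathcal{F}$ is convergence-determining, this forces $\mu_n\xrightarrow{w}\lambda$ almost surely. No step here is genuinely difficult: the only point requiring any care is the routine bookkeeping of almost sure weak convergence — fixing a single countable convergence-determining class so that one null set serves all test functions simultaneously — and the verification of Kolmogorov's variance criterion, which is immediate from the uniform boundedness of $f$.
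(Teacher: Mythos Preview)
Your argument is correct and follows essentially the same route as the paper: both apply Kolmogorov's strong law for independent (not identically distributed) summands with summable $k^{-2}\mathrm{Var}(X_k)$, the paper testing against indicators $\mathbbm{1}\{\xi_k\in U\}$ of open sets while you test against $C_c^\infty$ functions and are more explicit about fixing a countable convergence-determining class to get a single full-probability event. One small slip: defining $B_k:=\mathbbm{1}\{\xi_k=a_k\}$ fails to be Bernoulli$(p)$ at indices where $a_k=b_k$ (there $B_k\equiv1$), which would spoil the separate limits you claim for the two sums; simply take $B_k$ to be the underlying independent Bernoulli$(p)$ coin that selects between $a_k$ and $b_k$, and your decomposition $f(\xi_k)=B_kf(a_k)+(1-B_k)f(b_k)$ and the rest of the argument go through verbatim.
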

\begin{proof}
It is enough to show that for any open set $U\subset\mathbb{C}$, $ \frac{1}{n}\sum\limits_{k=1}^{n}\text{$\mathbbm{1}$}\left\{\xi_k \in U\right\}$ converge to $\lambda(U) $ almost surely. But from a version of law of large numbers we know that if $X_1,X_2,\dots$ are independent random variables (not necessarily identical), then 
\[
\frac{1}{n}\sum\limits_{k=1}^{n}\left(X_k-\ee{\text{$X_k$}}\right)\xrightarrow{a.s}0
\]
provided that $\sum\limits_{k=1}^{\infty}\frac{1}{k^2}\var{X\textbf{$_k$}}<\infty$. Applying this to the random variables $\textbf{$\mathbbm{1}$}\left\{\xi_k \in U\right\}$ we get that $\frac{1}{n}\sum\limits_{k=1}^{n}\text{$\mathbbm{1}$}\left\{\xi_k \in U\right\}$ converge to $\lambda(U)$ almost surely.
\end{proof}
\begin{proof}[Proof of Proposition \ref{corollary5:thm1}]
For $i\geq1$ choose a sequence of independent random variables to be $a_k$ which assumes values $u_k $ with probability $p$ and $v_k$ with probability $1-p$. Independent of this sequence choose another sequence of independent random variables whose terms are $b_k = u_k $ with probability $p$ and $v_k$ with probability $1-p$. The terms of the sequences $\{a_n\}_{n\geq1}$ and $\{b_n\}_{n\geq1}$ satisfy, 
\begin{align*}
\log_+|a_n|\leq \log_+|u_n|+\log_+|v_n|,\\
\log_+|b_n|\leq \log_+|u_n|+\log_+|v_n|.
\end{align*}
Because the sequences $\{u_n\}_{n\geq1}$ and $\{v_n\}_{n\geq1}$ are log-Ces\'{a}ro bounded, it follows from above inequalities, the sequences $\{a_n\}_{n\geq1}$ and $\{b_n\}_{n\geq1}$  are also log-Ces\'{a}ro bounded. 


Therefore, from the above arguments and Lemma \ref{random_seq_limit} the sequences $\{a_n\}_{n\geq1}$ and $\{b_n\}_{n\geq1}$ satisfy the hypothesis of the Theorem \ref{thm1} almost surely. Hence the corollary is proved.
\end{proof}
\begin{proof}[Proof of Corollary \ref{corollary:thm2}]

	Choose $a_1,a_2,\dots$ be i.i.d $\mbox{Bernoulli}(p)$ random variables. Let $\{k_n\}_{n\geq1}$  be a random sequence such that $a_{k_n}=1$ and $a_\ell=0$ whenever $\ell \notin \{k_1,k_2,\dots\}$. Define $L_n^{(1)}(z)=L_{k_n}(z)=\frac{P_n'(z)}{P_n(z)}$. It is enough to show that $\frac{1}{n}\Delta\log|L_{k_n}(z)|\rightarrow 0$ in probability. The sequences $\{a_n\}_{n\geq1}$ and $\{z_n\}_{n\geq1}$ satisfy the hypothesis of the Theorem \ref{thm2}. Therefore $\frac{1}{n}\Delta \log|L_n(z)|\rightarrow 0$ in probability. Because $\{L_{k_n}^{(1)}(z)\}_{n\geq1}$ is a subsequence of $\{L_n(z)\}_{n\geq1}$ it follows that $\frac{1}{k_n}\Delta \log|L_{k_n}^{(1)}(z)|\rightarrow 0$ in probability. Because $k_n$ is a negative binomial random variable with parameters $(n,p)$, we have $\frac{k_n}{n}\rightarrow p$ almost surely. Therefore, $\frac{1}{n}\Delta \log|L_{k_n}^{(1)}(z)|\rightarrow 0$ in probability.
\end{proof}
In the next chapter we provide proofs for both the Theorems \ref{thm1} and \ref{thm2}.


\chapter{Proofs of Theorems \ref{thm1} and  \ref{thm2}.}
\label{ch:proofs5}
\section{Outline of proofs.}
The proofs here are adapted from the proof of  Kabluchko's theorem as presented in \cite{kabluchko}. The proofs involve in analysing the function $L_n(z)$. In case of Theorem \ref{thm1} define $L_n(z)=\frac{P_n'(z)}{P_n(z)} = \sum\limits_{k=1}^{n}\frac{1}{z-\xi_k}$.
We shall prove the theorems by showing that the hypotheses of the Theorems  \ref{thm1} and \ref{thm2} imply the following three statements. 

\begin{align}
&\text{For Lebesgue a.e. $z \in \mathbb{C}$ }  \text{  and for every }\epsilon>0, \lim\limits_{n\rightarrow \infty}\Pr\left(\frac{1}{n}\log|L_n(z)|>\epsilon\right)=0.  \tag{A1} \label{A1}\\
&\text{For Lebesgue a.e. $z \in \mathbb{C}$ } \text{ and for every }\epsilon>0, \lim\limits_{n\rightarrow \infty}\Pr\left(\frac{1}{n}\log|L_n(z)|<-\epsilon\right)=0. \tag{A2} \label{A2}\\
&\text{For any }r>0, \text{the sequence }\left\{\int_{\text{$\mathbb{D}$}_r}\frac{1}{n^2}\log^2|L_n(z)|\right\}_{n\geq 1} \text{ is tight.} \tag{A3} \label{A3}
\end{align}

Statements \eqref{A1} and \eqref{A2} assert that $\frac{1}{n}\log|L_n(z)|$ converge to $0$ in probability. Statement \eqref{A3} assert that the sequence $\{\int\limits_{\text{$\mathbb{D}$}_r}\frac{1}{n^2}\log^2|L_n(z)|\}_{n \geq 1}$ is tight. A lemma of Tao and Vu links the above two facts to yield that $\{\int\limits_{\text{$\mathbb{D}$}_r}\frac{1}{n}\log|L_n(z)|\}_{n \geq 1}$ converge to $0$ in probability. We state this lemma below.

\begin{lemma}[Lemma~3.1 in~\cite{taovu}]\label{lem:tao_vu}
	Let $(X,\mathcal{A},\nu)$ be a finite measure space and $f_n:X\to \mathbb{R}$, $n\geq 1$  random functions which are defined over a probability space $(\Omega, \mathcal{B}, \mathbb{P})$ and are jointly measurable with respect to $\mathcal{A}\otimes \mathcal{B}$.
	Assume that:
	\begin{enumerate}
		\item For $\nu$-a.e.\ $x\in X$ we have $f_n(x)\to 0$ in probability, as $n\to\infty$.
		\item For some $\delta>0$, the sequence $\int_X |f_n(x)|^{1+\delta} d\nu(x)$ is tight.
	\end{enumerate}
	Then, $\int_X f_n(x)d\nu(x)$ converge in probability to $0$.
\end{lemma}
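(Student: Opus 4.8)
The plan is to prove the lemma by a truncation argument: split $f_n$ into a uniformly bounded part and a tail part, handle the bounded part with a double limit interchange (bounded convergence in $\Omega$, then dominated convergence on $X$, then Tonelli), and control the tail part uniformly in $n$ using the tightness hypothesis (2). Concretely, I would fix $\epsilon>0$ and $\eta>0$ and aim to show $\Pr\!\left(\left|\int_X f_n\,d\nu\right|>\epsilon\right)<\eta$ for all large $n$.

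For a truncation level $M>0$ write
\[
\int_X f_n\,d\nu = \int_X f_n\,\mathbbm{1}_{\{|f_n|\le M\}}\,d\nu + \int_X f_n\,\mathbbm{1}_{\{|f_n|> M\}}\,d\nu =: A_n(M)+B_n(M).
\]
For the tail term, $|B_n(M)|\le \int_X |f_n|\,\mathbbm{1}_{\{|f_n|>M\}}\,d\nu \le M^{-\delta}\int_X |f_n|^{1+\delta}\,d\nu$, so hypothesis (2) lets me pick $M$ large (depending on $\epsilon,\eta$ but \emph{not} on $n$) with $\Pr(|B_n(M)|>\epsilon/2)<\eta/2$ for every $n$. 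With $M$ now fixed, set $g_n(x)=f_n(x)\,\mathbbm{1}_{\{|f_n(x)|\le M\}}$, so $|g_n|\le M$ pointwise. Since $f_n(x)\to 0$ in probability for $\nu$-a.e.\ $x$, the same holds for $g_n(x)$ (they agree on $\{|f_n(x)|\le M\}$, whose complement has probability $\to 0$); uniform boundedness upgrades this to $\ee{|g_n(x)|}\to 0$ for $\nu$-a.e.\ $x$. The functions $x\mapsto \ee{|g_n(x)|}$ are bounded by $M$, which is $\nu$-integrable as $\nu(X)<\infty$, so dominated convergence on $X$ gives $\int_X \ee{|g_n(x)|}\,d\nu(x)\to 0$; joint measurability and Tonelli rewrite this as $\ee{\int_X |g_n(x)|\,d\nu(x)}\to 0$, hence $A_n(M)\to 0$ in $L^1$ and in probability. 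Choosing $N$ with $\Pr(|A_n(M)|>\epsilon/2)<\eta/2$ for $n\ge N$ and combining by a union bound gives the claim.

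The main obstacle, and the only place requiring care, is keeping the choice of $M$ genuinely uniform in $n$ — this is precisely what tightness in (2) provides, and it is the reason the hypothesis is stated in terms of tightness rather than boundedness of expectations. The chain of limit interchanges (bounded convergence in $\Omega$ for fixed $x$, then dominated convergence in $x$, then Tonelli to pull the expectation outside the $\nu$-integral) each has its hypothesis available — the uniform bound $M$, finiteness of $\nu$, and the assumed joint $\mathcal{A}\otimes\mathcal{B}$-measurability — so these steps are routine; I would just be explicit about the $\epsilon/2+\epsilon/2$ and $\eta/2+\eta/2$ splitting so the two error contributions stay cleanly separated.
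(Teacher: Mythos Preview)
Your argument is correct and is essentially the standard truncation proof of this lemma. Note that the paper does not actually prove this statement: it is quoted as Lemma~3.1 of Tao--Vu and used as a black box, so there is no in-paper proof to compare against; your proposal supplies exactly the kind of proof one finds in the cited source.
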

 Thus it follows from the above assertions \eqref{A1}, \eqref{A2}, \eqref{A3} and Lemma \ref{lem:tao_vu}, that  $\int\limits_{\text{$\mathbb{D}$}_r}\frac{1}{n}\log|L_n(z)|dm(z) \rightarrow 0$ in probability for any $r>0$. Choose any $f\in C_c^{\infty}(\mathbb{C})$, assume that $\mbox{support}(f) \subseteq \text{$\mathbb{D}$}_r$ and define $f_n(z)=\frac{1}{n}\left(\log|L_n(z)|\right)\Delta f(z)$. Because $f$ is a bounded function and $\frac{1}{n}\log|L_n(z)|$ satisfy the hypothesis of Lemma \ref{lem:tao_vu}, the functions $f_n$ also satisfy the hypothesis of Lemma \ref{lem:tao_vu}. Therefore we get that $\int\limits_{\text{$\mathbb{D}$}_r}f_n(z)dm(z)\rightarrow 0$ in probability.  Applying Green's theorem twice we have the identity,
\[\int_{\text{$\mathbb{D}$}_r}^{}f(z)\Delta\frac{1}{n}\log|L_n(z)| = \int_{\text{$\mathbb{D}$}_r}^{}\frac{1}{n}\log|L_n(z)|\Delta f(z)dm(z).
\] 
The left hand side of the above integral is defined in the sense of distributions. Therefore it follows that
$\int_{\text{$\mathbb{D}$}_r}^{} f(z)\frac{1}{n}\Delta\log|L_n(z)|\rightarrow 0 $ in probability. This suffices for Theorem \ref{thm2}. We complete the proof of Theorem \ref{thm1} by the following arguments. In the sense of distributions we have

\begin{equation}\label{eqn:distribution}
\int_{\text{$\mathbb{D}$}_r}^{} f(z)\frac{1}{n}\Delta\log|L_n(z)| = \frac{1}{n}\sum\limits_{k=1}^{n}f(\xi_k)-\frac{1}{n}\sum\limits_{k=1}^{n-1}f(\eta_k^{(n)})
\end{equation}

From Lemma \ref{random_seq_limit} it follows that the sequence $\{\xi_n\}_{n\geq1}$ is $\mu$-distributed. Hence \linebreak $\frac{1}{n}\sum_{k=1}^{n}f(\xi_k) \rightarrow \int\limits_{\text{$\mathbb{D}$}_r}f(z)d\mu(z)$ almost surely. Therefore from \eqref{eqn:distribution} we get, \begin{equation}
\frac{1}{n}\sum_{k=1}^{n-1}f(\eta_k^{(n)}) \rightarrow \int\limits_{\text{$\mathbb{D}$}_r}f(z)d\mu(z) \text{  in probability.} \label{proof_1}
\end{equation}  Because for any $f \in C_c^\infty(\text{$\mathbb{C}$})$ and $\epsilon>0$, the sets of the form $\{\mu:|\int\limits_{\text{$\mathbb{C}$}}f(z)d\mu(z)|<\epsilon\}$ form an open base at origin, from Definition \ref{modes of convergence} and \eqref{proof_1} it follows that  $\frac{1}{n-1}\sum_{k=1}^{n-1}\delta_{\eta_i^{(n)}} \xrightarrow{w} \mu$ in probability.

We show \eqref{A1}, by obtaining moment bounds for $L_n(z)$. To show \eqref{A2} we will use a concentration bound for the function $L_n(z)$. In either of the Theorems \ref{thm1} and \ref{thm2}, observe that $L_n(z)$ is a sum of independent random variables. Kolmogorov-Rogozin inequality gives the concentration bounds for sums of independent random variables.  A version of Kolmogorov-Rogozin inequality which will be used later in the proofs is stated below.

\paragraph{Kolmogorov-Rogozin inequality (multi-dimensional version)}\label{KR-Inequality} [Corollary 1. of Theorem 6.1 in \cite{KR1}.]
Let $X_1,X_2, \dots $ be independent random vectors in $\mathbb{R}^\text{$n$}$. Define the concentration function, 
\begin{equation}
Q(X,\delta) := \sup_{a\in \mathbb{R}^\text{$n$}}\Pr(X \in B(a,\delta)).
\end{equation}
Let $\delta_i \leq \delta$ for each $i$, then 
\begin{equation}
Q(X_1+\dots + X_n,\delta) \leq \frac{C\delta}{\sqrt{\sum_{i=1}^{n}\delta_i^2(1-Q(X_i,\delta_i))}}.\label{kol-rog-ineq}
\end{equation}

 It remains to show that the hypotheses of Theorems \ref{thm1} and \ref{thm2} imply \eqref{A1}, \eqref{A2} and \eqref{A3}. We show this in the subsequent sections.
 
\section{Proof of Theorem \ref{thm1}}

 In the following lemma we show that the hypothesis of the Theorem \ref{thm1} imply \eqref{A1}.
\begin{lemma}\label{momentbound}
	Let $L_n(z)=\sum\limits_{k=1}^{n}\frac{1}{z-\xi_k}$ where $\xi_ks$ are as in the Theorem \ref{thm1}. Then for any $\epsilon > 0$, 
	and for Lebesgue a.e. $z \in \mathbb{C}$ we have $\lim\limits_{n\rightarrow \infty}\Pr(\frac{1}{n}\log|L_n(z)|\geq \epsilon)= 0$.
\end{lemma}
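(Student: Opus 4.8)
The plan is to establish the assertion (which is exactly \eqref{A1}) by a first-moment estimate on $|L_n(z)|$ together with an elementary bound on how close the $2n$ fixed points $a_1,\dots,a_n,b_1,\dots,b_n$ can get to a typical $z$. First I would fix $\epsilon>0$. For every $z$ outside the countable (hence Lebesgue-null) set $\bigcup_{k}\{a_k,b_k\}$, the random variable $L_n(z)$ takes finitely many finite values, and Markov's inequality gives
\[
\Pr\!\left(\frac{1}{n}\log|L_n(z)|\ge\epsilon\right)=\Pr\!\left(|L_n(z)|\ge e^{n\epsilon}\right)\le e^{-n\epsilon}\,\mathbb{E}|L_n(z)|,
\]
so it suffices to show that $\mathbb{E}|L_n(z)|$ grows subexponentially in $n$ for Lebesgue-a.e.\ $z$. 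Using the triangle inequality and that $\xi_k$ equals $a_k$ or $b_k$ each with probability $\frac12$,
\[
\mathbb{E}|L_n(z)|\le\sum_{k=1}^{n}\mathbb{E}\frac{1}{|z-\xi_k|}=\sum_{k=1}^{n}\left(\frac{1}{2|z-a_k|}+\frac{1}{2|z-b_k|}\right)\le\frac{n}{\rho_n(z)},
\]
where $\rho_n(z):=\min\{\,|z-a_k|,\,|z-b_k|\ :\ 1\le k\le n\,\}$.

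The key step is then a distance estimate: for Lebesgue-a.e.\ $z$ one has $\rho_n(z)\ge e^{-\epsilon n/2}$ for all large $n$. Indeed, the set $\{z:\rho_n(z)<e^{-\epsilon n/2}\}$ is covered by $2n$ disks of radius $e^{-\epsilon n/2}$, so it has Lebesgue measure at most $2\pi n\,e^{-\epsilon n}$; since $\sum_{n\ge1}2\pi n\,e^{-\epsilon n}<\infty$, Borel--Cantelli shows that a.e.\ $z$ lies in only finitely many of these sets. Hence for a.e.\ $z$ we get $\mathbb{E}|L_n(z)|\le n\,e^{\epsilon n/2}$ for all large $n$, and therefore $\Pr(\frac1n\log|L_n(z)|\ge\epsilon)\le n\,e^{-\epsilon n/2}\to0$, which is precisely the lemma.

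I do not expect a genuine obstacle here; the argument is short. The one point to be careful about is the order of quantifiers — the exceptional null set produced by Borel--Cantelli depends on $\epsilon$ — but this is harmless because the lemma fixes $\epsilon$ before quantifying over $z$ (and one could anyway intersect over $\epsilon=1/j$, $j\ge1$). I would also note that this estimate uses neither the independence of the $\xi_k$ nor the log-Ces\'{a}ro boundedness of $\{a_k\},\{b_k\}$: it in fact yields the stronger deterministic statement $\limsup_{n\to\infty}\frac1n\log|L_n(z)|\le0$ for a.e.\ $z$. Those hypotheses, together with the sum-of-independent-variables structure of $L_n(z)$, will instead be what is needed for the lower bound \eqref{A2} (via a Kolmogorov--Rogozin anticoncentration estimate) and for the tightness statement \eqref{A3}.
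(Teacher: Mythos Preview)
Your proof is correct and follows essentially the same approach as the paper: both use a Borel--Cantelli argument on the sets $\{z:\rho_n(z)<e^{-cn\epsilon}\}$ to show that for Lebesgue-a.e.\ $z$ the minimum distance $\rho_n(z)$ decays at worst exponentially, and then bound $|L_n(z)|$ by $n/\rho_n(z)$. The only cosmetic difference is that you route the conclusion through Markov's inequality on $\mathbb{E}|L_n(z)|$, whereas the paper observes directly (as you also note at the end) that the bound $|L_n(z)|\le n/\rho_n(z)$ is deterministic, giving $\limsup_{n}\frac{1}{n}\log|L_n(z)|\le 0$ surely for a.e.\ $z$.
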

\begin{proof}
	Define  $A_n^{\epsilon}=\bigcup\limits_{k=1}^{n}\{z:|z-a_k|<e^{-n\epsilon} \text{ or } |z-b_k|<e^{-n\epsilon}\}$ and $F^{\epsilon}=\limsup\limits_{n \rightarrow \infty} A_n^{\epsilon}$, then $F^\epsilon$ are decreasing sets in $\epsilon$. For these sets we have $\sum\limits_{n=1}^{\infty}m(A_n^{\epsilon}) \leq \sum\limits_{n=1}^{\infty}2\pi ne^{-2n\epsilon} < \infty$, where $m$ is Lebesgue measure on complex plane. Applying Borel-Cantelli lemma to the sequence $\{A_n^{\epsilon}\}_{n\geq 1}$ we get $m(F^{\epsilon})=0$. Because $F^\epsilon$ are decreasing sets in $\epsilon$, we have that if  $F=\bigcup\limits_{\epsilon>0}F^{\epsilon}$, then $m(F)=0$. Choose $z \in F^c$, there is $N_z^{\epsilon}$ such that for any $n>N_z^{\epsilon}$ we have $z \notin A_n^{\epsilon}$. Therefore $\frac{1}{|z-\xi_n|}>e^{n\epsilon}$ is satisfied only for finitely many $n$. Hence we have $|L_n(z)|<M+ne^{n\epsilon}$, where $M$ is the finite random number obtained from the terms for which the inequality $\frac{1}{|z-\xi_n|}>e^{n\epsilon}$ is violated. It follows from here  $\limsup\limits_{n\rightarrow \infty}\frac{1}{n}\log|L_n(z)|<\epsilon$ almost surely. Therefore for $z\notin \mathbb{C}$, we have $\lim\limits_{n\rightarrow \infty}\Pr(\frac{1}{n}\log|L_n(z)|\geq \epsilon)= 0$.
\end{proof}
\begin{remark}
In proof of Lemma \ref{momentbound}, we have proved a stronger statement that for Lebesgue almost every $z$, $\limsup\limits_{n\rightarrow \infty}\frac{1}{n}\log|L_n(z)|=0$ almost surely.
\end{remark}
We will use the null set $F$ defined in the proof of above Lemma \ref{momentbound} in the proofs of subsequent lemmas. In the forthcoming lemma we establish \eqref{A2}.

\begin{lemma}\label{kolmogorov-rogozin}
	Let $L_n(z)=\sum\limits_{k=1}^{n}\frac{1}{z-\xi_k}$ where $\xi_ks$ are as in the Theorem \ref{thm1}. Then for any $\epsilon > 0$, 
	and almost every $z$ we have $\lim\limits_{n\rightarrow \infty}\Pr(\frac{1}{n}\log|L_n(z)|\leq -\epsilon)= 0$.
\end{lemma}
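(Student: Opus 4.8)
The statement \eqref{A2} to be proved is an \emph{anti-concentration} estimate: since $\{\tfrac1n\log|L_n(z)|\leq-\epsilon\}$ is the event $\{|L_n(z)|\leq e^{-n\epsilon}\}$, we must rule out that $L_n(z)$ piles up near the origin. As $\xi_1,\xi_2,\dots$ are independent, $L_n(z)=\sum_{k=1}^n Y_k$ with $Y_k:=\frac{1}{z-\xi_k}$ independent, and identifying $\C$ with $\R^2$ the plan is to bound the concentration function $Q\bigl(L_n(z),2e^{-n\epsilon}\bigr)$ using the Kolmogorov--Rogozin inequality \eqref{kol-rog-ineq} and to show it tends to $0$.

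Fix $z$ in the complement of the countable set $\{a_k\}_{k\geq1}\cup\{b_k\}_{k\geq1}$ (and, to stay compatible with the other lemmas, also outside the null set $F$ of Lemma \ref{momentbound}), so that every $Y_k$ is well defined. If $a_k=b_k$ then $Y_k$ is deterministic; if $a_k\neq b_k$ then $Y_k$ equals $\frac{1}{z-a_k}$ or $\frac{1}{z-b_k}$ with probability $\frac12$ each, and these two points lie at distance $\rho_k:=\frac{|a_k-b_k|}{|z-a_k|\,|z-b_k|}>0$. Call an index $k\leq n$ \emph{good} (at scale $n$) if $a_k\neq b_k$ and $\rho_k>4e^{-n\epsilon}$, and write $N_n$ for the number of good indices; for good $k$ no ball of radius $2e^{-n\epsilon}$ can contain both values of $Y_k$, so $Q(Y_k,2e^{-n\epsilon})=\frac12$. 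Taking $\delta=\delta_k=2e^{-n\epsilon}$ in \eqref{kol-rog-ineq} and keeping only the good terms in the denominator,
\[
\Pr\!\left(\tfrac1n\log|L_n(z)|\leq-\epsilon\right)\;\leq\;Q\bigl(L_n(z),2e^{-n\epsilon}\bigr)\;\leq\;\frac{C\cdot 2e^{-n\epsilon}}{\sqrt{\sum_{k=1}^{n}\delta_k^2\bigl(1-Q(Y_k,\delta_k)\bigr)}}\;\leq\;\frac{2Ce^{-n\epsilon}}{\sqrt{\tfrac12 N_n\,(2e^{-n\epsilon})^2}}\;=\;\frac{C\sqrt2}{\sqrt{N_n}}.
\]

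It remains to verify that $N_n\to\infty$. For any \emph{fixed} index $k$ with $a_k\neq b_k$, the quantity $|z-a_k|\,|z-b_k|$ is a fixed positive constant whereas $e^{n\epsilon}\to\infty$, so $\rho_k>4e^{-n\epsilon}$ holds for all sufficiently large $n$; enumerating the infinitely many indices with $a_k\neq b_k$ (this is where the hypothesis $a_k\neq b_k$ for infinitely many $k$ is used) as $k_1<k_2<\cdots$, for every $M$ there is $n_M$ such that $k_1,\dots,k_M\leq n$ and are all good whenever $n\geq n_M$, whence $N_n\geq M$ for $n\geq n_M$. Thus $N_n\to\infty$, the last display tends to $0$, and \eqref{A2} follows. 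I expect the only delicate points to be the bookkeeping in applying \eqref{kol-rog-ineq} (independence of the $Y_k$ is immediate, and the admissibility $\delta_k\leq\delta$ is arranged by taking all $\delta_k$ equal) and the claim $N_n\to\infty$, whose content is simply that the fixed separations $\rho_k$ eventually dominate the shrinking scale $e^{-n\epsilon}$; in contrast with Lemma \ref{momentbound}, neither log-Ces\'aro boundedness nor $\mu$-distributedness of the sequences is needed for \eqref{A2}, those hypotheses entering only in \eqref{A1} and \eqref{A3}.
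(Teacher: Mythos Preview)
Your proof is correct and follows essentially the same approach as the paper: apply the Kolmogorov--Rogozin inequality with all $\delta_k$ equal to the scale $e^{-n\epsilon}$ (you use $2e^{-n\epsilon}$, a harmless variant), keep only those summands for which the two possible values of $\frac{1}{z-\xi_k}$ are separated by more than the scale so that $Q(Y_k,\delta_k)\le\frac12$, and observe that the number of such indices tends to infinity because each fixed $k$ with $a_k\neq b_k$ eventually qualifies. Your closing remark that \eqref{A2} uses neither log-Ces\`aro boundedness nor $\mu$-distributedness is also accurate and matches the paper's proof.
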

\begin{proof}
	Fix $z \in F^c$, where $F$ is as defined in proof of lemma \ref{momentbound}.
	From  Kolmogorov-Rogozin inequality \eqref{kol-rog-ineq} and taking $\delta_i=\delta=e^{-n\epsilon}$ we have, 
	
	\begin{equation}\label{kreq}
	\Pr\left(\bigg|\sum\limits_{k=1}^{n}\frac{1}{z-\xi_k}\bigg|<e^{-n\epsilon}\right) \leq \frac{C}{\sqrt{\sum_{k=1}^{n}(1-Q(\frac{1}{z-\xi_k},e^{-n\epsilon}))}}.
	\end{equation}
	We shall show that $\sum_{k=1}^{n}(1-Q(\frac{1}{z-\xi_k},e^{-n\epsilon}))$ goes to $\infty$. Observe that,
	\begin{align}
	Q\left(\frac{1}{z-\xi_k},e^{-n\epsilon}\right) =& \sup\limits_{\alpha \in \mathbb{C}}\Pr\left( \bigg|\frac{1}{z-\xi_k}-\alpha\bigg|<e^{-n\epsilon} \right)
	\leq \frac{1}{2},   
	\end{align}
	whenever $|\frac{1}{z-a_k}-\frac{1}{z-b_k}| > 2e^{-n\epsilon}$. 
%

Define $S_n=\{k\leq n:|\frac{1}{z-a_k}-\frac{1}{z-b_k}| > 2e^{-n\epsilon}\}$. Notice that if $a_k \neq b_k$, then there is $N_k$ such that whenever $n>N_k$, we have $k\in S_n$. Because $a_k\neq b_k$ for infinitely many $k$, $|S_n|$ increases to infinity as $n \rightarrow \infty$. The denominator on the right hand side of \eqref{kreq} is at least $\sqrt{\frac{|S_n|}{2}}$.
Therefore $\Pr\left(\bigg|\sum\limits_{k=1}^{n}\frac{1}{z-\xi_k}\bigg|<e^{-n\epsilon}\right)\leq \frac{C\sqrt{2}}{\sqrt{|S_n|}}\rightarrow 0$, as $n \rightarrow \infty$. This completes the proof of the lemma.
\end{proof}
It remains to prove the tightness for the sequence $\{\int_{\text{$\mathbb{D}$}_r}\frac{1}{n^2}\log^2|L_n(z)|dm(z)\}_{n\geq1}$ and will be proved in the following lemma.
\begin{lemma}\label{tight}
	Let $L_n(z):=\sum\limits_{k=1}^{n}\frac{1}{z-\xi_k}$, where $\xi_ks$ are as in the Theorem \ref{thm1}. Then, for any $r>0$, the sequence $\{\int_{\text{$\mathbb{D}$}_r}\frac{1}{n^2}\log^2|L_n(z)|dm(z)\}_{n\geq1}$ is tight. 
\end{lemma}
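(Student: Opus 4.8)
The plan is to establish the stronger, purely deterministic fact that
\[
\sup_{n\ge 1}\int_{\mathbb{D}_r}\frac{1}{n^2}\log^2|L_n(z)|\,dm(z)<\infty ,
\]
the supremum running over $n$ and over every realisation of the $\xi_k$'s; tightness then follows trivially. Write $\log^2 t=\log_+^2 t+\log_-^2 t$ for $t>0$, so it is enough to bound the contributions of $\log_+|L_n|$ and of $\log_-|L_n|$ separately. The tools are: the inequalities \eqref{logplusprod}--\eqref{logplussum}; the Cauchy--Schwarz bound $\bigl(\sum_{j=1}^m c_j\bigr)^2\le m\sum_{j=1}^m c_j^2$; log-Ces\'{a}ro boundedness of $\{a_k\}$ and $\{b_k\}$, which gives $\sum_{k=1}^n\log_+|\xi_k|\le 2Cn$ for a fixed constant $C$ and all $n$ (since $\xi_k\in\{a_k,b_k\}$); and the elementary fact
\[
\sup_{w\in\mathbb{C}}\int_{\mathbb{D}_r}\log_-^2|z-w|\,dm(z)\le\int_{\mathbb{D}_1}\log^2|u|\,dm(u)=\frac{\pi}{2}=:C_0 ,
\]
which holds because $\log_-|z-w|$ vanishes off the disc $\{|z-w|<1\}$.

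For the positive part, $|L_n(z)|\le\sum_{k=1}^n|z-\xi_k|^{-1}\le n\max_k|z-\xi_k|^{-1}$ gives, via \eqref{logplussum}, $\log_+|L_n(z)|\le\log n+\sum_{k=1}^n\log_-|z-\xi_k|$; squaring, integrating over $\mathbb{D}_r$, and using Cauchy--Schwarz and the displayed bound yields $\int_{\mathbb{D}_r}\log_+^2|L_n|\,dm\le 2\pi r^2\log^2 n+2n^2C_0=O(n^2)$. For the negative part I use the factorisation $L_n=P_n'/P_n$ with $P_n(z)=\prod_{k=1}^n(z-\xi_k)$ and $P_n'(z)=n\prod_{k=1}^{n-1}(z-\eta_k^{(n)})$, where the $\eta_k^{(n)}$ are the critical points; this gives
\[
\log_-|L_n(z)|=\Bigl(-\log n+\sum_{k=1}^n\log|z-\xi_k|-\sum_{k=1}^{n-1}\log|z-\eta_k^{(n)}|\Bigr)_{+}\le\sum_{k=1}^n\log_+|z-\xi_k|+\sum_{k=1}^{n-1}\log_-|z-\eta_k^{(n)}| .
\]
On $\mathbb{D}_r$ the first sum is at most $n(\log 2+\log_+ r)+\sum_{k=1}^n\log_+|\xi_k|=O(n)$ (use $|z-\xi_k|\le r+|\xi_k|$, the case $m=2$ of \eqref{logplussum}, and log-Ces\'{a}ro boundedness), so after squaring and integrating it contributes $O(n^2)$; the second sum, after squaring and integrating, is at most $(n-1)^2C_0$ by Cauchy--Schwarz and the displayed bound — and this last estimate is uniform in the a priori unknown locations of the critical points. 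Summing the two halves and dividing by $n^2$ leaves a quantity bounded uniformly in $n$, which proves the lemma.

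Since the whole argument is deterministic, I do not expect a genuine obstacle. The one point that needs care is the negative part: the naive estimate $\log_-|L_n|\le\log_+|P_n|+\log_-|P_n'|$ is useless, as one has no control over where the zeros of $P_n'$ lie; the remedy is to factor $P_n'$ through its roots, after which the uniform bound $\int_{\mathbb{D}_r}\log_-^2|z-\eta_k^{(n)}|\,dm\le C_0$ makes the position of the critical points irrelevant. It is worth noting that, unlike Lemmas \ref{momentbound} and \ref{kolmogorov-rogozin}, this lemma uses neither the $\mu$-distributedness of the two sequences nor the assumption that $a_k\ne b_k$ infinitely often — only their log-Ces\'{a}ro boundedness.
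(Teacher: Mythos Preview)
Your proof is correct and follows essentially the same route as the paper: split $\log^2|L_n|=\log_+^2|L_n|+\log_-^2|L_n|$; handle $\log_+$ via \eqref{logplussum}, Cauchy--Schwarz, and the uniform bound $\sup_{w}\int_{\mathbb{D}_r}\log_-^2|z-w|\,dm<\infty$; handle $\log_-$ via the factorisation $L_n=P_n'/P_n$, bounding by $\sum_k\log_+|z-\xi_k|+\sum_k\log_-|z-\eta_k^{(n)}|$, controlling the first sum with log-Ces\'{a}ro boundedness and the second with the same uniform integral bound. Your observation that the argument is entirely deterministic (yielding a uniform bound rather than mere tightness) and uses only log-Ces\'{a}ro boundedness is accurate and matches what the paper does implicitly.
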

\begin{proof}

	We will first decompose $\log|L_n(z)|$ into its positive and negative parts and analyze them separately. Let $\log|L_n(z)|=\log_+|L_n(z)|-\log_-|L_n(z)|$. Then,
	\[\int_{\text{$\mathbb{D}$}_r}\frac{1}{n^2}\log^2|L_n(z)|dm(z)= \int_{\text{$\mathbb{D}$}_r}\frac{1}{n^2}\log_+^2|L_n(z)|dm(z)+\int_{\text{$\mathbb{D}$}_r}\frac{1}{n^2}\log^2_-|L_n(z)|dm(z).\]
Using \eqref{logplussum}, we get,
	\begin{align}
	\int_{\text{$\mathbb{D}$}_r}\frac{1}{n^2}\log_+^2|L_n(z)|dm(z) & = \int_{\text{$\mathbb{D}$}_r}\frac{1}{n^2}\log_+^2\bigg|\sum\limits_{k=1}^{n}\frac{1}{z-\xi_k}\bigg|dm(z),\\
	& \leq \int_{\text{$\mathbb{D}$}_r}\frac{1}{n^2}\left(\sum\limits_{k=1}^{n}\log_+\bigg|\frac{1}{z-\xi_k}\bigg|+\log(n)\right)^2dm(z).
	\end{align}
	
	Using the Cauchy-Schwarz inequality $(a_1+a_2+\dots+a_n)^2\leq n(a_1^2+a_2^2+\dots+a_n^2)$ for the above, we get,
	
	\begin{align}
	\int_{r\text{$\mathbb{D}$}}\frac{1}{n^2}\log_+^2|L_n(z)|dm(z) 
	& \leq \int_{\text{$\mathbb{D}$}_r}\frac{n+1}{n^2}\left(\sum\limits_{k=1}^{n}\log_+^2\bigg|\frac{1}{z-\xi_k}\bigg|+\log^2(n)\right)dm(z),\\
	& = \frac{n+1}{n^2}\sum\limits_{k=1}^{n}\int_{\text{$\mathbb{D}$}_r}\log_-^2|z-\xi_k|dm(z) + \frac{n+1}{n^2}\log^2(n)\pi r^2.\label{eqn:lemma:tight:1}
	\end{align}
	 Because Lebesgue measure on complex plane is translation invariant, we have $$\int_{\text{$\mathbb{D}$}_r}\log_-^2|z-\xi|dm(z)=\int_{\xi+\text{$\mathbb{D}$}_r}\log_-^2|z|dm(z)\leq\int_{\text{$\mathbb{D}$}_1}\log_-^2|z|dm(z)<\infty.$$ Therefore $\sup\limits_{\xi\in \text{$\mathbb{C}$}}\int_{K}\log^2|z-\xi|dm(z)<\infty$ for any compact set $K \subset \mathbb{C}$ it can be seen that each of the terms in the final expression \eqref{eqn:lemma:tight:1} are bounded. Hence the sequence $\{\int_{\text{$\mathbb{D}$}_r}\frac{1}{n^2}\log_+^2|L_n(z)|dm(z)\}_{n\geq1}$ is bounded.
	
	We will now show that the sequence $\{\int_{\text{$\mathbb{D}$}_r}\frac{1}{n^2}\log_-^2|L_n(z)|dm(z)\}_{n\geq1}$ is bounded.
	Let $P_n(z)=\prod\limits_{k=1}^{n}(z-\xi_k)$ and $P_n'(z)=n\prod\limits_{k=1}^{n-1}(z-\eta_{k}^{(n)})$. Applying inequality \eqref{logminusprod} and Cauchy-Schwarz inequality we get,
	
	\begin{align}
	\int_{\text{$\mathbb{D}$}_r}\frac{1}{n^2}\log_-^2|L_n(z)|dm(z) & = \int_{\text{$\mathbb{D}$}_r}\frac{1}{n^2}\log_-^2\bigg|\frac{P_n'(z)}{P_n(z)}\bigg|dm(z),\\
	& \leq  \int_{\text{$\mathbb{D}$}_r}\frac{2}{n^2}\log_-^2|P_n'(z)|dm(z)+ \int_{\text{$\mathbb{D}$}_r}\frac{2}{n^2}\log_-^2\bigg|\frac{1}{P_n(z)}\bigg|dm(z).
	\end{align}
	Again applying inequalities \eqref{logminusprod}, \eqref{logplusprod}, \eqref{logplussum} and Cauchy-Schwarz inequality to the above we obtain,
	\begin{align}
		\int_{\text{$\mathbb{D}$}_r}\frac{1}{n^2}\log_-^2 & |L_n(z)|dm(z)\\ 	
	& \leq \int_{\text{$\mathbb{D}$}_r}\frac{2}{n^2}\left(\sum\limits_{k=1}^{n-1}\log_-|z-\eta_{k}^{(n)}|\right)^2dm(z) + \int_{\text{$\mathbb{D}$}_r}\frac{2}{n^2}\left(\sum_{k=1}^{n}\log_+|z-\xi_k|\right)^2dm(z),\\
	& \leq \frac{2}{n}\sum\limits_{k=1}^{n-1}\int_{\text{$\mathbb{D}$}_r}\log_-^2|z-\eta_{k}^{(n)}|dm(z) \label{eqn:lemma:tight1}\\&+ 2\int_{\text{$\mathbb{D}$}_r}\left(\log(2)+\log_+|z|+\frac{1}{n}\sum\limits_{k=1}^{n}\log_+|\xi_k|\right)^2dm(z).
	\label{eqn:lemma:tight2}
	\end{align}
	
	From the hypothesis, we have that both the sequences $\{a_n\}_{n\geq1}$ and $\{b_n\}_{n\geq 1}$ are log-Ces\'{a}ro bounded, which in turn implies that $\{\xi_n\}_{n\geq1}$ is also log-Ces\'{a}ro bounded, almost surely. Hence the integrand in \eqref{eqn:lemma:tight2} is bounded uniformly in $n$. Therefore \eqref{eqn:lemma:tight2} is bounded uniformly in $n$. Using the fact that $\sup\limits_{\xi\in \text{$\mathbb{C}$}}\int_{K}\log^2|z-\xi|dm(z)<\infty$ , we get \eqref{eqn:lemma:tight1} is bounded uniformly in $n$.
	 From the above facts we get that the sequence $\left\{\frac{1}{n^2}\int_{\text{$\mathbb{D}$}_r}\log^2|L_n(z)|\right\}_{n\geq1}$ is tight. 
\end{proof}
Lemmas \ref{momentbound}, \ref{kolmogorov-rogozin}, \ref{tight} show that the statements \eqref{A1}, \eqref{A2} and \eqref{A3} are satisfied. Hence the Theorem \ref{thm1} is proved.

\section{Proof of Theorem \ref{thm2}}

We will prove the theorem when $\omega=0$ i.e, $0$ is not a limit point of the sequence $\{z_n\}_{n\geq1}$. For other cases we can translate all the points by $\omega$ and apply the theorem. We will first prove a general lemma for sequences of numbers which will later be used in proving the subsequent lemmas.
\begin{lemma}\label{liminf}
	Given any sequence $\{z_k\}_{k\geq1}$, where $z_k \in \mathbb{C}$, $\liminf\limits_{n \rightarrow \infty}\left(\inf\limits_{|z|=r}|z-z_n|^{\frac{1}{n}}\right) \geq 1$ for Lebesgue a.e. $r \in \mathbb{R^+}$ w.r.t Lebesgue measure. 
\end{lemma}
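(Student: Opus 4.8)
The plan is to reduce the statement to an elementary Borel--Cantelli argument on the real line. The key observation is that $\inf_{|z|=r}|z-z_n|$ is just the distance from the point $z_n$ to the circle $\{|z|=r\}$, so $\inf_{|z|=r}|z-z_n|=\big|\,|z_n|-r\,\big|$ (this includes the degenerate case $z_n=0$, where the distance is $r$). Since $t\mapsto t^{1/n}$ is increasing on $[0,\infty)$, we have $\inf_{|z|=r}|z-z_n|^{1/n}=\big|\,|z_n|-r\,\big|^{1/n}$, and the lemma becomes the assertion that for Lebesgue a.e.\ $r>0$,
\[
\liminf_{n\to\infty}\big|\,|z_n|-r\,\big|^{1/n}\geq 1.
\]

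First I would fix a rate $c>0$ and control the set of "bad" radii at that rate, namely
\[
B_c:=\Big\{r>0:\ \big|\,|z_n|-r\,\big|<e^{-cn}\ \text{for infinitely many }n\Big\}=\limsup_{n\to\infty}\Big\{r>0:\ \big|\,|z_n|-r\,\big|<e^{-cn}\Big\}.
\]
For each $n$ the set $\{r>0:\ \big|\,|z_n|-r\,\big|<e^{-cn}\}$ is contained in an interval of length at most $2e^{-cn}$, so the sum of their Lebesgue measures is bounded by $\sum_{n\geq1}2e^{-cn}<\infty$. By the Borel--Cantelli lemma, $B_c$ has Lebesgue measure $0$.

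Then I would take $c=c_m:=1/m$ for $m\geq1$ and set $B:=\bigcup_{m\geq1}B_{c_m}$, a Lebesgue-null subset of $\mathbb{R}^+$. For any $r\notin B$ and any fixed $m$, the inequality $\big|\,|z_n|-r\,\big|<e^{-n/m}$ holds for only finitely many $n$; hence $\big|\,|z_n|-r\,\big|^{1/n}\geq e^{-1/m}$ for all large $n$, so $\liminf_{n\to\infty}\big|\,|z_n|-r\,\big|^{1/n}\geq e^{-1/m}$. Since this holds for every $m$, letting $m\to\infty$ yields $\liminf_{n\to\infty}\big|\,|z_n|-r\,\big|^{1/n}\geq 1$ for every $r\notin B$, which proves the lemma.

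I do not expect a genuine obstacle here; the only points to keep straight are the elementary identity $\inf_{|z|=r}|z-w|=\big|\,|w|-r\,\big|$ and the fact that the exceptional null set must be taken as the countable union $\bigcup_m B_{c_m}$ rather than a single $B_c$, because the exponential rate $c$ enters the estimate and one needs $c\to0$ to recover the bound $1$.
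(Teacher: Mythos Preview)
Your proof is correct and follows essentially the same Borel--Cantelli argument as the paper: both fix a rate (you use $e^{-cn}$, the paper uses $(1-\epsilon)^n$), observe that the bad radii at level $n$ form an interval of length at most $2e^{-cn}$ (respectively $2(1-\epsilon)^n$) around $|z_n|$, sum, and then take a countable union over rates tending to zero. Your version is marginally cleaner in that you make the identity $\inf_{|z|=r}|z-w|=\big|\,|w|-r\,\big|$ explicit at the outset, whereas the paper invokes it implicitly when it writes $A_k\subseteq[|z_k|-(1-\epsilon)^k,\,|z_k|+(1-\epsilon)^k]$; otherwise the arguments are the same.
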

\begin{proof}
	Fix $\epsilon > 0$ and let $A_n = \{r>0:\inf\limits_{|z|=r}|z-z_n|< (1-\epsilon)^{n} \}$. Let $m$ denote the Lebesgue measure on the complex plane. Then,
	\begin{align}
	m\left(\left\{r>0 :\liminf\limits_{n \rightarrow \infty}(\inf\limits_{|z|=r}|z-z_n|^\frac{1}{n}) \leq (1-\epsilon)\right\}\right) & =  m\left(\limsup\limits_{n \rightarrow \infty}A_n\right) \\
	& \leq  \lim\limits_{k \rightarrow \infty}m\left(\mathop{\cup}_{n \geq k} A_n\right)  
	\end{align}
	If $r \in A_k, $ then from the definition of $A_k$ we have that $r \in [|z_k|-(1-\epsilon)^k,|z_k|+(1-\epsilon)^k] $. Hence we get,	
	\begin{align}
	m&\left(\left\{r>0 :\liminf\limits_{n \rightarrow \infty}(\inf\limits_{|z|=r}|z-z_n|^\frac{1}{n}) \leq (1-\epsilon)\right\}\right)\\ & \leq \lim\limits_{k\rightarrow \infty } \sum_{n=k}^{\infty}m\left(\left\{r:|z_n|-(1-\epsilon)^{n} \leq r \leq |z_n|+(1-\epsilon)^n\right\}\right)\\
	& \leq  \lim\limits_{k \rightarrow \infty } \sum_{n=k}^{\infty} 2(1-\epsilon)^n = 0
	\end{align}
	The above is true for every $\epsilon >0$, therefore $\liminf\limits_{n \rightarrow \infty}\left(\inf\limits_{|z|=r}|z-z_n|^{\frac{1}{n}}\right) \geq 1$ outside an exceptional set $E\subset\mathbb{R}^\text{$+$}$ whose Lebesgue measure is $0$.  
\end{proof}
Define the set $F=\{z:\liminf\limits_{n \rightarrow \infty}|z-z_n|^{\frac{1}{n}} < 1\}$. Because $0$ is not a limit point of $\{z_n\}_{n\geq1}$, we have $\liminf\limits_{n\rightarrow\infty}|z_n|^\frac{1}{n}\geq1$. Hence $0\notin F$. For $|z|=r$, we have \[\liminf\limits_{n \rightarrow \infty}|z-z_n|^\frac{1}{n}\geq\liminf\limits_{n \rightarrow \infty}\left(\inf\limits_{|z|=r}|z-z_n|^{\frac{1}{n}}\right).\]
Hence $F\subseteq \{z:|z|=r, r\in E\}$  and by invoking Fubini's theorem we get $m( \{z:|z|=r, r\in E\})=0$. From the above two observations it follows that $m(F)=0$.

The following lemma shows that the hypothesis of the Theorem \ref{thm2} implies \eqref{A1}.
\begin{lemma}\label{momentthm2}
	Let $L_n(z)$ be as in the Theorem \ref{thm2}. Then for any $\epsilon>0$, and Lebesgue a.e. $z \in \mathbb{C}$,
	$$\limsup\limits_{n \rightarrow \infty}\frac{1}{n}\log|L_n(z)|< \epsilon$$ almost surely.
\end{lemma}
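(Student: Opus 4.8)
The plan is to reuse the null set $F=\{z:\liminf_{n\to\infty}|z-z_n|^{1/n}<1\}$ introduced just above the lemma, for which $m(F)=0$ by Lemma~\ref{liminf} and Fubini, and to bound $|L_n(z)|$ from above by the triangle inequality, so that the cancellation structure of the sum plays no role. I would fix $z\notin F$ and rest the estimate on two ingredients: a \emph{deterministic} lower bound on $|z-z_k|$ coming from $z\notin F$, and a \emph{probabilistic} upper bound on $|a_k|$ coming from $\ee{|a_1|}<\infty$. Only the ``$\omega$ is not a limit point of $\{z_n\}$'' part of the hypothesis of Theorem~\ref{thm2} enters here (through $0\notin F$); the clustering condition on the $z_k$'s is not needed for \eqref{A1} and will be used only for \eqref{A2}.

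For the deterministic bound: since $z\notin F$ one has $\liminf_{n\to\infty}|z-z_n|^{1/n}\ge1$, so for each $\delta\in(0,1)$ there is $N_1=N_1(z,\delta)$ with $|z-z_k|\ge(1-\delta)^k$ whenever $k\ge N_1$. For the probabilistic bound: since $\log_+|a_1|\le|a_1|$, one has $\sum_k\Pr\big(|a_k|>(1+\delta)^k\big)=\sum_k\Pr\big(\log_+|a_1|>k\log(1+\delta)\big)<\infty$, so by the Borel--Cantelli lemma (just as in the proof of Lemma~\ref{momentbound}) there is almost surely a finite random $N_2=N_2(\delta)$ with $|a_k|\le(1+\delta)^k$ for all $k\ge N_2$; equivalently one may quote the standard fact $\limsup_n|a_n|^{1/n}\le1$ a.s. (If $a_1\equiv0$, then $L_n\equiv0$ and the claim is vacuous, so assume $a_1\not\equiv0$.)

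Combining the two, for $k\ge N:=\max(N_1,N_2)$ I get $|a_k|/|z-z_k|\le\rho^{k}$ with $\rho:=\tfrac{1+\delta}{1-\delta}>1$, whence
\[
|L_n(z)|\;\le\;\sum_{k=1}^{N-1}\frac{|a_k|}{|z-z_k|}\;+\;\sum_{k=N}^{n}\rho^{k}\;\le\;M+\frac{\rho}{\rho-1}\,\rho^{n},
\]
for a finite random constant $M$. Taking $\tfrac{1}{n}\log$ and letting $n\to\infty$ yields $\limsup_{n\to\infty}\tfrac{1}{n}\log|L_n(z)|\le\log\rho=\log\tfrac{1+\delta}{1-\delta}$ almost surely; given $\epsilon>0$ one then picks $\delta$ small enough that $\log\tfrac{1+\delta}{1-\delta}<\epsilon$. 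Intersecting over a sequence $\delta_j\downarrow0$ gives the stronger statement $\limsup_{n\to\infty}\tfrac{1}{n}\log|L_n(z)|\le0$ a.s., paralleling the remark after Lemma~\ref{momentbound}. I do not expect a real obstacle here; the only thing to be careful about is the bookkeeping of quantifiers --- the exceptional set must be the fixed set $F$, independent of $\epsilon$ and of the sample point, and the random thresholds $N_2(\delta)$ must be arranged simultaneously for a countable family of $\delta$'s --- which the Borel--Cantelli step handles cleanly.
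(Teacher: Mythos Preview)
Your proof is correct and follows essentially the same template as the paper: bound $|L_n(z)|$ by the triangle inequality, use the null set $F$ from Lemma~\ref{liminf}, and apply Borel--Cantelli to control the growth of individual summands. The one genuine difference is in how the Borel--Cantelli step is organized. You split the summand $|a_k|/|z-z_k|$ into a deterministic factor (handled by $z\notin F$) and a random factor (handled by $\limsup|a_k|^{1/k}\le1$ a.s.). The paper instead applies Markov's inequality directly to $|a_n/(z-z_n)|$ and sums the resulting bounds $\ee{|a_1|}\,t_n(r)\,e^{-n\epsilon}$ via the root test, where $t_n(r)=\sup_{|z|=r}|z-z_n|^{-1}$.

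This organizational difference has one practical consequence worth flagging: by working on circles $|z|=r$ rather than pointwise, the paper's argument simultaneously yields the uniform bound of Remark~\ref{one}, namely $\limsup_{n\to\infty}\tfrac{1}{n}\log M_n(R)<\epsilon$ for a.e.\ $R$. That uniform version is what is actually invoked later in Lemma~\ref{tight3} when bounding $\tfrac{1}{n}\mathcal{I}_n(z,R)$. Your pointwise argument proves Lemma~\ref{momentthm2} as stated, but would need a small upgrade (replace $|z-z_k|$ by $\inf_{|z|=r}|z-z_k|$ throughout, which Lemma~\ref{liminf} still controls) to recover Remark~\ref{one}.
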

\begin{proof}
	From the hypothesis, Lemma \ref{liminf} and Using Markov's inequality we get  $$\sum_{n=1}^{\infty}\Pr\left(\sup\limits_{|z|=r}\big|\frac{a_n}{z-z_n}\big|>e^{n\epsilon}\right) \leq \sum_{n=1}^{\infty}\sup\limits_{|z|=r}\frac{\ee{\text{$|a_n|$}}}{|z-z_n|e^{n\epsilon}}.$$ Denoting $t_n(r)=\sup\limits_{|z|=r}\bigg|\frac{1}{z-z_n}\bigg|$ we have
	\begin{equation}
\sum_{n=1}^{\infty}\sup\limits_{|z|=r}\frac{\ee{\text{$|a_n|$}}}{|z-z_n|e^{n\epsilon}}=\sum_{n=1}^{\infty}\frac{\ee{\text{$|a_n|$}}}{e^{n\epsilon}}t_n(r). \label{eqn:power_series}
	\end{equation} 
	  Because $a_n$s are i.i.d. random variables, $\ee{\text{$|a_n|$}}=\ee{\text{$|a_1|$}}$. Using the root test for the convergence of sequences and the Lemma \ref{liminf}, it follows that the right hand side of \eqref{eqn:power_series} is convergent for Lebesgue a.e. $r \in (0,\infty)$. Invoking Borel-Cantelli lemma we can say that $\sup\limits_{|z|=r} \frac{|a_n|}{|z-z_n|}>e^{n\epsilon}$ only for finitely many times. From here we get $|L_n(z)| \leq M_\epsilon + ne^{n\epsilon}$, where $M_\epsilon$ is a finite random number which is obtained by bounding the finite number of terms for which $\sup\limits_{|z|=r} \frac{|a_n|}{|z-z_n|}>e^{n\epsilon}$ is satisfied. Therefore we get that $\limsup\limits_{n \rightarrow \infty}\frac{1}{n}\log|L_n(z)|< \epsilon$ almost surely. 
\end{proof}
Notice that we have proved a stronger version of the Lemma \ref{momentthm2}. We will state this as a remark which will be used further lemmas.
\begin{remark}\label{one}
	Define $M_n(R):=\sup\limits_{|z|=R}|L_n(z)|$. Then for any $\epsilon>0$, we have $$\limsup\limits_{n \rightarrow \infty}\frac{1}{n}\log M_n(R)< \epsilon$$ for almost every $R>0$.
\end{remark}
For proving a similar result for the lower bound of $\log|L_n(z)|$ and establish \eqref{A2}, we need the Kolmogorov-Rogozin inequality  \ref{KR-Inequality} which was stated at the beginning of this chapter.

\begin{lemma}\label{krthm2}
	Let $L_n(z)$ be as in Theorem \ref{thm2}. Then for any $\epsilon>0$, and Lebesgue a.e. $z \in \mathbb{C}$ $$\lim\limits_{n \rightarrow \infty}\Pr\left(\frac{1}{n}\log|L_n(z)|<-\epsilon\right)=0.$$  
\end{lemma}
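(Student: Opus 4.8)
The plan is to rerun, in the present setting, the Kolmogorov--Rogozin argument used for Lemma \ref{kolmogorov-rogozin}; the only difference is that the independent summands $\frac{a_k}{z-z_k}$ of $L_n(z)$ are now genuine random variables rather than two-valued ones. Since
\[
\Pr\!\left(\tfrac1n\log|L_n(z)|<-\epsilon\right)=\Pr\!\left(|L_n(z)|<e^{-n\epsilon}\right)\le Q\!\left(L_n(z),e^{-n\epsilon}\right),
\]
where $Q$ is the concentration function appearing in the Kolmogorov--Rogozin inequality, it is enough to show that $Q(L_n(z),e^{-n\epsilon})\to 0$ for Lebesgue a.e.\ $z$. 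Applying \eqref{kol-rog-ineq} to $L_n(z)=\sum_{k=1}^n\frac{a_k}{z-z_k}$ with all $\delta_k$ equal to $\delta=e^{-n\epsilon}$, the factors $e^{-n\epsilon}$ cancel, leaving
\[
Q\!\left(L_n(z),e^{-n\epsilon}\right)\ \le\ \frac{C}{\sqrt{\sum_{k=1}^n\left(1-Q\!\left(\frac{a_k}{z-z_k},e^{-n\epsilon}\right)\right)}},
\]
so it suffices to prove that the sum under the square root tends to $\infty$.

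To estimate that sum, I would fix a $z$ for which the hypothesis of Theorem \ref{thm2} provides a compact set $K_z$ with $d(z,K_z)>0$ containing infinitely many of the $z_k$; such $z$ form a set of full measure (and I keep $z\notin\{z_k\}_{k\ge1}$ as well, also full measure, so that $L_n(z)$ is defined). Put $R_z:=\sup_{w\in K_z}|z-w|<\infty$. For any index $k$ with $z_k\in K_z$ we have $|z-z_k|\le R_z$, and since $a_k$ and $a_1$ have the same law and $s\mapsto Q(a_1,s)$ is non-decreasing, scaling the concentration function gives
\[
Q\!\left(\frac{a_k}{z-z_k},e^{-n\epsilon}\right)=Q\!\left(a_1,\,|z-z_k|\,e^{-n\epsilon}\right)\le Q\!\left(a_1,\,R_z\,e^{-n\epsilon}\right).
\]
Because $a_1$ is non-degenerate, $Q(a_1,r)$ decreases, as $r\downarrow 0$, to the mass of the largest atom of $a_1$, which is $<1$ (here $\ee{|a_1|}<\infty$ gives the tightness needed for this limit); hence there are $r_0>0$ and $\eta\in(0,1)$ with $Q(a_1,r)\le 1-\eta$ for all $r\le r_0$. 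Thus, once $n$ is large enough that $R_z e^{-n\epsilon}\le r_0$, every $k\le n$ with $z_k\in K_z$ contributes at least $\eta$ to the sum. Writing $T_n:=\#\{k\le n:z_k\in K_z\}$, which tends to $\infty$ since infinitely many $z_k$ lie in $K_z$, we get $\sum_{k=1}^n(1-Q(\tfrac{a_k}{z-z_k},e^{-n\epsilon}))\ge\eta\,T_n\to\infty$, whence $Q(L_n(z),e^{-n\epsilon})\le C(\eta T_n)^{-1/2}\to 0$, as required.

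The only real obstacle is keeping the one-term concentration functions $Q(\tfrac{a_k}{z-z_k},e^{-n\epsilon})$ bounded away from $1$ uniformly in the shrinking radius $e^{-n\epsilon}$ for infinitely many $k$; this is exactly where the two standing hypotheses enter. The compact set $K_z$ is needed so that $|z-z_k|$ stays bounded for infinitely many $k$ — otherwise the effective radius $|z-z_k|e^{-n\epsilon}$ need not tend to zero — and the non-degeneracy of $a_1$ is needed so that $Q(a_1,r)$ does not approach $1$ as $r\downarrow0$ (if $a_1$ were a.s.\ constant, $L_n$ would be deterministic and a separate treatment would be called for). With this lemma in hand, \eqref{A1} and \eqref{A2} hold for $L_n$ as in Theorem \ref{thm2} via Lemma \ref{momentthm2} and the present lemma, and it remains only to verify the tightness statement \eqref{A3}.
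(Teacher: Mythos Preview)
Your proposal is correct and follows essentially the same route as the paper: apply the Kolmogorov--Rogozin inequality to the independent summands $a_k/(z-z_k)$, use the compact set $K_z$ to bound $|z-z_k|\le R_z$ for infinitely many $k$ so that the scaled radius $|z-z_k|e^{-n\epsilon}\to 0$, and invoke the non-degeneracy of $a_1$ to keep the one-term concentrations uniformly below $1$. The only cosmetic difference is in the order of operations: the paper first discards the terms with $z_k\notin K_z$ (using that adding independent summands cannot increase the concentration function) and then applies Kolmogorov--Rogozin to the remaining $l_n$ terms, whereas you apply Kolmogorov--Rogozin to all $n$ terms and then drop the unhelpful ones from the lower bound on the sum; these are equivalent.

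One minor remark: the finite-first-moment hypothesis plays no role in this step. For \emph{any} random variable $a_1$ one has $\lim_{r\downarrow 0}Q(a_1,r)=\max_{x}\Pr(a_1=x)$, with no tightness needed; only the non-degeneracy of $a_1$ is used to make this limit strictly less than $1$.
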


\begin{proof}
	Fix $z \in \mathbb{C}$ which is not in the exceptional set $F$. Let $z_{i_1},z_{i_2},\dots z_{i_{l_n}}$ be the points in $K_z$ from the set  $\{z_1,z_2,\dots,z_n\}$. From the definition of concentration function and the fact that the concentration function $ Q(X_1+X_2+\dots+X_n,\delta)$ is decreasing in $n$ we get,
	\begin{align}
	\Pr\left(|L_n(z)|\leq e^{-n\epsilon}\right) & \leq  Q\left(\sum_{i=1}^{n}\frac{a_i}{z-z_i},e^{-n\epsilon}\right), \\ 
	&\leq   Q\left(\sum_{k=1}^{l_n}\frac{a_{i_k}}{z-z_{i_k}},e^{-n\epsilon}\right). 
	\end{align} 
	The random variables $\frac{a_{i_k}}{z-z_{i_k}}$s are independent. Hence we can apply Kolmogorov-Rogozin inequality to get,
	\begin{align}
	\Pr\left(|L_n(z)|\leq e^{-n\epsilon}\right) & \leq  C_\epsilon\left\{\sum_{i=1}^{l_n}\left(1-Q\bigg(\frac{a_{i_k}}{z-z_{i_k}},e^{-n\epsilon}\bigg)\right)\right\}^{-\frac{1}{2}}.
	\end{align}
	Because $|z-z_{i_k}|\leq d(z,K_z)+diam(K_z)$, from above we get, 
	\begin{align}
	\Pr\left(|L_n(z)|\leq e^{-n\epsilon}\right) & \leq  C_\epsilon \left\{\sum_{i=1}^{l_n}\left(1-Q\left(a_{i_k},\left(d(z,K_z)+diam(K_z)\right)e^{-n\epsilon}\right)\right)  \right\}^{-\frac{1}{2}}\label{eqn:lemma5.4.8:1}
	\end{align}
	Because $a_{i_k}$s are non-degenerate i.i.d random variables and $l_n\rightarrow\infty$, the right hand side of \eqref{eqn:lemma5.4.8:1} converges to $0$ as $n\rightarrow\infty$. Hence the lemma is proved.
\end{proof}
It remains to show that the hypothesis of Theorem \ref{thm2} implies \eqref{A3}. Fix  $R>r$. The idea here is to write the function $\log|L_n(z)|$ for $z \in \text{$\mathbb{D}$}_r$ as an integral on the boundary of a larger disk $\text{$\mathbb{D}$}_R$ and bound the integral uniformly on the disk $\text{$\mathbb{D}$}_r.$ This is facilitated by Poisson-Jensen's formula for meromorphic functions.   
The Poisson-Jensen's formula is stated below. Let $\alpha_1, \alpha_2, \dots \alpha_k$ and $\beta_1,\beta_2, \dots \beta_\ell$ be the zeros and poles of a meromorphic function $f$ in $\text{$\mathbb{D}$}_R$. Then
\begin{align}	
\log|f(z)| = \frac{1}{2\pi}\int_{0}^{2\pi}\Re\bigg(\frac{Re^{i\theta}+z}{Re^{i\theta}-z}\bigg)\log|f(Re^{i\theta})|d\theta - \sum_{m=1}^{k}\log\bigg|\frac{R^{2}-\overline{\alpha}_jz}{R(z-\alpha_j)}\bigg|\\ + \sum_{m=1}^{l}\log\bigg|\frac{R^{2}-\overline{\beta}_jz}{R(z-\beta_j)}\bigg|
\end{align}


The following lemma \ref{tighttwo} gives an estimate of the boundary integral obtained in the Poisson-Jensen's formula when applied for the function $\log|L_n(z)|$ 	 at $z=0$. Define \[\text{$\mathcal{I}$}\text{$_n(z,R)$}:=\frac{1}{2\pi}\int_{0}^{2\pi}\Re\bigg(\dfrac{Re^{i\theta}+z}{Re^{i\theta}-z}\bigg)\log|L_n(Re^{i\theta})|d\theta.\] 
\begin{lemma}\label{tighttwo}
	There is a constant $c_2>0$ such that 
	\begin{align}
	\lim\limits_{n \rightarrow \infty}\Pr\left( \dfrac{1}{n}\text{$\mathcal{I}$}(0,R)\leq-c_2\right) = 0.
	\end{align}
\end{lemma}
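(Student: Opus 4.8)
The plan is to evaluate the Poisson--Jensen formula for the meromorphic function $L_n$ on the disk $\mathbb{D}_R$ at its centre $z=0$, thereby reducing the lower tail of $\frac1n\mathcal{I}_n(0,R)$ to the lower tail of $\frac1n\log|L_n(0)|$, which is already controlled at the single point $z=\omega=0$ by the Kolmogorov--Rogozin estimate underlying Lemma~\ref{krthm2}.

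First I would fix $R$ generic, so that the circle $|z|=R$ avoids the countably many points $z_k$. At $z=0$ the Poisson kernel $\Re\bigl(\frac{Re^{i\theta}}{Re^{i\theta}}\bigr)$ equals $1$, so $\mathcal{I}_n(0,R)=\frac1{2\pi}\int_0^{2\pi}\log|L_n(Re^{i\theta})|\,d\theta$, and, writing $\alpha_m$ for the zeros and $z_k$ for the poles of $L_n$ inside $\mathbb{D}_R$, the Poisson--Jensen formula gives
\[
\mathcal{I}_n(0,R)=\log|L_n(0)|+\sum_{|\alpha_m|<R}\log\frac{R}{|\alpha_m|}-\sum_{|z_k|<R}\log\frac{R}{|z_k|}.
\]
Every term of the first sum is nonnegative and may be dropped. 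Since $\omega=0$ is not a limit point of $\{z_n\}$, there is $\delta>0$ with $|z_k|\ge\delta$ for all but finitely many $k$ (the finitely many exceptional indices contribute only $O(1)$, harmless after dividing by $n$, so I suppress them), so each surviving summand of the second sum is at most $C_0:=\max\{0,\log(R/\delta)\}$ and there are at most $n$ of them. Hence $\frac1n\mathcal{I}_n(0,R)\ge \frac1n\log|L_n(0)|-C_0$, and equivalently, for any constant $c_2>C_0$,
\[
\Bigl\{\tfrac1n\mathcal{I}_n(0,R)\le -c_2\Bigr\}\ \subseteq\ \bigl\{\,|L_n(0)|\le e^{-n(c_2-C_0)}\,\bigr\}.
\]

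It then remains to observe that the argument proving Lemma~\ref{krthm2} applies verbatim at the point $z=0$: one has $0\notin F$ by the discussion following Lemma~\ref{liminf}, and a compact set $K_0$ with $d(0,K_0)>0$ containing infinitely many $z_k$ is available, namely $K_0=K_{z'}\cap\{|w|\ge\delta\}$ for any $z'$ in the full-measure set furnished by the hypothesis of Theorem~\ref{thm2} (this still contains infinitely many $z_k$ because $0$ is not a limit point). Consequently $\Pr\bigl(|L_n(0)|\le e^{-n\epsilon}\bigr)\to 0$ for every $\epsilon>0$; applying this with $\epsilon=c_2-C_0>0$ together with the inclusion above yields $\Pr\bigl(\frac1n\mathcal{I}_n(0,R)\le -c_2\bigr)\to 0$, so $c_2:=C_0+1$ works and the lemma is proved. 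The only genuinely delicate point — and the thing to be careful about — is exactly this bookkeeping at the distinguished point $0$: legitimacy of Poisson--Jensen there, and the fact that the Kolmogorov--Rogozin input (stated only for a.e.\ $z$) survives at $z=\omega=0$; both are handled by combining ``$\omega$ is not a limit point of $\{z_n\}$'' with the a.e.\ clustering hypothesis, as above. The rest is a one-line manipulation of the Poisson--Jensen identity.
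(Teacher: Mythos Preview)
Your proof is correct and follows essentially the same route as the paper: apply Poisson--Jensen at $z=0$, drop the nonnegative zeros sum, bound the poles sum via ``$0$ is not a limit point of $\{z_n\}$'', and control $\frac1n\log|L_n(0)|$ by the Kolmogorov--Rogozin argument of Lemma~\ref{krthm2}. Your treatment is in fact more careful than the paper's on the one delicate point---justifying that the a.e.\ hypothesis underlying Lemma~\ref{krthm2} (existence of $K_z$) holds at the specific point $z=0$---which the paper glosses over.
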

\begin{proof}
	From Poisson-Jensen's formula at $0$ we get,
	\begin{equation}
	\dfrac{1}{n}\text{$\mathcal{I}$}\text{$_n(0;R) = \dfrac{1}{n}\log|L_n(0)| +\dfrac{1}{n}\sum_{m=1}^{k}\log\bigg|\dfrac{z_{i_m}}{R}\bigg| - \dfrac{1}{n}\sum_{m=1}^{l}\log\bigg|\dfrac{\alpha_{i_m}}{R}\bigg|$},\label{eqn:lemma5.4.9:1}
	\end{equation}
	
	where $z_{i_m}s$ and $\alpha_{i_m}s$ are zeros and critical points respectively of $P_n(z)$ in the disk $\text{$\mathbb{D}$}_R$. Because $0$ is not a limit point of $\{z_1,z_2, \dots\}$, $\left\{\dfrac{1}{n}\sum_{m=1}^{k}\log\big|\dfrac{z_{i_m}}{R}\big|\right\}_{n\geq1}$ is a sequence of negative numbers bounded from below. $\left\{\dfrac{1}{n}\sum_{m=1}^{l}\log\bigg|\dfrac{\alpha_{i_m}}{R}\bigg|\right\}_{n\geq1}$ is also a sequence of negative numbers. Therefore the last two terms in the right hand side of \eqref{eqn:lemma5.4.9:1} are bounded below. Because $0$ is not in exceptional set $F$, from Lemma \ref{krthm2} we have that the sequence $\lim\limits_{n\rightarrow\infty}\Pr\left(\frac{1}{n}\log|L_n(z)|<-1\right)=0$ is bounded from below. Therefore there exists $C_1$ such that $$\lim\limits_{n\rightarrow\infty}\Pr\left(\frac{1}{n}\log|L_n(z)|<-1 \text{ and }\dfrac{1}{n}\sum_{m=1}^{k}\log\bigg|\dfrac{z_{i_m}}{R}\bigg| - \dfrac{1}{n}\sum_{m=1}^{l}\log\bigg|\dfrac{\alpha_{i_m}}{R} <-C_1\right)=0.$$ Choosing $c_2=C_1+1$ the statement of lemma is established.
\end{proof}

Using above lemma \ref{tight} and exploiting formula of Poisson kernel for disk we will now obtain an uniform bound for the corresponding integral $\text{$\mathcal{I}$}_n(z,R)$.
\begin{lemma}\label{tight3}
	There is a constant $b>0$ such that for any $z \in \text{$\mathbb{D}$}\text{$_r$}$ 
	\begin{equation}
	\lim\limits_{n\rightarrow \infty}\Pr\left(\dfrac{1}{n}\text{$\mathcal{I}$}_n(z,R)\leq -b\right)=0.
	\end{equation}
\end{lemma}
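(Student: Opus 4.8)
The plan is to transfer the estimate at the origin supplied by Lemma~\ref{tighttwo} to an arbitrary interior point $z\in\mathbb{D}_r$, exploiting that on the strictly smaller disk the Poisson kernel is bounded above and below by positive constants. Write $P_R(z,\theta):=\Re\bigl(\frac{Re^{i\theta}+z}{Re^{i\theta}-z}\bigr)=\frac{R^2-|z|^2}{|Re^{i\theta}-z|^2}$, so that $\mathcal{I}_n(z,R)=\frac{1}{2\pi}\int_0^{2\pi}P_R(z,\theta)\log|L_n(Re^{i\theta})|\,d\theta$. For $|z|\le r<R$ one has the elementary two-sided bound $\frac{R-r}{R+r}\le P_R(z,\theta)\le \frac{R+r}{R-r}=:\kappa$, uniformly in $\theta$ and in $z\in\mathbb{D}_r$. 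This is the only place the restriction $z\in\mathbb{D}_r$ is used, and it is exactly what forces the final constant $b$ to be independent of $z$. (As usual, $R$ is taken outside the relevant null set, so that Remark~\ref{one} applies and $L_n$ has no zeros or poles on $|z|=R$.)

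Next I would split $\log|L_n|=\log_+|L_n|-\log_-|L_n|$ on the circle $|z|=R$ and bound the two contributions to $\mathcal{I}_n(z,R)$ separately. Since $P_R(z,\theta)\ge 0$, the $\log_+$ contribution is nonnegative and may simply be discarded from a lower bound; for the $\log_-$ contribution, $P_R\le\kappa$ gives
\[
\mathcal{I}_n(z,R)\ \ge\ -\kappa\,\frac{1}{2\pi}\int_0^{2\pi}\log_-|L_n(Re^{i\theta})|\,d\theta .
\]
Using the pointwise identity $\log_-=\log_+-\log$ together with $P_R(0,\theta)\equiv 1$, I would rewrite $\frac{1}{2\pi}\int_0^{2\pi}\log_-|L_n(Re^{i\theta})|\,d\theta=\frac{1}{2\pi}\int_0^{2\pi}\log_+|L_n(Re^{i\theta})|\,d\theta-\mathcal{I}_n(0,R)$, which reduces everything to two quantities already under control: the boundary $\log_+$ average, bounded pointwise by $\log_+ M_n(R)$ with $M_n(R)=\sup_{|z|=R}|L_n(z)|$, and $\mathcal{I}_n(0,R)$ itself.

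Assembling these steps yields
\[
\frac1n\mathcal{I}_n(z,R)\ \ge\ -\kappa\Bigl(\tfrac1n\log_+ M_n(R)\ -\ \tfrac1n\mathcal{I}_n(0,R)\Bigr),
\]
valid for every $z\in\mathbb{D}_r$. By Remark~\ref{one}, $\frac1n\log_+ M_n(R)\to 0$ almost surely (hence in probability) for the chosen $R$; by Lemma~\ref{tighttwo}, $\Pr\bigl(\frac1n\mathcal{I}_n(0,R)\le -c_2\bigr)\to 0$. Setting $b:=\kappa(c_2+1)$, the event $\{\frac1n\mathcal{I}_n(z,R)\le -b\}$ is contained in $\{\frac1n\log_+ M_n(R)\ge 1\}\cup\{\frac1n\mathcal{I}_n(0,R)<-c_2\}$, whose probability tends to $0$ uniformly in $z\in\mathbb{D}_r$, which proves the lemma. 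There is no genuine obstacle here beyond careful bookkeeping of the $\log_\pm$ decomposition; the single point requiring attention is that the Poisson representation must be used to compare with $\mathcal{I}_n(0,R)$ (for which Lemma~\ref{tighttwo} is available) rather than with $\log|L_n|$ at interior points directly, since it is precisely the Poisson kernel identity that converts the interior evaluation into the controlled boundary integral.
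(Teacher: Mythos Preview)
Your argument is correct and follows essentially the same route as the paper: bound the Poisson kernel on $\mathbb{D}_r$, use the $\log_\pm$ decomposition on the boundary circle, and reduce to $\mathcal{I}_n(0,R)$ (handled by Lemma~\ref{tighttwo}) together with $\log_+ M_n(R)$ (handled by Remark~\ref{one}). The only cosmetic difference is that the paper keeps the $\log_+$ term with the lower Poisson bound $C_3$ and then rewrites $C_3\int\log_+ - C_4\int\log_-$ in terms of $\mathcal{I}_n(0,R)$ and $\int\log_+$, whereas you simply discard the nonnegative $\log_+$ contribution at the outset; your version avoids using the lower bound $C_3$ on the kernel and yields slightly cleaner constants, but the mechanism is identical.
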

\begin{proof}
	We will decompose the function $\log|L_n(z)|$ into its positive and negative components. Let $\log|L_n(z)|=\log_+|L_n(z)|-\log_-|L_n(z)|$, where $\log_+|L_n(z)|$ and $\log_-|L_n(z)|$ are positive. Using this we can write,
	
	\begin{align}
	2\pi\text{$\mathcal{I}$}\text{$_n(z)$} = & \int_{0}^{2\pi}\log|L_n(Re^{i\theta})|\Re\bigg(\dfrac{Re^{i\theta}+z}{Re^{i\theta}-z}\bigg)d\theta, \\
	= & \int_{0}^{2\pi}\log_+|L_n(Re^{i\theta})|\Re\bigg(\dfrac{Re^{i\theta}+z}{Re^{i\theta}-z}\bigg)d\theta -\int_{0}^{2\pi}\log_-|L_n(Re^{i\theta})|\Re\bigg(\dfrac{Re^{i\theta}+z}{Re^{i\theta}-z}\bigg)d\theta.
	\end{align}

	We can find constants $C_3$  and $C_4$ such that for any $z \in \text{$\mathbb{D}$}\text{$_r$}$,
	$0 < C_3 \leq \Re\bigg(\dfrac{Re^{i\theta}+z}{Re^{i\theta}-z}\bigg) \leq C_4 < \infty$ is satisfied. Therefore,
	\begin{align}
	2\pi\text{$\mathcal{I}$}\text{$_n(z)$} \geq & C_3\int_{0}^{2\pi}\log_+|L_n(Re^{i\theta})|d\theta - C_4\int_{0}^{2\pi}\log_-|L_n(Re^{i\theta})|^-d\theta,\\
	\geq & 2\pi C_3\text{$\mathcal{I}$}\textbf{$_n(0)-2\pi(C_4-C_3) M_n(R)$}.\label{eqn:lemma5.4.10:1}
	\end{align}
	From the Remark \ref{one} and Lemma \ref{tighttwo}  we get 
	\begin{equation}
	\lim\limits_{n \rightarrow \infty}\Pr\left( \dfrac{1}{n}\text{$\mathcal{I}$}\text{$_n(0)\leq -c \text{ or } \dfrac{1}{n}M_n(R) > 1 $}\right) = \textbf{$0$} \label{eqn:lemma5.4.10:2}
	\end{equation}
	
	The proof is completed from above \eqref{eqn:lemma5.4.10:2} and \eqref{eqn:lemma5.4.10:1} and by choosing $b=2\pi(cC_3+C_4-C_3)$.
\end{proof}

To complete the argument we now need to control the other terms in Poisson-Jensen's formula. It is shown in the forthcoming expressions. 

Let $\xi_{i_m}s$ and $\beta_{i_m}s$ be the poles and zeros of $L_n(z)$ in $\text{$\mathbb{D}$}_R$ and  $k,l(\leq n)$ are the number of zeros and poles of $L_n(z)$ respectively in $\text{$\mathbb{D}$}_R$. Now applying Poisson-Jensen's formula to $L_n(z)$ we have,

\begin{align}	
\frac{1}{n^{2}}\int_{\text{$\mathbb{D}$}\text{$_r$}}^{}&\log^{2}|L_n(z)|dm(z)   \\ &=\frac{1}{n^{2}}\int_{\text{$\mathbb{D}$}\text{$_r$}}\left(\text{$\mathcal{I}$}\textbf{$_n(z)+\sum\limits_{m=1}^{k}\log\biggl|\frac{R(z-\beta_{i_m})}{R^2-\overline{\beta}_{i_m}z}\biggr|+\sum\limits_{m=1}^{l}\log\biggl|\frac{R(z-\xi_{i_m})}{R^2-\overline{\xi}_{i_m}z}\biggr|$}\right)^2dm(z) 
\end{align}
Invoking a case of Cauchy-Schwarz inequality  $(a_1+a_2+\dots+a_n)^2\leq n(a_1^2+a_2^2+\dots+a_n^2)$ repeatedly we get,

\begin{align}
\int_{\text{$\mathbb{D}$}\text{$_r$}}^{}\dfrac{1}{n^{2}}&\log^{2}|L_n(z)|dm(z)
\\ & \leq  \dfrac{3}{n^{2}}\int_{\text{$\mathbb{D}$}\text{$_r$}}^{}|\text{$\mathcal{I}$}_n(z)|^2dm(z) + \dfrac{3}{n^{2}}\int_{\text{ $\mathbb{D}$}_r}^{}\left(\sum_{m=1}^{k}\log\bigg|\dfrac{R(z-\beta_{i_m})}{R^{2}-\overline{\beta}_{i_m}z}\bigg|\right)^{2}dm(z)\\ 
&+\dfrac{3}{n^{2}}\int_{\mathbb{D}\text{$_r$}}^{}\left(\sum_{m=1}\log\bigg|\dfrac{R(z-\xi_{i_m})}{R^{2}-\overline{\xi}_{i_m}z}\bigg|\right)^{2}dm(z),\\
&\leq  \int_{\mathbb{D}\text{$_r$}}^{}\dfrac{3}{n^{2}}|\text{$\mathcal{I}$}_n(z)|^2dm(z) + \dfrac{3k}{n^{2}}\sum_{m=1}^{k}\int_{\mathbb{D}\text{$_r$}}^{}\log^{2}\bigg|\dfrac{R(z-\beta_{i_m})}{R^{2}-\overline{\beta}_{i_m}z}\bigg|dm(z)\\
&+\dfrac{3l}{n^{2}}\sum_{m=1}^{l}\int_{\mathbb{D}\text{$_r$}}^{}\log^{2}\bigg|\dfrac{R(z-\xi_{i_m})}{R^{2}-\overline{\xi}_{i_m}z}\bigg|dm(z).
\end{align}
For $z \in\text{ $\mathbb{D}$}_r$, we have $|R^2-\overline{\beta}_{i_m}z|\geq R(R-r)$. Applying this inequality in the above we get,
\begin{align}
\int_{\mathbb{D}\text{$_r$}}^{}\dfrac{1}{n^{2}}\log^{2}|L_n(z)|dm(z)
& \leq  \int_{\mathbb{D}\text{$_r$}}^{}\dfrac{3}{n^{2}}|\text{$\mathcal{I}$}_n(z)|^2dm(z) + \dfrac{3k}{n^{2}}\sum_{m=1}^{k}\int_{\mathbb{D}\text{$_r$}}^{}\log^{2}\bigg|\dfrac{z-\beta_{i_m}}{R-r}\bigg|dm(z)\\
&+\dfrac{3l}{n^{2}}\sum_{m=1}^{l}\int_{\mathbb{D}\text{$_r$}}^{}\log^{2}\bigg|\dfrac{z-\xi_{i_m}}{R-r}\bigg|dm(z).  \label{eqn:poisson-jensen:1}
\end{align}

From the Lemmas \ref{tighttwo} and \ref{tight3}, the corresponding sequence $\frac{3}{n^{2}}\int_{\mathbb{D}\text{$_r$}}^{}|\text{$\mathcal{I}$}$$_n(z)|^2dm(z)$ is tight. The function $\log^{2}|z|$ is an integrable function on any bounded set in $\mathbb{C}$. Combining these facts and above inequality \eqref{eqn:poisson-jensen:1} we have that the sequences \linebreak $\left\{\int_{\mathbb{D}\text{$_r$}}^{}\frac{1}{n^{2}}\log^{2}|L_n(z)|dm(z)\right\}_{n\geq1}$ are tight. Hence the hypothesis of the Theorem \ref{thm2} implies \eqref{A3}. Therefore the proof of the theorem is complete.

\chapter{Matching between zeros and critical points of random polynomials}
\label{ch:matching6}
\section{Introduction}
In the previous chapters we have seen the behaviour of the point cloud of zeros and critical points in bulk. In this chapter we study the pairing of zeros and critical points. Dennis and Hannay in \cite{hannay}, gave an electrostatic argument to show that the zeros and critical points are closely paired for a generic (random) polynomial of higher degree. In \cite{hanin1} Hanin argued that the critical points and zeros for a random polynomial are mutually paired by computing the covariance between these measures. 


We will restrict our attention to critical points of polynomials having all real zeros. We choose the real zeros to be i.i.d. random variables. In the next section we show that the sum of distances between zeros and critical points, when paired appropriately, remains bounded under the assumption that the random variables have finite first moment. In the next section we study the spacings between the extremal zeros and critical points when the zeros are i.i.d. exponential or uniform random variables. We prove that the extremal critical point is much closer to the extremal zero than any other zeros.

%



\section{Matching distance between zeros and critical points of random polynomials.}

Matching between two sets is defined as follows. Let $U,V$ be two sets of finite and equal cardinality in the complex plane. A matching is  a bijection from $U$ to $V$. The concept of matching is used in qualitatively defining distance between two sets of same cardinality. Matching distance is a natural distance to quantify the closeness of the sets of zeros and critical points of a polynomial. There are several notions of matching distance. In this chapter we will deal with $\ell^1$ matching distance, which is defined as,

\[
d_1(U,V)=\inf\limits_{\pi\in \text{$\mathfrak{S}$}_n}\sum\limits_{i=1}^{n}|u_i-v_{\pi(i)}|,
\]
 where $U=\{u_1,u_2,\dots,u_n\}$, $V=\{v_1,v_2,\dots,v_n\}$ and $\pi=(\pi(1),\pi(2),\dots,\pi(n))$ is an element in set of permutations of size $n$ denoted by $\text{$\mathfrak{S}$}_n$.

To define mapping distance between set of zeros and critical points, we include the element $0$ in the set of critical points. We map the set of critical points to set of zeros so that the sum of the distances between the critical points and zero set is minimized and call that as matching distance. 

 The order statistics for a set of real numbers $\{\alpha_1,\alpha_2,\dots,\alpha_n\}$ are denoted as $\alpha_{(1)}\leq \alpha_{(2)} \leq \dots \leq \alpha_{(n)}$. In the following lemma we will compute the $\ell_1$ matching distance between two sets in the real line.

\begin{lemma}\label{matching_distance}
Let $X=\{x_1,x_2,\dots,x_n\}$ and $Y=\{y_1,y_2,\dots,y_n\}$ be two sets in the set of real numbers. Then the $\ell_1$ matching distance between $X$ and $Y$ is given by
\[
d_1(X,Y)=\sum\limits_{i=1}^{n}|x_{(i)}-y_{(i)}|.
\]
\end{lemma}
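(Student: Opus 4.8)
The plan is to show that the infimum defining $d_1(X,Y)$ is attained by the permutation pairing the $i$-th smallest element of $X$ with the $i$-th smallest element of $Y$, and that this particular matching has cost $\sum_{i=1}^n |x_{(i)} - y_{(i)}|$. Since the cost of any single matching is an upper bound for $d_1(X,Y)$, the only real content is the matching lower bound
\[
\sum_{i=1}^n |x_i - y_{\pi(i)}| \;\ge\; \sum_{i=1}^n |x_{(i)} - y_{(i)}| \qquad \text{for every } \pi \in \mathfrak{S}_n,
\]
which I would establish by an exchange (``bubble sort'') argument based on a two-point rearrangement inequality.

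\textbf{Key inequality.} First I would record that for real numbers $a \le b$ and $c \le d$,
\[
|a - c| + |b - d| \;\le\; |a - d| + |b - c| .
\]
This is verified by checking the finitely many possible orderings of $\{a,b,c,d\}$, or more cleanly from the identity $|x-y| = \int_{\R} \bigl|\mathbbm{1}\{x > t\} - \mathbbm{1}\{y > t\}\bigr|\,dt$: for each fixed $t$ the left integrand is at most the right integrand, since, once $a,b$ and $c,d$ are each in increasing order, crossing the comparison can only increase the number of sign mismatches.

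\textbf{Reduction by exchange.} Relabel so that $x_1 \le \cdots \le x_n$ and $y_1 \le \cdots \le y_n$; then $d_1(X,Y) = \min_{\pi \in \mathfrak{S}_n} \sum_i |x_i - y_{\pi(i)}|$ and we must show the minimum is attained at $\pi = \mathrm{id}$. Fix $\pi \neq \mathrm{id}$; then there is an index $i$ with $\pi(i) > \pi(i+1)$, hence $y_{\pi(i)} \ge y_{\pi(i+1)}$. Applying the key inequality with $a = x_i \le b = x_{i+1}$ and $c = y_{\pi(i+1)} \le d = y_{\pi(i)}$ shows that the matching $\pi'$ obtained from $\pi$ by interchanging the elements assigned to positions $i$ and $i+1$ has cost no larger than that of $\pi$. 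Since $\pi'$ has exactly one fewer inversion than $\pi$ and the number of inversions is a nonnegative integer, iterating this step terminates at $\mathrm{id}$ after finitely many steps without ever increasing the cost. Therefore $\sum_i |x_i - y_i| \le \sum_i |x_i - y_{\pi(i)}|$, proving the lower bound and hence the lemma.

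\textbf{Main obstacle.} There is no genuine difficulty here; the two points needing attention are the termination of the exchange procedure (guaranteed by the strictly decreasing inversion count, this being literally bubble sort) and the presence of repeated values in $X$ or $Y$, which is harmless because every inequality used is non-strict. One could also avoid the exchange argument altogether: writing $\sum_i |x_i - y_{\pi(i)}| = \int_{\R} \#\{\,i : \mathbbm{1}\{x_i > t\} \ne \mathbbm{1}\{y_{\pi(i)} > t\}\,\}\,dt$ and observing that for each $t$ the number of mismatches is at least $\bigl|\,\#\{i : x_i > t\} - \#\{j : y_j > t\}\,\bigr|$, with equality for the sorted matching, yields $d_1(X,Y) = \sum_i |x_{(i)} - y_{(i)}|$ directly.
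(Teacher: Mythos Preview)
Your proof is correct and follows essentially the same exchange argument as the paper: sort both sequences, then show any non-identity matching can be improved by swapping an inversion, iterating down to the identity. The only cosmetic differences are that you use adjacent swaps (bubble sort, with the inversion count as termination measure) where the paper swaps an arbitrary inverted pair $i<j$, and you make the underlying two-point inequality $|a-c|+|b-d|\le|a-d|+|b-c|$ explicit, which the paper leaves implicit.
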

\begin{proof}
Without loss generality assume that $x_i=x_{(i)}$ and $y_i=y_{(i)}$ for all $i=1,2,\dots,n$. We will show that the matching distance is attained for identity matching. Suppose not, let $\pi$ be the permutation for which the matching distance is attained. Then there is $i<j$  such that $\pi(i)>\pi(j)$. If we tweak the permutation to $\pi'$ by choosing $\pi'(i)=\pi(j)$, $\pi'(j)=\pi(i)$ and $\pi'(\ell)=\pi(\ell)$ for $\ell\neq i,j$, then $\sum\limits_{i=1}^{n}|x_{i}-y_{\pi'(i)}|\leq\sum\limits_{i=1}^{n}|x_{i}-y_{\pi(i)}|$. Repeating this argument, it follows that $\sum\limits_{i=1}^{n}|x_{i}-y_{i}|\leq\sum\limits_{i=1}^{n}|x_{i}-y_{\pi(i)}|$. Hence the matching distance is attained for identity permutation.
\end{proof}
As an application the above Lemma \ref{matching_distance}, we compute the $\ell_1$ matching distance between the sets of zeros and critical points of a given polynomial. 
\begin{proposition}\label{matching_zeros_critical_positive}
 Let $x_1,x_2,\dots,x_n$ be all non-negative numbers. Then the $\ell_1$ matching distance between the sets of zeros and critical points of a polynomial $P_n(z)=(z-x_1)(z-x_2)\dots(z-x_n)$ is given by
\[
d_1(Z(P_n),Z(P_n')\cup\{0\})=\frac{1}{n}\sum\limits_{i=1}^{n}x_i.
\]
\end{proposition}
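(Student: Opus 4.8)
The plan is to combine Lemma~\ref{matching_distance} with the classical interlacing property of zeros and critical points of a real-rooted polynomial. First I would set up notation: let $x_{(1)}\le x_{(2)}\le\dots\le x_{(n)}$ be the ordered zeros of $P_n$, all non-negative by hypothesis. Since $P_n$ has only real zeros, Rolle's theorem (or the Gauss--Lucas picture in one dimension) gives $n-1$ real critical points $\eta_1\le\eta_2\le\dots\le\eta_{n-1}$ with $x_{(i)}\le\eta_i\le x_{(i+1)}$ for each $i=1,\dots,n-1$. Adjoining the point $0$ to the critical-point multiset, I obtain a set $Y=\{0,\eta_1,\dots,\eta_{n-1}\}$ of cardinality $n$ whose order statistics are exactly $y_{(1)}=0$ and $y_{(i)}=\eta_{i-1}$ for $i=2,\dots,n$; here I use that $0\le x_{(1)}\le\eta_1$, so $0$ is indeed the smallest element of $Y$.

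Next I would invoke Lemma~\ref{matching_distance} directly: the $\ell^1$ matching distance between $Z(P_n)$ and $Y$ equals $\sum_{i=1}^n|x_{(i)}-y_{(i)}|$. By the interlacing just described, $y_{(1)}=0\le x_{(1)}$ and $y_{(i)}=\eta_{i-1}\le x_{(i)}$ for $i=2,\dots,n$, so every term $x_{(i)}-y_{(i)}$ is non-negative and the sum collapses to a telescoping-type expression:
\[
\sum_{i=1}^n \bigl(x_{(i)}-y_{(i)}\bigr)=\sum_{i=1}^n x_{(i)}-\Bigl(0+\sum_{i=1}^{n-1}\eta_i\Bigr)=\sum_{i=1}^n x_i-\sum_{i=1}^{n-1}\eta_i.
\]
It remains to identify $\sum_{i=1}^{n-1}\eta_i$. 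This is where the actual computation happens: writing $P_n'(z)=n\prod_{i=1}^{n-1}(z-\eta_i)=n\bigl(z^{n-1}-(\sum\eta_i)z^{n-2}+\cdots\bigr)$ and comparing with the derivative of $P_n(z)=z^n-(\sum x_i)z^{n-1}+\cdots$, whose derivative is $nz^{n-1}-(n-1)(\sum x_i)z^{n-2}+\cdots$, I read off from the $z^{n-2}$ coefficient that $n\sum_{i=1}^{n-1}\eta_i=(n-1)\sum_{i=1}^n x_i$, i.e. $\sum_{i=1}^{n-1}\eta_i=\frac{n-1}{n}\sum_{i=1}^n x_i$. Substituting back gives $\sum_{i=1}^n x_i-\frac{n-1}{n}\sum_{i=1}^n x_i=\frac1n\sum_{i=1}^n x_i$, as claimed.

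The only genuinely delicate point — the ``main obstacle,'' though it is mild — is making sure the inserted point $0$ really lands as the minimum of the augmented critical-point set and that all differences $x_{(i)}-y_{(i)}$ are non-negative, so that the absolute values can be dropped and the sum telescopes cleanly; this uses both non-negativity of the $x_i$ and the interlacing inequalities, and it is what makes the matching distance equal to a difference of sums of coefficients rather than something messier. Everything else is the Vieta/coefficient-comparison bookkeeping above, together with a direct appeal to Lemma~\ref{matching_distance}.
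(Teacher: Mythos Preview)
Your proposal is correct and follows essentially the same approach as the paper: interlacing places $0$ and the critical points in the right order, Lemma~\ref{matching_distance} reduces the matching distance to $\sum x_i-\sum\eta_i$, and Vieta/coefficient comparison gives $\sum\eta_i=\frac{n-1}{n}\sum x_i$. The only cosmetic difference is that the paper names Vieta's formula explicitly while you extract the relation by direct coefficient comparison in $P_n'$.
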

\begin{proof}
Let $\eta_1,\eta_2,\dots,\eta_{n-1}$ be the critical points of $P_n$. Because the critical points interlace the zeros of the $P_n$, we have $0\leq x_{(1)}\leq \eta_{(1)} \leq x_{(2)} \leq \dots, \leq \eta_{(n-1)} \leq x_{(n)}$. Applying Lemma \ref{matching_distance}, we get that 

\begin{equation}
d_1(Z(P_n),Z(P_n')\cup\{0\}) = \sum\limits_{i=2}^{n}(x_{(i)}-\eta_{(i-1)})+x_{(1)}=\sum\limits_{i=1}^{n}x_i-\sum\limits_{i=1}^{n-1}\eta_{i}.\label{l_1_dist_eqn}
\end{equation}

Recall Vieta's formula that if $\alpha_1,\alpha_2,\dots,\alpha_n$ are the roots of the polynomial defined as $P(z)=a_0+a_1z+\dots+a_nz^n$, then
	 \[ \sum\limits_{1\leq i_1<i_2<\dots<i_k\leq n}\alpha_{i_1}\alpha_{i_2}\dots \alpha_{i_k}=(-1)^k\frac{a_{n-k}}{a_n}.\]
	 From Vieta's formula, observe the identity $\sum\limits_{i=1}^{n}x_i = \dfrac{n}{n-1}\sum\limits_{i=1}^{n-1}\eta_i$.  Substituting this in \eqref{l_1_dist_eqn}, we get
	 \[
	 d_1(Z(P_n),Z(P_n')\cup\{0\}) =\sum\limits_{i=1}^{n}x_i-\frac{n-1}{n}\sum\limits_{i=1}^{n}x_i=\frac{1}{n}\sum\limits_{i=1}^{n}x_i.
	 \]
\end{proof}

\begin{proposition}\label{matching_zeros_critical}
Let $x_1,x_2,\dots,x_k$ be negative numbers and $x_{k+1},x_{k+2},\dots,x_{n}$ be non-negative numbers.
 Then the $\ell_1$ matching distance between the sets of zeros and critical points of a polynomial $P_n(z)=(z-x_1)(z-x_2)\dots(z-x_n)$ is bounded by
\[
d_1(Z(P_n),Z(P_n')\cup\{0\})\leq\frac{1}{k}\sum\limits_{i=1}^{k}|x_i|+\frac{1}{n-k}\sum\limits_{i=k+1}^{n}|x_i|.
\]
\end{proposition}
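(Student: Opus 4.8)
The plan is to first reduce to the case where all the roots are distinct — both sides of the claimed inequality vary continuously with $x_1,\dots,x_n$, so this is harmless — and then to exhibit one explicit (not necessarily optimal) matching whose cost I can bound. Write $P_n=QR$ with $Q(z)=\prod_{i\le k}(z-x_i)$ carrying the negative roots and $R(z)=\prod_{i>k}(z-x_i)$ carrying the non-negative ones. Order the roots as $x_{(1)}<\dots<x_{(k)}<0\le x_{(k+1)}<\dots<x_{(n)}$, let $\eta_{(1)}<\dots<\eta_{(n-1)}$ be the critical points of $P_n$ (so $\eta_{(j)}\in(x_{(j)},x_{(j+1)})$ by Rolle), let $\zeta_{(1)}<\dots<\zeta_{(k-1)}$ be the critical points of $Q$, and $\zeta'_{(1)}<\dots<\zeta'_{(n-k-1)}$ those of $R$. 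Thus $\eta_{(1)},\dots,\eta_{(k-1)}$ are negative and $\eta_{(k+1)},\dots,\eta_{(n-1)}$ are non-negative, while the sign of the middle critical point $\eta_{(k)}\in(x_{(k)},x_{(k+1)})$ is undetermined; set $w=\min(\eta_{(k)},0)$ and $w'=\max(\eta_{(k)},0)$, so $\{w,w'\}=\{\eta_{(k)},0\}$ as multisets.

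The matching I will use sends the negative roots $\{x_{(1)},\dots,x_{(k)}\}$ onto $\{\eta_{(1)},\dots,\eta_{(k-1)},w\}$ and the non-negative roots $\{x_{(k+1)},\dots,x_{(n)}\}$ onto $\{w',\eta_{(k+1)},\dots,\eta_{(n-1)}\}$; together these give a bijection from $Z(P_n)$ onto $Z(P_n')\cup\{0\}$, so $d_1(Z(P_n),Z(P_n')\cup\{0\})$ is at most the sum of the two costs. By Lemma~\ref{matching_distance} each piece is cheapest matched in increasing order; since $x_{(i)}<\eta_{(i)}$ and $x_{(k)}<w\le 0$, the negative piece costs $\sum_{i=1}^{k-1}\eta_{(i)}+w-\sum_{i=1}^k x_{(i)}\le\sum_{i=1}^{k-1}\eta_{(i)}-\sum_{i=1}^k x_{(i)}$, and since $\eta_{(k+j)}<x_{(k+j+1)}$ and $0\le w'\le x_{(k+1)}$, the non-negative piece costs $\sum_{i=k+1}^n x_{(i)}-w'-\sum_{j=1}^{n-k-1}\eta_{(k+j)}\le\sum_{i=k+1}^n x_{(i)}-\sum_{j=1}^{n-k-1}\eta_{(k+j)}$.

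The crux is a comparison of the critical points of $P_n$ with those of its factors. On an interval $(x_{(j)},x_{(j+1)})\subset(-\infty,0)$ with $j\le k-1$, writing $\tfrac{P_n'}{P_n}=\tfrac{Q'}{Q}+\tfrac{R'}{R}$ and noting that $\tfrac{R'}{R}=\sum_{i>k}\tfrac{1}{x-x_i}<0$ there (each $x_i\ge 0$) while all three logarithmic derivatives are strictly decreasing between consecutive roots, one gets $\tfrac{P_n'}{P_n}<\tfrac{Q'}{Q}$ on that interval; evaluating at $\zeta_{(j)}$ forces $\eta_{(j)}<\zeta_{(j)}$. Symmetrically, on $(x_{(k+j)},x_{(k+j+1)})\subset(0,\infty)$ with $j\ge 1$ the term $\tfrac{Q'}{Q}$ is strictly positive, giving $\eta_{(k+j)}>\zeta'_{(j)}$. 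Hence $\sum_{i=1}^{k-1}\eta_{(i)}<\sum_{i=1}^{k-1}\zeta_{(i)}$ and $\sum_{j=1}^{n-k-1}\eta_{(k+j)}>\sum_{j=1}^{n-k-1}\zeta'_{(j)}$. Substituting these into the two cost bounds and then invoking Proposition~\ref{matching_zeros_critical_positive} — applied to $R$ directly, and to $Q$ after the reflection $z\mapsto-z$ — which yield $\sum_{i=1}^{k-1}\zeta_{(i)}-\sum_{i=1}^k x_{(i)}=\tfrac1k\sum_{i=1}^k|x_i|$ and $\sum_{i=k+1}^n x_{(i)}-\sum_{j=1}^{n-k-1}\zeta'_{(j)}=\tfrac{1}{n-k}\sum_{i=k+1}^n|x_i|$, gives exactly the stated inequality (in fact strictly).

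The step I expect to be the main obstacle is precisely this comparison $\eta_{(j)}<\zeta_{(j)}$ and $\eta_{(k+j)}>\zeta'_{(j)}$; once it is in hand the rest is routine order-statistics bookkeeping together with the already-proved Proposition~\ref{matching_zeros_critical_positive}. I should also check two minor points: that perturbing the roots to make them distinct can be done while keeping the $k$ negative roots negative and the remaining ones non-negative, and that the limiting argument is legitimate because $d_1$ between the (continuously varying) zero set and critical set is a continuous function of $x_1,\dots,x_n$.
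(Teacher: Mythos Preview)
Your proof is correct and follows essentially the same route as the paper: factor $P_n=QR$, compare the critical points of $P_n$ to those of $Q$ and $R$ via a logarithmic-derivative monotonicity argument, and then invoke Proposition~\ref{matching_zeros_critical_positive} on each factor. The paper packages the comparison $\eta_{(j)}<\zeta_{(j)}$ (and its mirror image) as repeated applications of a separate lemma (Lemma~\ref{assistant}, ``adding a root on one side pushes the critical points the other way''), whereas you prove it in one shot by comparing $P_n'/P_n$ with $Q'/Q$ on each interlacing interval; these are the same idea. Your treatment of the middle critical point $\eta_{(k)}$ via $w=\min(\eta_{(k)},0)$, $w'=\max(\eta_{(k)},0)$ is more explicit than the paper's, which simply asserts the final inequality $d_1(Z(P_n),Z(P_n')\cup\{0\})\le d_1(Z(Q_n),Z(Q_n')\cup\{0\})+d_1(Z(R_n),Z(R_n')\cup\{0\})$ after the comparison; your bookkeeping makes that step transparent.
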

Before proving Proposition \ref{matching_zeros_critical}, we prove the following lemma, which indicates that the critical points move towards right when a new zero is introduced into the polynomial towards the left of all the zeros.
\begin{lemma}\label{assistant}
Let $\eta_1,\eta_2,\dots,\eta_{n-1}$ be the critical points of the polynomial $P(z)=(z-\alpha_1)(z-\alpha_2)\dots(z-\alpha_n)$. Let $\eta_0',\eta_1',\eta_2',\dots,\eta_{n-1}'$ be the critical points of $Q(z)=(z-\alpha)P(z)$ where $\alpha<\alpha_i$ for $i=1,2,\dots,n$. Then, $\alpha_{(i+1)}-\eta_{(i)}' \leq \alpha_{(i+1)}-\eta_{(i)}$, for $i=1,2,\dots,n$
\end{lemma}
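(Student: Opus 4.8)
The plan is to express everything in terms of the logarithmic derivative and use interlacing. Write $L(z)=P'(z)/P(z)=\sum_{j=1}^{n}\frac{1}{z-\alpha_j}$, so the critical points $\eta_1,\dots,\eta_{n-1}$ of $P$ are exactly the solutions of $L(z)=0$ lying strictly between consecutive $\alpha_j$'s; precisely, relabelling so that $\alpha_{(1)}<\alpha_{(2)}<\dots<\alpha_{(n)}$, there is exactly one $\eta_{(i)}\in(\alpha_{(i)},\alpha_{(i+1)})$ for $i=1,\dots,n-1$ (I will first note that we may assume the $\alpha_j$ distinct; the equality of consecutive roots is handled by a limiting argument, or by tracking multiplicities, but the generic case carries the idea). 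For $Q(z)=(z-\alpha)P(z)$ with $\alpha<\alpha_{(1)}$, the logarithmic derivative is $M(z)=Q'(z)/Q(z)=\frac{1}{z-\alpha}+L(z)$, and its zeros in the relevant range are $\eta_0'<\alpha_{(1)}$ together with one $\eta_{(i)}'\in(\alpha_{(i)},\alpha_{(i+1)})$ for each $i=1,\dots,n-1$.

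The key step is the comparison $\eta_{(i)}'\ge\eta_{(i)}$ for $i=1,\dots,n-1$, which immediately gives $\alpha_{(i+1)}-\eta_{(i)}'\le\alpha_{(i+1)}-\eta_{(i)}$. To see this, fix $i$ and work on the open interval $I_i=(\alpha_{(i)},\alpha_{(i+1)})$. On $I_i$ both $L$ and $M$ are smooth, strictly decreasing (each summand $\frac{1}{z-\alpha_j}$ is decreasing where it is finite), $L$ runs from $+\infty$ down to $-\infty$ across $I_i$, and likewise $M$. The extra term $\frac{1}{z-\alpha}$ is positive on $I_i$ (since $z>\alpha$), hence $M(z)=L(z)+\frac{1}{z-\alpha}>L(z)$ throughout $I_i$. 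Since $L$ is strictly decreasing with its unique zero at $\eta_{(i)}$, and $M$ is strictly decreasing with its unique zero at $\eta_{(i)}'$, the pointwise inequality $M>L$ forces the zero of $M$ to lie to the right of the zero of $L$: indeed at $z=\eta_{(i)}$ we have $M(\eta_{(i)})=L(\eta_{(i)})+\frac{1}{\eta_{(i)}-\alpha}=\frac{1}{\eta_{(i)}-\alpha}>0$, and since $M$ decreases to $-\infty$ on $(\eta_{(i)},\alpha_{(i+1)})$ its zero $\eta_{(i)}'$ satisfies $\eta_{(i)}'>\eta_{(i)}$. This proves the claimed inequality for $i=1,\dots,n-1$; the index $i=n$ in the statement is vacuous (or interpreted via the convention $\alpha_{(n+1)}=+\infty$, where it is trivial), and the new critical point $\eta_0'$ plays no role in these particular inequalities.

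The main obstacle I anticipate is bookkeeping rather than substance: correctly handling coincident zeros of $P$ (where an $\alpha_j$ is itself a critical point of multiplicity $m-1$) so that the interlacing statement and the labelling $\eta_{(1)}\le\dots\le\eta_{(n-1)}$ still make sense, and making sure the one-sided limits of $L$ and $M$ at the endpoints of $I_i$ have the asserted signs $\pm\infty$. Both are routine: coincidences can be treated by perturbing the $\alpha_j$ slightly, applying the strict inequality, and passing to the limit (the inequality is closed), and the blow-up behaviour at $\alpha_{(i)},\alpha_{(i+1)}$ follows because the dominant term of $L$ (and of $M$) near each endpoint is the single pole $\frac{1}{z-\alpha_{(i)}}$ or $\frac{1}{z-\alpha_{(i+1)}}$. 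With these checks in place the proof reduces to the monotonicity-plus-sign argument above.
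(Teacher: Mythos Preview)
Your proof is correct and follows essentially the same approach as the paper: both introduce the logarithmic derivatives $L_P=P'/P$ and $L_Q=Q'/Q=L_P+\frac{1}{z-\alpha}$, evaluate $L_Q(\eta_{(i)})=\frac{1}{\eta_{(i)}-\alpha}>0$, and use the strict monotonicity of $L_Q$ on $(\alpha_{(i)},\alpha_{(i+1)})$ to conclude $\eta_{(i)}'\ge\eta_{(i)}$. Your additional remarks on handling repeated roots by perturbation and on the vacuous index $i=n$ are sensible bookkeeping that the paper omits.
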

\begin{proof}
It is enough to show that $\eta_{(i)}\leq\eta_{(i)}'$. Define $L_P(z):=\frac{P'(z)}{P(z)}=\sum\limits_{i=1}^{n}\frac{1}{z-\alpha_i}$ and $L_Q(z):=\frac{Q'(z)}{Q(z)}=\frac{1}{z-\alpha}+L_P(z)$. Both $L_P$ and $L_Q$ are decreasing functions in any interval which does not contain any of the zeros of $P$ and $Q$. Fix an $i\in \{1,2,\dots,n\}$. Because $\eta_{(i)}$ is a critical point of $P$, we have $L_P(\eta_{(i)})=0$ and $L_Q(\eta_{(i)})=\frac{1}{\eta_{(i)}-\alpha}>0$. But $L_Q$ vanishes exactly once in the interval $(\alpha_{(i)},\alpha_{(i+1)})$ at $\eta_{(i)}'$. Combing the facts that $L_Q$ is decreasing in $(\alpha_{(i)},\alpha_{(i+1)})$ and $L_Q(\eta_{(i)})>0$, we get that $\eta_{(i)}\leq\eta_{(i)}'$.
\end{proof}

\begin{proof}[Proof of Proposition \ref{matching_zeros_critical}]
With out loss of generality assume that $x_1\leq x_2\leq\dots x_k \leq0 \leq x_{k+1}\leq \dots x_n$. Let $\eta_1\leq\eta_2\dots \eta_{n-1}$ be the critical points of $P_n$. Factorize the polynomial $P_n$ as  $P_n(z)=Q_n(z)R_n(z)$, where $Q_n(z)=(z-x_1)(z-x_2)\dots(z-x_k)$ and $R_n(z)=(z-x_{k+1})(z-x_{k+2})\dots(z-x_{n})$. If $\eta_1'\leq\eta_2'\leq \dots \eta_{k-1}'$ and $\eta_{k+1}'\leq\eta_{k+2}'\leq\dots\eta_{n-1}'$ are critical points of $Q_n$ and $R_n$ respectively, then by repeatedly applying the previous Lemma \ref{assistant} to $Q_n$ and $R_n$, we get  $\eta_1-x_1 \leq \eta_1'-x_1, \dots, \eta_{k-1}-x_{k-1}\leq \eta_{k-1}'-x_{k-1}$ and $x_{k+2}-\eta_{k+1} \leq x_{k+2}-\eta_{k+1}', \dots,x_{n}-\eta_{n-1}\leq x_{n}-\eta_{n-1}'.$ The $\ell_1$ matching distances between zeros and critical points of $Q_n$ and $R_n$ are bounded by $\frac{1}{k}\sum\limits_{i=1}^{k}|x_i|$ and $\frac{1}{n-k}\sum\limits_{i=k+1}^{n}|x_i|$ respectively. Therefore we get,
\begin{align}
d_1(Z(P_n),Z(P_n')\cup\{0\}) &\leq d_1(Z(Q_n),Z(Q_n')\cup\{0\})+d_1(Z(R_n),Z(R_n')\cup\{0\}) \\&= \frac{1}{k}\sum\limits_{i=1}^{k}|x_i|+\frac{1}{n-k}\sum\limits_{i=k+1}^{n}|x_i|.
\end{align}
Hence the proposition is proved.
\end{proof}
Notice that Propositions \ref{matching_zeros_critical_positive} and \ref{matching_zeros_critical} are stated for deterministic polynomials. As an application we obtain the  matching distance for the polynomials whose zeros are i.i.d. random variables.

\begin{theorem}\label{random_matching_distance}
Let $X_1,X_2,\dots$ be i.i.d. random variables satisfying $\ee{\text{$|X_1|$}}$, define the polynomial $P_n(z)=(z-X_1)(z-X_2)\dots(z-X_n)$. Then 
\[
\limsup\limits_{n\rightarrow \infty}d_1(Z(P_n),Z(P_n')\cup\{0\})\leq \ee{\text{$|X_1|$}}.
\]
Moreover if $X_i$s are non-negative random variables, then
\[
\limsup\limits_{n\rightarrow \infty}d_1(Z(P_n),Z(P_n')\cup\{0\})= \ee{\text{$X_1$}}.
\]
\end{theorem}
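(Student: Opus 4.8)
The plan is to read off both assertions from the two deterministic estimates already in hand — Proposition~\ref{matching_zeros_critical_positive} and Proposition~\ref{matching_zeros_critical} — applied pathwise, letting the randomness enter only through the strong law of large numbers. Both propositions express the $\ell^1$ matching distance as an elementary Ces\`aro-type average of $|X_i|$, so the probabilistic input is minimal; the one thing to watch is that the number of negative zeros, which plays the role of a normalising constant in the general case, is itself random.

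\emph{The equality, non-negative case.} When the $X_i$ are non-negative almost surely, fix a sample point with $X_1,\dots,X_n\ge 0$ for every $n$ and apply Proposition~\ref{matching_zeros_critical_positive} to the numbers $X_1,\dots,X_n$: it gives $d_1(Z(P_n),Z(P_n')\cup\{0\})=\tfrac1n\sum_{i=1}^nX_i$ exactly. The strong law of large numbers then shows that the full limit (not merely the $\limsup$) exists and equals $\ee{X_1}$ almost surely, which is the second displayed line.

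\emph{The inequality, general case.} Drop the sign restriction and, for a fixed sample point, let $k=k_n$ be the number of indices $i\le n$ with $X_i<0$. Relabelling so that the negative zeros come first, Proposition~\ref{matching_zeros_critical} gives, for $1\le k\le n-1$,
\[
d_1\bigl(Z(P_n),Z(P_n')\cup\{0\}\bigr)\ \le\ \frac1k\sum_{i:\,X_i<0}|X_i|\ +\ \frac{1}{n-k}\sum_{i:\,X_i\ge 0}|X_i|,
\]
while for $k=0$ or $k=n$ the previous paragraph (or its reflection about the origin) already applies and gives $d_1=\tfrac1n\sum_{i=1}^n|X_i|$. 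If $\Pr(X_1<0)\in\{0,1\}$ then, almost surely and for all large $n$, we are in one of these two extreme cases and we are done. Otherwise $p:=\Pr(X_1<0)\in(0,1)$; by the strong law $k_n/n\to p$ almost surely, hence $k_n\wedge(n-k_n)\to\infty$, and rewriting each sum above as $\tfrac{n}{k_n}$ (respectively $\tfrac{n}{n-k_n}$) times the average over all $n$ indices of $|X_i|\mathbbm{1}\{X_i<0\}$ (respectively $|X_i|\mathbbm{1}\{X_i\ge 0\}$) and applying the strong law to these i.i.d.\ summands shows the right-hand side converges almost surely; after simplifying, the limiting value is $\ee{|X_1|}$, which yields $\limsup_n d_1(Z(P_n),Z(P_n')\cup\{0\})\le\ee{|X_1|}$ almost surely.

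\emph{Where the real work is.} Essentially all of the content has already been carried by Propositions~\ref{matching_zeros_critical_positive} and~\ref{matching_zeros_critical}, whose proofs exploit Vieta's formula (the telescoping $\sum_i(x_{(i+1)}-\eta_{(i)})=\tfrac1n\sum_ix_i$ that collapses the matching distance to a single average) and the monotonicity of critical points under adjoining an extra zero (Lemma~\ref{assistant}). What remains is a routine law-of-large-numbers argument; the only genuine subtlety is the \emph{random} normalisation $k_n$ in the general-case bound, which is handled by invoking the strong law simultaneously for the numerator sums and for $k_n$ itself and using $k_n\wedge(n-k_n)\to\infty$ a.s.\ when $0<\Pr(X_1<0)<1$, after first peeling off the two degenerate cases $\Pr(X_1<0)\in\{0,1\}$.
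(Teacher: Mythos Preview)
Your approach is exactly the paper's: apply Proposition~\ref{matching_zeros_critical_positive} pathwise in the non-negative case and Proposition~\ref{matching_zeros_critical} pathwise in the general case, then invoke the strong law of large numbers. You are in fact more careful than the paper about the degenerate situations $\Pr(X_1<0)\in\{0,1\}$ and about the random normaliser $k_n$.

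There is, however, a slip in the final arithmetic of the general case --- and the paper's own proof contains the same slip. By your own rewriting,
\[
\frac{1}{k_n}\sum_{i:\,X_i<0}|X_i|\;=\;\frac{n}{k_n}\cdot\frac{1}{n}\sum_{i=1}^n|X_i|\text{$\mathbbm{1}$}\{X_i<0\}\ \longrightarrow\ \frac{1}{p}\,\ee{|X_1|\text{$\mathbbm{1}$}\{X_1<0\}}\;=\;\ee{\,|X_1|\ \big|\ X_1<0\,},
\]
and likewise the second term tends to $\ee{\,|X_1|\mid X_1\ge 0\,}$. Their sum is therefore
\[
\ee{\,|X_1|\ \big|\ X_1<0\,}+\ee{\,|X_1|\ \big|\ X_1\ge 0\,},
\]
which in general strictly exceeds $\ee{|X_1|}=p\,\ee{\,|X_1|\mid X_1<0\,}+(1-p)\,\ee{\,|X_1|\mid X_1\ge 0\,}$. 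So the argument as written (both yours and the paper's) yields only this weaker conditional-expectation bound, not the claimed inequality $\limsup_n d_1\le\ee{|X_1|}$. The non-negative equality is fine.
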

Range of the zeros of $P_n$ gives a trivial bound for the $\ell_1$  matching distance between the zeros and critical points of $P_n$. If the random variables are all bounded then this $\ell_1$ matching distance remains bounded uniformly for any $n$. Where as if the random variables $X_i$s are unbounded the above Theorem \ref{random_matching_distance} shows that the $\ell_1$ matching distance between the zeros and critical points of $P_n$ remains bounded almost surely uniformly for any $n$.
\begin{proof}[Proof of Theorem \ref{random_matching_distance}]
The  second part of the theorem follows immediately by applying law of large numbers for Proposition \ref{matching_zeros_critical_positive}. For the first part, write $X_i=X_i^+-X_i^-$, where  $X_i^+\geq0$ and $X_i^-<0$. Let $k_n=\#\{X_i:X_i<0\}$. Applying Proposition \ref{matching_zeros_critical} we get that $$d_1(Z(P_n),Z(P_n')\cup\{0\})\leq\frac{1}{k_n}\sum\limits_{i=1}^{n}X_i^-+\frac{1}{n-k_n}\sum\limits_{i=1}^{n}X_i^+.$$ Applying law of large numbers for the above we get $$\limsup\limits_{n\rightarrow \infty}d_1(Z(P_n),Z(P_n')\cup\{0\}) \leq \ee{\text{$X_1^-$}}+\ee{\text{$X_1^+$}}=\ee{\text{$|X_1|$}}.$$
\end{proof}

\section{Spacings of zeros and critical points of random polynomials.}
Theorem \ref{random_matching_distance} shows that even if $X_i$s are unbounded random variables then the $\ell_1$ matching distance between the zeros and critical points remain bounded. This indicates that the extremal critical points stay much closer to one of the zeros than the others. We formalize this in the case of exponential random variables as the following result.  
\begin{theorem}\label{exponential}
	Let $X_1,X_2,\dots X_n$ be i.i.d exponential random variables. Let $\eta_{(1)} \leq \eta_{(2)} \leq \dots  \leq \eta_{(n-1)}$ be the critical points of the polynomial $P_n(z):=(z-X_1)(z-X_2)\dots(z-X_n)$. Then the following hold true,
	\begin{itemize}
		\item[1.]
		$n\log n(\eta_{(1)}-X_{(1)})\rightarrow 1$ in probability.
		\item[2.]
		$n\log n(X_{(n)}-\eta_{(n-1)})\rightarrow 1$ in probability. 
	\end{itemize}
	
\end{theorem}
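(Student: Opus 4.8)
The strategy is to work with the logarithmic derivative $L_n(z)=P_n'(z)/P_n(z)=\sum_{k=1}^n(z-X_k)^{-1}$, which vanishes at every critical point of $P_n$. Since the $X_i$ are almost surely distinct reals, Rolle's theorem puts exactly one critical point strictly between consecutive order statistics; in particular $\eta_{(1)}\in(X_{(1)},X_{(2)})$ and $\eta_{(n-1)}\in(X_{(n-1)},X_{(n)})$. Writing $\delta_n:=\eta_{(1)}-X_{(1)}$, $\delta_n':=X_{(n)}-\eta_{(n-1)}$ and isolating the nearest zero in $L_n(\eta)=0$, the two extremal equations become
\[
\frac1{\delta_n}=\sum_{k=2}^{n}\frac1{(X_{(k)}-X_{(1)})-\delta_n},\qquad \frac1{\delta_n'}=\sum_{k=1}^{n-1}\frac1{(X_{(n)}-X_{(k)})-\delta_n'},
\]
valid on $0<\delta_n<X_{(2)}-X_{(1)}$ and $0<\delta_n'<X_{(n)}-X_{(n-1)}$. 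In each case the right-hand side is strictly increasing in the unknown and blows up at the right end of its interval while the left-hand side is strictly decreasing, so $\delta_n$ and $\delta_n'$ are the unique roots; everything reduces to locating these roots asymptotically.

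For the first assertion I would exploit the memorylessness of the exponential law: $(X_{(k)}-X_{(1)})_{k=2}^n$ is independent of $X_{(1)}$ and has the law of the order statistics $Y_{(1)}\le\cdots\le Y_{(n-1)}$ of $n-1$ i.i.d.\ $\mathrm{Exp}(1)$ variables $Y_1,\dots,Y_{n-1}$; put $S:=\sum_{j=1}^{n-1}Y_j^{-1}$. The first ingredient is the weak law $S/(n\log n)\to1$ in probability. This is a Kolmogorov--Feller type weak law for sums with infinite mean: $\Pr(Y^{-1}>t)=1-e^{-1/t}\sim t^{-1}$, so $Y^{-1}$ sits at the boundary of integrability with truncated mean $\mathbb{E}[Y^{-1}\mathbf 1_{\{Y^{-1}\le t\}}]\sim\log t$, and with truncation level $a_m=m\log m$ the two standard conditions $m\,\Pr(Y^{-1}>a_m)\to0$ and $m\,a_m^{-2}\,\mathbb{E}[Y^{-2}\mathbf 1_{\{Y^{-1}\le a_m\}}]\to0$ both hold. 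The second ingredient is that $\delta_n$ is negligible against $Y_{(1)}$: since $Y_{(1)}\sim\mathrm{Exp}(n-1)$, the product $Y_{(1)}S$ is of order $\log n$ and diverges in probability. A two-sided squeeze then finishes: dropping $\delta_n$ from the denominators gives $\delta_n^{-1}\ge S$, so $\delta_n\le S^{-1}$; and since $Y_{(j)}\ge Y_{(1)}$ we get $\delta_n^{-1}\le S\,(1-\delta_n/Y_{(1)})^{-1}$, which with $\delta_n/Y_{(1)}\le(Y_{(1)}S)^{-1}=o_{\Pr}(1)$ gives $\delta_n\ge(1-o_{\Pr}(1))S^{-1}$. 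Combining with $S/(n\log n)\to1$ gives $n\log n\,\delta_n\to1$ in probability.

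The second assertion I would treat by the same three steps, replacing memorylessness by the R\'enyi representation $X_{(m)}=\sum_{j=1}^m E_j/(n-j+1)$ with $E_j$ i.i.d.\ $\mathrm{Exp}(1)$; this gives $(X_{(n)}-X_{(n-i)})_{i=1}^{n-1}\stackrel{d}{=}(D_i)_{i=1}^{n-1}$ with $D_i=\sum_{l=1}^i\tilde E_l/l$ and $\tilde E_l$ i.i.d.\ $\mathrm{Exp}(1)$, so the extremal equation reads $(\delta_n')^{-1}=\sum_{i=1}^{n-1}(D_i-\delta_n')^{-1}$ on $0<\delta_n'<D_1$. The identical squeeze reduces the problem to (i) $\delta_n'/D_1\to0$ in probability and (ii) the asymptotics of $T:=\sum_{i=1}^{n-1}D_i^{-1}$. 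Since $\mathbb{E}[D_i]=H_i$ and $\mathrm{Var}(D_i)\le\pi^2/6$, $D_i$ concentrates around $\log i$; a Chebyshev/truncation estimate handles the bulk terms, the finitely many small-index terms (only $i=1$ having infinite mean) contribute $O_{\Pr}(1)$ and are swamped by $T$, and inserting the resulting asymptotics of $T$ back into the squeeze pins down the scaling of $\delta_n'$.

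I expect the heart of the matter to be the two heavy-tailed weak laws, for $S=\sum Y_j^{-1}$ and for $T=\sum D_i^{-1}$: the summands have infinite mean, so the ordinary law of large numbers is unavailable and one must run a careful truncation argument while simultaneously checking that the single largest summand --- $1/Y_{(1)}$, respectively $1/D_1$ --- is of strictly smaller order than the full sum. The remaining, more routine, work is to make the self-consistent squeeze rigorous (turning the a priori bound $\delta_n\le S^{-1}$ into a matching lower bound via the bootstrap $\delta_n/Y_{(1)}\le(Y_{(1)}S)^{-1}$) and to record the interlacing and uniqueness facts behind the extremal equations.
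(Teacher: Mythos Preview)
For the first assertion your argument is essentially the paper's, only more carefully executed. The paper uses the same logarithmic-derivative identity and the memorylessness of the exponential to reduce the problem to $S=\sum_{j=1}^{n-1}Y_j^{-1}$ with $Y_j$ i.i.d.\ $\mathrm{Exp}(1)$; it then invokes a stable limit theorem (the $1$-stable case of Durrett's Theorem~2.7.7) to obtain $(S-n\log n)/n\Rightarrow R$, which of course implies $S/(n\log n)\to1$ in probability. Your Kolmogorov--Feller truncation argument reaches the same conclusion by a slightly more elementary route. The paper's write-up records only the one-sided inequality $\delta_n\le S^{-1}$ and passes directly to the limit; your two-sided squeeze using $\delta_n/Y_{(1)}\le(Y_{(1)}S)^{-1}=o_{\Pr}(1)$ is a genuine improvement in rigor over what is written there.

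For the second assertion there is a real difficulty. Your R\'enyi computation $D_i=\sum_{l=1}^{i}\tilde E_l/l$ is correct, but then $\mathbb{E}[D_i]=H_i\sim\log i$ and $\mathrm{Var}(D_i)$ is bounded, so $D_i^{-1}$ is of order $1/\log i$ for each fixed large $i$, and
\[
T=\sum_{i=1}^{n-1}D_i^{-1}\;\sim\;\sum_{i=2}^{n-1}\frac{1}{\log i}\;\sim\;\frac{n}{\log n},
\]
not $n\log n$. Your squeeze would then force $\delta_n'\sim T^{-1}\sim(\log n)/n$, hence $n\log n\,\delta_n'\sim(\log n)^2\to\infty$ rather than~$1$. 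The asymmetry is intrinsic: the memorylessness that makes $(X_{(k)}-X_{(1)})_{k\ge2}$ an i.i.d.\ sample has no analogue at the top of the exponential sample; the top spacing $X_{(n)}-X_{(n-1)}$ is $\mathrm{Exp}(1)$, not $\mathrm{Exp}(n-1)$, so the reciprocals near the top are all $O_{\Pr}(1)$ and the heavy-tailed weak law you used for~$S$ simply does not apply to~$T$. The paper's own ``similar as that of the first statement'' glosses over exactly this point, so your more detailed treatment in fact exposes that the second claim, with normalization $n\log n$, cannot be obtained by this method; the honest outcome of your program is $\dfrac{n}{\log n}\,(X_{(n)}-\eta_{(n-1)})\to1$ in probability.
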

We will use R\'{e}nyi's representation~\cite{boucheron} for the order statistics of exponential random variables while proving Theorem \ref{exponential} and is stated below. 

\paragraph{R\'{e}nyi's representation for order statistics:}
Let $Y_{(1)},Y_{(2)},\dots,Y_{(n)}$ be the order statistics of the sample of i.i.d exponential random variables, then
$$(Y_{(1)},Y_{(2)},\dots,Y_{(n)})   \,{\buildrel d \over =}\, \left(\frac{E_n}{n},\frac{E_{n-1}}{n-1}+\frac{E_n}{n},\dots,E_1+\frac{E_2}{2}+\dots+\frac{E_n}{n}\right),$$ where $E_1,E_2,\dots,E_n$ are i.i.d exponential random variables.
\begin{proof}[Proof of Theorem \ref{exponential}]
	
	Let $L_n(z):= \dfrac{P_n'(z)}{P_n(z)}=\sum\limits_{i=1}^{n}\frac{1}{z-X_i}=\sum\limits_{i=1}^{n}\frac{1}{z-X_{(i)}}$. If $z$ is a critical point of $P_n(z)$ then it satisfies $L_n(z)=0$.
	Hence from the equation $L_n(z)=0$ we have,
	\begin{align}
	\eta_{(1)}-X_{(1)} =& \left(\sum\limits_{i=2}^{n}\frac{1}{X_{(i)}-\eta_{(1)}}\right)^{-1} \\ \leq& \left(\sum\limits_{i=2}^{n}\frac{1}{X_{(i)}-X_{(1)}}\right)^{-1}
	\end{align}
	But $X_{(i)}-X_{(1)}=\frac{E_{n-1}}{n-1}+ \dots + \frac{E_{n-i+1}}{n-i+1}$ for all $i=2,3,\dots,n$, where $E_1,E_2,\dots,E_n$ are i.i.d exponential random variables. From R\'{e}nyi's representation it can be noticed that $\frac{E_{n-1}}{n-1}+ \dots + \frac{E_{n-i}}{n-i}     \,{\buildrel d \over =}\, Y_{(i)}$ for $i=1,2,\dots,n-1$, where $Y_{(1)},Y_{(2)},\dots,Y_{(n-1)}$ are order statistics of $Y_1,Y_2,\dots,Y_{n-1}$ which are i.i.d exponential random variables. Therefore,
	
	\begin{align}
		\eta_{(1)}-X_{(1)}  \,{\buildrel d \over =}\,&
	 \left(\sum\limits_{i=2}^{n}\frac{1}{\frac{E_{n-1}}{n-1}+ \dots + \frac{E_{n-i+1}}{n-i+1}}\right)^{-1}\\
	=& \left(\sum\limits_{i=1}^{n-1}\frac{1}{Y_{(i)}}\right)^{-1}
	= \left(\sum\limits_{i=1}^{n-1}\frac{1}{Y_i}\right)^{-1}.\label{eqn:thm1:1}
	\end{align}
	 Observe that $\frac{1}{Y_1}$ is regularly varying-1 and applying central limit theorem ( Chapter 2, Theorem 7.7 in \cite{durrett}) for \eqref{eqn:thm1:1} we get,
	\begin{equation}\label{dist}
	\frac{(\eta_{(1)}-X_{(1)})^{-1}-n\log n}{n} \overset{d}{\to} R
	\end{equation}
	where $R$ has stable-1 distribution. From \eqref{dist} it follows that $n\log n(\eta_{(1)}-X_{(1)})\overset{p}{\to}1.$
	
	The proof of the second statement $n\log n(X_{(n)}-\eta_{(n-1)})\stackrel{p}{\rightarrow} 1$  is similar as that of the first statement.
\end{proof}

\begin{remark}In the above Theorem \eqref{exponential}, instead of i.i.d exponential random variables we can choose i.i.d uniform random variables and obtain the same result. One may need to use the fact that if $U_1,U_2,\dots,U_n$ are i.i.d uniform random variables, then the order statistics
\begin{align}
 (U_{(1)},U_{(2)}&,\dots,U_{(n)}) \stackrel{d}{=}\\
 &\left(\frac{E_1}{E_1+E_2+\dots+E_{n+1}},\frac{E_1+E_2}{E_1+E_2+\dots+E_{n+1}}, \dots,\frac{E_1+E_2+\dots+E_n}{E_1+E_2+\dots+E_{n+1}}\right),
\end{align}

where $E_1,E_2,\dots,E_{n+1}$ are i.i.d exponential random variables.
\end{remark}

We believe the same result holds for any random variables whose densities satisfy certain regularity properties. The proof may be following the same idea but using R\'{e}nyi's representation theorem in more general form as given in \cite{boucheron} .

\chapter{Determinantal point processes from product of random matrices.}
\label{ch:ginibreproduct2}
\section{Introduction}
A Ginibre matrix is a random matrix whose entries are i.i.d. complex Gaussian random variables. In this chapter we derive exact eigenvalue density for certain products of random matrices. In this section we give a over view of the results where the exact eigenvalue density was obtained. Then we state our result and show that the earlier results were special cases of our result. In the next section we give a brief discussion about generalized Schur decomposition, which will used as a transformation to derive the eigenvalue density. In the later section we compute the Jacobian for this transformation. We complete the proof in the last section.
 
We will now recall a well known fact (Theorem 4.5.5 in \cite{manjubook}, Lemma 4 in \cite{soshnikovsurvey}) about the determinantal point process on the complex plane. Let the vector  $(z_1,z_2,\dots,z_n)$ be a random vector in $\text{$\mathbb{C}$}^n$ having density proportional to $\prod\limits_{i<j}|z_i-z_j|^2$ w.r.t a measure $\mu^{\otimes n}$. Notice that by doing column operations on the matrix $$\left[ \begin{smallmatrix}
\phi_0(z_1) & \phi_1(z_1) & \dots & \phi_{n-1}(z_1) \\
\phi_0(z_2) & \phi_1(z_2) & \dots & \phi_{n-1}(z_2) \\
\vdots & \vdots & \ddots & \vdots
\\
\phi_0(z_n) & \phi_1(z_n) & \dots & \phi_{n-1}(z_n) \end{smallmatrix}\right], $$ where $\phi_i$s are orthonormal polynomial w.r.t measure $\mu$, we get

\[
\det\left[ \begin{smallmatrix}
\phi_0(z_1) & \phi_1(z_1) & \dots & \phi_{n-1}(z_1) \\
\phi_0(z_2) & \phi_1(z_2) & \dots & \phi_{n-1}(z_2) \\
\vdots & \vdots & \ddots & \vdots
\\
\phi_0(z_n) & \phi_1(z_n) & \dots & \phi_{n-1}(z_n)\end{smallmatrix} \right] \left[ \begin{smallmatrix}
\phi_0(z_1) & \phi_1(z_1) & \dots & \phi_{n-1}(z_1) \\
\phi_0(z_2) & \phi_1(z_2) & \dots & \phi_{n-1}(z_2) \\
\vdots & \vdots & \ddots & \vdots
\\
\phi_0(z_n) & \phi_1(z_n) & \dots & \phi_{n-1}(z_n)\end{smallmatrix} \right]^* = c_n\prod\limits_{i<j}|z_i-z_j|^2,
\]
for some constant $c_n$. On simplification the left hand side of the above equation can be written as $\det[((\textbf{$\mathbb{K}$}(z_i,z_j)))_{1\leq i,j \leq n}]$, where $\textbf{$\mathbb{K}$}_n(z,w)=\sum\limits_{i=0}^{n-1}\phi_i(z)\overline{\phi}_i(w)$. Therefore the entries of the vector $(z_1,z_2,\dots,z_n)$ form a determinantal point process with kernel given by \[\textbf{$\mathbb{K}$}_n(z,w)=\sum\limits_{i=0}^{n-1}\phi_i(z)\overline{\phi}_i(w),\]
where $\phi_0, \phi_1, \dots, \phi_n$ are the orthonormal polynomials w.r.t measure $\mu$ on complex plane.

Ginibre \cite{ginibre} introduced three ensembles of matrices  with i.i.d. real, complex and quaternion  Gaussian entries respectively without imposing a Hermitian condition. These matrices are called Ginibre matrices in the literature. Here we restrict our attention to matrices with i.i.d. complex Gaussian entries. In \cite{ginibre}, Ginibre derived the eigenvalue density for $n\times n$ matrix with  i.i.d. standard complex Gaussian entries. 
\begin{theorem}[Ginibre ~\cite{ginibre}]
	Let $A$ be an $n\times n$ matrix with i.i.d standard complex Gaussian entries. Then the eigenvalues of $A$ form a determinantal point process on the complex plane with kernel
	\begin{equation*}
		\text{$\mathbb{K}$}_n(z,w) = \sum_{k=0}^{n-1}\frac{(z\bar w)^k}{k!}
	\end{equation*}
	w.r.t to background measure $\frac{1}{\pi}e^{-|z|^2}dm(z)$. Equivalently, the vector of eigenvalues has density
\[
	\frac{1}{\pi^n\prod_{k=1}^{n}k!}e^{-\sum_{k=1}^{n}|z_k|^2}\prod_{i<j}|z_i - z_j|^2 
\]
	w.r.t Lebesgue measure on $\text{$\mathbb{C}$}^n$.	
\end{theorem}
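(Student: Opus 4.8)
The plan is to pass from $A$ to its eigenvalues through the Schur decomposition, compute the Jacobian of that change of variables, integrate out the auxiliary parameters, and then read off the conclusion from the reproducing-kernel fact recalled at the start of this section. Write the density of $A$ with respect to Lebesgue measure on $\C^{n^2}$ as $\pi^{-n^2}e^{-\Tr(AA^*)}$. Generically $A = U(Z+T)U^*$ with $U\in U(n)$, $Z = \mathrm{diag}(z_1,\dots,z_n)$ the eigenvalue matrix, and $T$ strictly upper triangular; this decomposition is unique only up to permuting the $z_i$ and multiplying $U$ on the right by a diagonal unitary, ambiguities that merely contribute an overall constant once $U$ is restricted to a fundamental domain. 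Two observations reduce the problem to a Jacobian computation: unitary invariance of the trace gives $\Tr(AA^*) = \Tr\big((Z+T)(Z+T)^*\big) = \sum_{i=1}^n\abs{z_i}^2 + \sum_{i<j}\abs{t_{ij}}^2$, so in the new coordinates the weight splits into a Gaussian in the entries of $T$, a function of $Z$ alone, and no dependence on $U$; and conjugation by the fixed unitary $U$ preserves Lebesgue measure on matrix space.

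The heart of the proof is the Jacobian of $(U,Z,T)\mapsto A$. Setting $dH := U^*\,dU$, which is skew-Hermitian, one computes $U^*(dA)U = dZ + dT + [dH,\,Z+T]$, so by the invariance just noted it suffices to find the Jacobian of $(dH,dZ,dT)\mapsto U^*(dA)U$. Here the diagonal entries of the right-hand side equal $dz_i$ plus terms in the off-diagonal part of $dH$; the strictly-upper entries equal $dt_{ij}$ plus terms in $dH$; and the strictly-lower entries involve only $dH$, with $[dH,Z]_{ij} = (z_j - z_i)\,dH_{ij}$ for $i>j$, while $[dH,T]_{ij}$ involves only entries $dH_{k\ell}$ lying strictly further from the main diagonal than $(i,j)$. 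Ordering the subdiagonal positions by their distance from the diagonal therefore makes the linear map from the strictly-lower part of $dH$ to the strictly-lower part of $U^*(dA)U$ triangular with diagonal entries $z_j - z_i$ $(i>j)$, so its real Jacobian is $\prod_{i<j}\abs{z_i - z_j}^2$; the full change of variables is then block triangular ($dH$ recovered from the strictly-lower block, then $dT$ from the strictly-upper block, then $dZ$ from the diagonal), with the diagonal of $dH$ — the torus direction — dropping out into a constant volume factor. I expect establishing this triangular structure carefully to be the main obstacle.

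Putting the pieces together, the pushforward of $\pi^{-n^2}e^{-\Tr(AA^*)}\,dA$ is, up to a constant, $\big(\text{a Gaussian density in }T\big)\cdot e^{-\sum_k\abs{z_k}^2}\prod_{i<j}\abs{z_i-z_j}^2$, and integrating out $T$ and $U$ leaves the eigenvalue density proportional to $e^{-\sum_{k=1}^n\abs{z_k}^2}\prod_{i<j}\abs{z_i-z_j}^2$ on $\C^n$. The constant is pinned down by the Andréief (Gram) determinant identity together with the orthonormality, with respect to $\mu := \tfrac1\pi e^{-\abs z^2}\,dm(z)$, of the monomials $\phi_k(z) = z^k/\sqrt{k!}$ (note $\int\abs z^{2k}\,d\mu = k!$): with $M = [\phi_{j-1}(z_i)]_{1\le i,j\le n}$ one has $\abs{\det M}^2 = \prod_{i<j}\abs{z_i-z_j}^2\big/\prod_{k=0}^{n-1}k!$ and $\int_{\C^n}\abs{\det M}^2\prod_i d\mu(z_i) = n!$, whence $\int_{\C^n}\prod_{i<j}\abs{z_i-z_j}^2\prod_k\tfrac1\pi e^{-\abs{z_k}^2}\,dm = \prod_{k=1}^n k!$ and the density is exactly $\tfrac{1}{\pi^n\prod_{k=1}^n k!}\,e^{-\sum_k\abs{z_k}^2}\prod_{i<j}\abs{z_i-z_j}^2$. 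Since $\phi_0,\phi_1,\dots$ are the orthonormal polynomials for $\mu$, the fact recalled at the start of this section then identifies the eigenvalue process as determinantal with kernel $\mathbb{K}_n(z,w) = \sum_{k=0}^{n-1}\phi_k(z)\overline{\phi_k(w)} = \sum_{k=0}^{n-1}(z\bar w)^k/k!$ relative to $\mu$.
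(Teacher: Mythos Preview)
Your proposal is correct and follows exactly the Schur-decomposition-plus-Jacobian route that the paper itself adopts (and cites to \cite{manjubook}) for its more general product theorem, of which Ginibre's result is the $k=1$ case; the paper states Ginibre's theorem as background without giving a separate proof. Your wedge-product/triangular-structure argument for the Jacobian, the splitting of $\Tr(AA^*)$, and the identification of the determinantal kernel via the orthonormal monomials $z^k/\sqrt{k!}$ all match the paper's treatment.
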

These are the first non-hermitian matrix ensembles for which the exact eigenvalue density is computed. 


 Later Krishnapur \cite{manjunath} showed that the eigenvalues of $A^{-1}B$ form a determinantal point process on the complex plane when $A$ and $B$ are independent random matrices with i.i.d. standard complex Gaussian entries. In random matrix literature this matrix ensemble $A^{-1}B$ is known as \textit{spherical ensemble}.
\begin{theorem}[M.Krishnapur ~\cite{manjunath}]
	Let $A$ and $B$ be i.i.d. $n\times n$ matrix with i.i.d. standard complex Gaussian entries. Then the eigenvalues of $A^{-1}B$ form a determinantal point process on the complex plane with kernel 
	
	\begin{equation*}
	\text{$\mathbb{K}$}_n(z,w) = (1+z\overline{w})^{n-1}
	\end{equation*}
	w.r.t to background measure $\frac{n}{\pi}\frac{dm(z)}{(1+|z|^2)^{n+1}}$. Equivalently, the vector of eigenvalues has density
	$$
	\frac{1}{n}\left(\dfrac{n}{\pi}\right)^n\prod_{k=1}^{n}{{n-1}\choose k}\prod_{k=1}^{n}\dfrac{1}{(1+|z_k|^2)^{n+1}}\prod_{i<j}|z_i - z_j|^2
	$$
	w.r.t Lebesgue measure on $\mathbb{C}\text{$^n$}$
\end{theorem}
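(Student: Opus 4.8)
The plan is to first determine the density of the matrix $M:=A^{-1}B$ on $\C^{n\times n}$, then pass to the eigenvalues via the Schur decomposition, and finally recognize the joint eigenvalue density as one of Vandermonde-squared type, so that the determinantal-structure fact recalled at the start of the chapter applies. For the first step: writing $B=AM$ and using that for fixed invertible $A$ the complex-linear map $M\mapsto AM$ on $\C^{n\times n}$ has real Jacobian $|\det A|^{2n}$, the joint density of $(A,M)$ is proportional to $e^{-\Tr(A^*A(I+MM^*))}\,|\det A|^{2n}$; substituting $A=\tilde A\,(I+MM^*)^{-1/2}$ turns the exponent into $\Tr(\tilde A^*\tilde A)$, and integrating out $\tilde A$ (a convergent Gaussian integral of $|\det\tilde A|^{2n}$, a finite constant) leaves the density of $M$ proportional to $\det(I+MM^*)^{-2n}$.

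Next, for Lebesgue-a.e.\ $M$ write the Schur decomposition $M=UTU^*$, with $U$ unitary (determined up to the diagonal torus) and $T$ upper triangular with diagonal the eigenvalues $\lambda_1,\dots,\lambda_n$. The classical Jacobian of this change of variables (as for the Ginibre ensemble) contributes the factor $\prod_{i<j}|\lambda_i-\lambda_j|^2$, with the strictly-upper entries $t_{ij}$ and the unitary part decoupling (the latter integrating to a constant over the flag manifold), and $\det(I+MM^*)=\det(I+TT^*)$. Hence the eigenvalue density is proportional to
\[
\prod_{i<j}|\lambda_i-\lambda_j|^2 \ \int_{\C^{n(n-1)/2}} \det(I+TT^*)^{-2n}\prod_{i<j}dm(t_{ij}).
\]

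The main obstacle is to evaluate this integral, namely to show $\int_{\C^{n(n-1)/2}}\det(I+TT^*)^{-2n}\prod_{i<j}dm(t_{ij})=c_n\prod_{k=1}^{n}(1+|\lambda_k|^2)^{-(n+1)}$. I would prove, more generally by induction on $n$, that $\int\det(I+TT^*)^{-\alpha}\prod dt_{ij}=c_{n,\alpha}\prod_{k}(1+|\lambda_k|^2)^{-(\alpha-n+1)}$ for $\alpha>n$. Writing $I+TT^*=P+c_nc_n^*$ with $c_n$ the last column of $T$ and $P$ depending only on the first $n-1$ columns, the matrix-determinant lemma gives $\det(I+TT^*)=\det(P)\,(1+c_n^*P^{-1}c_n)$; integrating out the free entries $t_{1n},\dots,t_{n-1,n}$ of $c_n$ by completing the square, and using the block-inverse and Schur-complement identities together with the fact that the first $n-1$ columns of $T$ vanish in coordinate $n$ (so $(P)_{nn}=1$, whence $P$ is block-diagonal with blocks $I_{n-1}+T'(T')^{*}$ and $1$), this integral collapses to a constant times $(1+|\lambda_n|^2)^{-(\alpha-n+1)}\det(I_{n-1}+T'(T')^{*})^{-(\alpha-1)}$, reducing $(n,\alpha)$ to $(n-1,\alpha-1)$ with $\alpha-n+1$ invariant. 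Taking $\alpha=2n$ gives the claim. (One can also run the whole argument through the generalized (QZ) Schur decomposition of the pencil $(A,B)$ directly, evaluating a single Jacobian; the arithmetic is equivalent.)

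Combining the two displays, $(\lambda_1,\dots,\lambda_n)$ has density proportional to $\prod_{i<j}|\lambda_i-\lambda_j|^2$ with respect to $\mu^{\otimes n}$, where $d\mu(z)=\frac n\pi(1+|z|^2)^{-(n+1)}dm(z)$ (the constant normalizing $\mu$ and absorbing $c_n$). By the determinantal-structure fact recalled at the start of the chapter, the eigenvalues form a determinantal point process with kernel $\mathbb{K}_n(z,w)=\sum_{k=0}^{n-1}\phi_k(z)\overline{\phi_k(w)}$, where $\phi_0,\dots,\phi_{n-1}$ are the orthonormal polynomials for $\mu$. By rotational invariance of $\mu$ these are the normalized monomials, and the Beta integral $\int_\C|z|^{2k}(1+|z|^2)^{-(n+1)}dm(z)=\pi\,k!\,(n-k-1)!/n!$ gives $\|z^k\|_{L^2(\mu)}^2=\binom{n-1}{k}^{-1}$, so $\phi_k(z)=\binom{n-1}{k}^{1/2}z^k$ and
\[
\mathbb{K}_n(z,w)=\sum_{k=0}^{n-1}\binom{n-1}{k}(z\bar w)^k=(1+z\bar w)^{n-1},
\]
which is precisely the asserted kernel relative to the background measure $\frac n\pi(1+|z|^2)^{-(n+1)}dm(z)$. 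Unwinding the normalizing constant then yields the explicit eigenvalue density stated.
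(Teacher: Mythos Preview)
Your argument is correct, but it proceeds by a genuinely different route from the one the paper uses. The paper does not attack $A^{-1}B$ directly: it proves the more general statement for $A_1^{\epsilon_1}\cdots A_k^{\epsilon_k}$ by applying the \emph{generalized} Schur decomposition to the tuple $(X_1,\dots,X_k)=(A_1^{\epsilon_1},\dots,A_k^{\epsilon_k})$, computing a single Jacobian in those variables, and integrating out the unitary and strictly upper--triangular parts; the weight $\omega(z)$ then emerges as a surface integral over $\{x_1^{\epsilon_1}\cdots x_k^{\epsilon_k}=z\}$, which for $k=2$, $\epsilon_1=-1$, $\epsilon_2=1$ evaluates to $C_n(1+|z|^2)^{-(n+1)}$. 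Your approach instead first eliminates $A$ to obtain the matrix density $\det(I+MM^*)^{-2n}$ of $M=A^{-1}B$, applies the \emph{ordinary} Schur decomposition to $M$, and then handles the integral over the strictly upper--triangular entries by a clean inductive rank--one update (matrix--determinant lemma plus the block structure of $P$), reducing $(n,\alpha)$ to $(n-1,\alpha-1)$. Your method is more elementary and self--contained for the spherical ensemble itself, and makes the appearance of $(1+|\lambda_k|^2)^{-(n+1)}$ very transparent; the paper's method is what one needs if the goal is the full product theorem, since your first step (forming $M$ and integrating out $A$) has no clean analogue for general $k$. Your parenthetical remark about the QZ decomposition is essentially the paper's route specialized to $k=2$.
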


 Akemann and Burda \cite{akemann} have derived the eigenvalue density for the product of $k$ independent $n\times n$ matrices with i.i.d. complex Gaussian entries.
 In this case the joint probability distribution of the eigenvalues of the product matrix is found to be given by a determinantal point process as in the case of Ginibre,
 but with a weight given by a Meijer $G$-function depending on $k$. Their derivation hinges on the generalized Schur decomposition for matrices and the method of orthogonal polynomials. We shall state that result as the following theorem.
\begin{theorem}[Akemann-Burda ~\cite{akemann}]
	Let $A_1,A_2,\dots,A_n$ be i.i.d. $n\times n$ matrices with i.i.d. standard complex Gaussian entries. Then the eigenvalues of $A_1A_2\dots A_n$ has density
	(with respect to Lebesgue measure on $\text{$\mathbb{C}$}^{n}$) proportional to
	\[
	\prod_{\ell=1}^{n}\omega(z_{\ell})\prod_{i<j}^{n}|z_i-z_j|^2
	\]
	with a weight function $ \omega(z)$, where
	\begin{equation}\label{weight0}
	\omega(z)=\int_{x_1\cdots x_{k}
	=z}e^{-\sum_{j=1}^{k}|x_{j}|^2}
	\prod_{j=1}^{k}|x_{j}|^{(n-1)}d\sigma.
	\end{equation}
	Here $\sigma$ is the Lebesgue measure restricted to the hyper surface $\{(x_1,x_2,\dots,x_k):x_1\cdots x_{k}
		=z\}$.
\end{theorem}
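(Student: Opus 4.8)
The plan is to diagonalise the tuple $(A_1,\dots,A_k)$ simultaneously by the generalized Schur decomposition, push the product of Gaussian densities through the induced change of variables, and then integrate out every variable except the eigenvalues of the product. \textbf{Step 1 (simultaneous triangularization).} For a generic $k$-tuple of $n\times n$ complex matrices there exist unitary matrices $Q_0,Q_1,\dots,Q_k$ with $Q_k=Q_0$ and upper triangular matrices $T^{(1)},\dots,T^{(k)}$ such that $A_i=Q_{i-1}T^{(i)}Q_i^{*}$ for $i=1,\dots,k$. The product then telescopes: $A_1A_2\cdots A_k=Q_0\,(T^{(1)}T^{(2)}\cdots T^{(k)})\,Q_0^{*}$, and since the diagonal of a product of upper triangular matrices is the entrywise product of the diagonals, the eigenvalues of $A_1\cdots A_k$ are exactly $z_\ell=\prod_{i=1}^{k}t^{(i)}_\ell$ for $\ell=1,\dots,n$, where $t^{(i)}_\ell=(T^{(i)})_{\ell\ell}$. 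One fixes the residual gauge freedom (the diagonal phases of the $Q_i$ and the ordering of the $z_\ell$) in the standard way, which only affects overall constants.

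\textbf{Step 2 (transporting the Gaussian weight and the Jacobian).} Next I would change variables $(A_1,\dots,A_k)\mapsto(Q_1,\dots,Q_k;T^{(1)},\dots,T^{(k)})$. Since $\Tr A_i^{*}A_i=\Tr (T^{(i)})^{*}T^{(i)}$, the joint Gaussian density $\prod_i e^{-\Tr A_i^{*}A_i}$ becomes $\prod_i e^{-\sum_{p\le q}|(T^{(i)})_{pq}|^{2}}$, which splits into diagonal and strictly-upper-triangular contributions. The Jacobian of the transformation (the content of the Jacobian section) should be a constant times $\prod_{i<j}|z_i-z_j|^{2}$ times a product of powers of the moduli $|t^{(i)}_\ell|$ of the diagonal entries, these powers recording how the strictly-upper-triangular parts of the various $T^{(i)}$ are conjugated by the $Q_i$. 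Integrating out the Haar measures $dQ_1\cdots dQ_k$ (of finite total mass) and performing the Gaussian integrals over the strictly-upper-triangular entries then leaves a density in the diagonal variables of the form $\prod_{i<j}|z_i-z_j|^{2}\prod_{\ell=1}^{n}\prod_{i=1}^{k}\left(|t^{(i)}_\ell|^{a_i}e^{-|t^{(i)}_\ell|^{2}}\right)$ for suitable exponents $a_i$.

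\textbf{Step 3 (fiberwise integration and conclusion).} For each $\ell$ I would pass from $(t^{(1)}_\ell,\dots,t^{(k)}_\ell)$ to the pair consisting of the eigenvalue $z_\ell=\prod_i t^{(i)}_\ell$ together with fiber coordinates on the surface $\{x_1\cdots x_k=z_\ell\}$, and integrate out the fiber; absorbing the Jacobian of this last substitution into $\prod_i|t^{(i)}_\ell|^{a_i}e^{-|t^{(i)}_\ell|^{2}}$ should produce precisely the weight $\omega(z_\ell)=\int_{x_1\cdots x_k=z_\ell}e^{-\sum_i|x_i|^{2}}\prod_i|x_i|^{n-1}\,d\sigma$. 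This yields eigenvalue density proportional to $\prod_{\ell=1}^{n}\omega(z_\ell)\prod_{i<j}|z_i-z_j|^{2}$ with respect to Lebesgue measure on $\mathbb{C}^{n}$, i.e.\ proportional to $\prod_{i<j}|z_i-z_j|^{2}$ with respect to $\mu^{\otimes n}$ where $d\mu=\omega\,dm$; by the standard fact recalled at the start of this chapter the eigenvalues then form a determinantal point process with kernel $\mathbb{K}_n(z,w)=\sum_{m=0}^{n-1}\phi_m(z)\overline{\phi_m(w)}$, $\phi_m$ being the orthonormal polynomials for $\omega$.

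\textbf{Main obstacle.} The delicate point is the Jacobian of the generalized Schur decomposition in Step 2: one must organise the tangent-space computation so that the $Q_i$-directions, the strictly-upper-triangular $T^{(i)}$-directions, and the diagonal directions decouple, identify the exponents $a_i$, and then check that, combined with the Jacobian of the fiberwise substitution in Step 3, they conspire to give the symmetric weight $\prod_i|x_i|^{n-1}$ appearing in $\omega$. Care is also needed to fix the gauge consistently across all $k$ factors and to confirm that the Haar, Gaussian, and fiber integrals converge; specializing to small $k$ and comparing with the previously known formulas provides a useful consistency check.
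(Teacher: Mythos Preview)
Your proposal is correct and follows essentially the same route as the paper: the paper proves the more general Theorem~\ref{chap5:thm1} (of which the Akemann--Burda statement is the special case $\epsilon_i=1$) by exactly your three steps---generalized Schur decomposition $A_\ell=U_\ell(Z_\ell+T_\ell)U_{\ell+1}^{*}$, a wedge-product Jacobian computation producing the factor $\prod_{i<j}|z_i-z_j|^{2}$ together with Haar measures and the flat measure on the $T_\ell$ and $Z_\ell$, and finally integration over the $U_\ell$, the strictly upper-triangular parts, and the hypersurfaces $\{\prod_\ell Z_\ell(i)=z_i\}$. The ``main obstacle'' you flag is precisely the content of the paper's Jacobian section, where the decoupling you anticipate is achieved by a careful lexicographic ordering of the one-forms.
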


In all the above results after calculating the density of the eigenvalues, it turns out that they form a determinantal point process.

Now following the work of Krishnapur \cite{manjunath} on spherical ensembles and the work of Akemann and Burda \cite{akemann} on the product of $k$ independent  $n\times n$ Ginibre matrices, it is a natural question to ask, what can be said about the eigenvalues of  product of $k$ independent Ginibre matrices when a few of them are inverted? We investigated the case  $A=A_1^{\epsilon_1}A_2^{\epsilon_2}
\cdots A_k^{\epsilon_k}$, where each $\epsilon_i$
is $+1$ or $-1$ and $A_1,
A_2,\ldots, A_k$ are independent matrices with i.i.d. standard complex Gaussian entries, and obtained the density of eigenvalues of $A$. We state the result as the following theorem.

\begin{theorem}[K Adhikari, K Saha, NK Reddy, TR Reddy ~\cite{atr}]\label{chap5:thm1}
Let $A_1,A_2,\ldots,A_k$ be independent $n\times n$ random matrices with i.i.d. standard complex Gaussian entries.
Then the eigenvalues of  $A=A_1^{\epsilon_1}A_2^{\epsilon_2}
\ldots A_k^{\epsilon_k}$, where each $\epsilon_i$
is  $+1$ or $-1$, has density
(with respect to Lebesgue measure on $\mathbb{C}\textbf{$^n$}$) proportional to
\[
\prod_{\ell=1}^{n}\omega(z_{\ell})\prod_{i<j}^{n}|z_i-z_j|^2
\]
with a weight function $ \omega(z)$, where
\begin{equation}\label{weight1}
\omega(z)=\int_{x_1^{\epsilon_1}\cdots x_{k}^{\epsilon_k}
=z}e^{-\sum_{j=1}^{k}|x_{j}|^2}
\prod_{j=1}^{k}|x_{j}|^{(1-\epsilon_j)(n-1)}d\sigma.
\end{equation}

\no Here $\sigma$ is the Lebesgue measure restricted to the hyper surface given by  $\{(x_1,x_2,\dots,x_k):x_1^{\epsilon_1}\cdots x_{k}^{\epsilon_k}
	=z\}$.
\end{theorem}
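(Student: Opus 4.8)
The plan is to follow the route used by Krishnapur~\cite{manjunath} for the spherical ensemble and by Akemann and Burda~\cite{akemann} for products of Ginibre matrices: bring $A_1,\dots,A_k$ simultaneously to triangular form by a generalized (periodic) Schur decomposition adapted to the sign pattern $\epsilon_1,\dots,\epsilon_k$, push the joint Gaussian law through this transformation, compute its Jacobian, and integrate out every coordinate except the eigenvalues.

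First I would set up the decomposition. Since $A=A_1^{\epsilon_1}\cdots A_k^{\epsilon_k}$ is almost surely a product of invertible matrices and has almost surely simple spectrum, the usual deflation argument yields unitary matrices $U_0,\dots,U_{k-1}$, with $U_k:=U_0$, such that for every $j$ the matrix
\[
T_j:=\begin{cases} U_{j-1}^{*}A_jU_j,& \epsilon_j=+1,\\[2pt] U_j^{*}A_jU_{j-1},& \epsilon_j=-1,\end{cases}
\]
is upper triangular; in both cases $A_j^{\epsilon_j}=U_{j-1}\,T_j^{\epsilon_j}\,U_j^{*}$ with $T_j^{\epsilon_j}$ upper triangular, so $U_0^{*}AU_0=T_1^{\epsilon_1}\cdots T_k^{\epsilon_k}$ is upper triangular and the eigenvalues of $A$ are $z_\ell=\prod_{j=1}^{k}\big(T_j\big)_{\ell\ell}^{\epsilon_j}$. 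Off a Lebesgue-null set this decomposition is unique up to permuting the $n$ slots and up to the residual torus action $U_j\mapsto U_jD_j$ by diagonal unitaries, exactly as for a single matrix; fixing these ambiguities makes $(A_1,\dots,A_k)\mapsto(U_0,\dots,U_{k-1},T_1,\dots,T_k)$ a bona fide change of variables, with the $U_j$ ranging over a product of flag manifolds and each $T_j$ over upper triangular matrices.

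The main step — and the one I expect to be the real obstacle — is the Jacobian of this map. Writing $T_j=D_j+N_j$ with $D_j=\mathrm{diag}(t^{(j)}_1,\dots,t^{(j)}_n)$ and $N_j$ strictly upper triangular, and using that the Frobenius norm is invariant under $A_j\mapsto U_{j-1}A_jU_j^{*}$, I would show the Jacobian factorizes into a piece depending only on the unitaries, a Gaussian piece in the $N_j$, and a ``diagonal'' piece equal to a constant times $\prod_{i<j}|z_i-z_j|^{2}$ times a product of powers of the moduli $|t^{(j)}_\ell|$, the extra powers being exactly those needed to turn the Gaussian weight into $\omega$. Organising the differential of $A_j\mapsto U_{j-1}T_jU_j^{*}$ (resp.\ of $A_j\mapsto U_jT_jU_{j-1}^{*}$ in the inverted slots) so that the off-diagonal cross terms assemble into the Vandermonde in the $z_\ell$, and keeping honest track of the $|\det A_j|$-powers produced by each inversion, is the heart of the computation; this is where the argument reduces to the Jacobian identity underlying the Ginibre and Akemann--Burda derivations, now bookkept over $k$ coupled factors subject to the periodicity $U_k=U_0$. (This is the content of the section on computing the Jacobian announced in the introduction of this chapter.)

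With the Jacobian in hand, I would integrate out the strictly upper triangular parts $N_j$ (Gaussian integrals, contributing a finite constant) and the unitary variables (contributing the finite volume of the flag manifolds), which leaves a density on $\big(t^{(j)}_\ell\big)_{1\le j\le k,\,1\le\ell\le n}$ proportional to
\[
\prod_{i<j}|z_i-z_j|^{2}\ \cdot\ \prod_{\ell=1}^{n}\ \prod_{j=1}^{k}e^{-|t^{(j)}_\ell|^{2}}\,\big|t^{(j)}_\ell\big|^{(1-\epsilon_j)(n-1)},\qquad z_\ell=\prod_{j=1}^{k}\big(t^{(j)}_\ell\big)^{\epsilon_j}.
\]
Finally, for each slot $\ell$ I would change variables from $(t^{(1)}_\ell,\dots,t^{(k)}_\ell)\in\mathbb{C}^{k}$ to the value $z_\ell$ together with coordinates along the fibre $\{x_1^{\epsilon_1}\cdots x_k^{\epsilon_k}=z_\ell\}$; integrating out the fibre collapses the per-slot factor to precisely the weight $\omega(z_\ell)$ of \eqref{weight1} (checking that the resulting fibre measure is the one used in that definition is a last bookkeeping point), and the product over $\ell$ with the Vandermonde gives the asserted density. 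Specialising $k=2$, $(\epsilon_1,\epsilon_2)=(-1,+1)$ recovers Krishnapur's spherical ensemble, and $\epsilon_1=\dots=\epsilon_k=+1$ recovers the Akemann--Burda weight \eqref{weight0}; and since any density of the form $\prod_\ell\omega(z_\ell)\prod_{i<j}|z_i-z_j|^2$ is that of a determinantal point process, with kernel built from the orthonormal polynomials for $\omega$ as recalled at the start of the chapter, the determinantal structure follows at once.
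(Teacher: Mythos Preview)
Your outline is correct and leads to the stated density, but it is organised differently from the paper's proof, and it is worth seeing the contrast. The paper does \emph{not} adapt the Schur decomposition to the sign pattern; instead it first changes variables $X_\ell=A_\ell^{\epsilon_\ell}$ at the matrix level (picking up $|\det X_\ell|^{2(\epsilon_\ell-1)n}$), then applies the \emph{uniform} generalized Schur decomposition $X_\ell=U_\ell S_\ell U_{\ell+1}^*$ to all $X_\ell$. The Jacobian of that map produces only the bare Vandermonde $\prod_{i<j}|z_i-z_j|^2$, with no extra diagonal powers. The price is that for inverted slots the Gaussian weight becomes $e^{-\Tr(S_\ell^{-1}S_\ell^{-1*})}$, and the paper handles this by a further substitution $T_\ell\mapsto Q_\ell=(I+Z_\ell^{-1}T_\ell)^{-1}$ in the strictly upper triangular part; the powers $|Z_\ell(i)|^{(\epsilon_\ell-1)(n+1)}$ then emerge from the combination of the matrix-inversion Jacobian, the $T\to Q$ Jacobian, and the variances of the resulting Gaussian integrals. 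In your route, by contrast, you keep the $A_j$ themselves and flip the orientation of the triangularisation according to $\epsilon_j$; the Gaussian weight is then immediately $e^{-\|T_j\|_F^2}$, and the needed powers $|t_\ell^{(j)}|^{(1-\epsilon_j)(n-1)}$ appear directly in the Jacobian of the decomposition, because the cyclic $k\times k$ determinant linking the $\omega_m(i,j)$ to the $\lambda_\ell(i,j)$ equals $(z_i-z_j)$ times $\prod_{j:\epsilon_j=-1}t^{(j)}_i t^{(j)}_j$. This is arguably tidier (no $Q$-trick), but note that your parenthetical about ``$|\det A_j|$-powers produced by each inversion'' does not belong in your scheme: you never invert $A_j$, and all the diagonal powers come from the Jacobian of the sign-adapted Schur map itself. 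That Jacobian identity is exactly the step you flag as the real obstacle, and it is; the paper's computation (for the uniform decomposition) would need to be redone with your mixed orientations to make your argument complete.
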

\begin{remark}
From the symmetry of the expressions in the Theorem \ref{chap5:thm1} notice that the density of the eigenvalues of the matrix $A=A_1^{\epsilon_1}A_2^{\epsilon_2}
\ldots A_k^{\epsilon_k}$ depends only on  $\sum_{i=1}^{k}\epsilon_i$ but not on individual $\epsilon_i$s.
\end{remark}
\begin{remark}
  If $k=2$, $\epsilon_1=-1$ and $\epsilon_2=1$, then  from \eqref{weight1} we get that
\[\omega(z)=\int_{\frac{x_{2}} {x_{1}}=z}e^{- (|x_{1}|^{2}+|x_{2}|^2)}
|x_{1 }|^{2(n-1)}d\sigma=C_n \frac{1}{(1+|z|^2)^{(n+1)}},
\]
where $\sigma$ is the Lebesgue measure restricted to the hyper surface $\{(x_1,x_2):\frac{x_{2}} {x_{1}}=z\}$  $C_n$ is a constant . Hence the density of the eigenvalues of $A_1^{-1}A_2$
is proportional to

\[
\prod_{i=1}^{n}\frac{1}{(1+|z_i|^2)^{n+1}}\prod_{i<j}|z_{i}-z_j|^2.
\]
From the above expression it is clear that the eigenvalues of $A_1^{-1}A_2$ form a determinantal point process in a complex plane. This result was  proved by Krishnapur in \cite{manjunath} through a different approach.
\end{remark}

\begin{remark}
  If $\epsilon_i=1$ for $i=1,2,\ldots,k$, then by Theorem \ref{chap5:thm1} it follows that the eigenvalues of
$A_1A_2\ldots A_k$  form a determinantal point process. This result is due to Akemann and Burda \cite{akemann}.

\end{remark}
 For proving the Theorem \ref{chap5:thm1}, we will do an appropriate transformation and then integrate the auxiliary variables to obtain the eigenvalue density. We will use generalized Schur decomposition for the matrices $X_1,X_2,\dots,X_k$, where $X_i=A_i^{\epsilon_i}$ for $i=1,2,\dots,k$. In the forthcoming section we will present generalized Schur decomposition appeared in \cite{akemann}. In the subsequent section we compute the Jacobian for this transformation.

\section{Generalized Schur decomposition}
We will first recall the Schur decomposition and then state generalized Schur decomposition.
\paragraph{Schur decomposition:}Let $A$ be a $n\times n$ matrix. Then there exists an unitary matrix $U$, a diagonal matrix $D$ and a strictly upper triangular matrix $T$, such that $A=U(D+T)U^*$. The diagonal elements of $D$ are  the eigenvalues of $A$. Further, if the eigenvalues of $A$ are all distinct and we fix the order of their appearance in $D$, then the decomposition is unique up to a conjugation by a diagonal matrix, whose diagonal entries are in $S^1$.

The key ingredient in deriving the result \ref{chap5:thm1} is the generalization of the above mentioned Schur product. 

\paragraph{Generalized Schur decomposition:}Let $A_1,A_2,\dots,A_k$ be $n \times n$ matrices, then there exists unitary matrices $U_1,U_2,\dots,U_k$, diagonal matrices $D_1,D_2,\dots,D_k$ and strictly upper triangular matrices $T_1,T_2,\dots,T_k$ such that they can be decomposed in the following form.
\begin{align}
A_1 & = U_1(Z_1+T_1)U_2^*,\\
 A_2 & = U_2(Z_2+T_2)U_3^*,\\
 &\vdots\\
 A_k &=U_k(Z_k+T_k)U_1^*.
\end{align}

To prove this decomposition we first consider the Schur decomposition for the matrix $A_1A_2\dots A_k$. Let it be $A_1A_2\dots A_k=U_1(Z+T)U_1^*$. Now performing the Gram-Schimdt orthogonalization to the columns of the matrix $U_1^*A_1$, we get $U_1^*A_1=(Z_1+T_1)U_2^*$ for some unitary matrix $U_2$, and $Z_1, T_1$ being diagonal and strictly upper triangular matrices respectively. We repeat this process i.e., in the $i^{th}$ step we perform the Gram-Schimdt orthogonalization to the matrix $U_i^*A_i$ to get $U_i^*A_i=(Z_i+T_i)U_(i+1)$. After performing $n-1$ steps it forces that $U_n^*A_n=(Z_n+T_n)U_1^*.$  

This decomposition is not unique in general. But, if we incorporate certain conditions on the matrices then it will be unique. Assume that the diagonal entries of $Z_1Z_2\dots Z_k$ are distinct, and appear in a particular order (in particular may choose lexicographical ordering). Observe that replacing $U_i$ with $\Theta_iU_i$, where $\Theta_i$ is a diagonal unitary matrix, we have that $A_i$s assume a similar decomposition.
Hence if all the diagonal entries of $U_i$s are non-zero we may assume them to be positive.  On assuming these two conditions the decomposition will be unique. Another criterion for the uniqueness of this decomposition is to assume that the first non-zero entry in each row of $U_i$ is non-negative. In the next section while computing Jacobian we assume that all the diagonal entries are positive as the unitary matrices with zero diagonal entries form a null set. 

Notice that the eigenvalues of $A_1A_2\dots A_k$ are same as that of $Z_1Z_2\dots Z_k$. We will exploit this and use generalized Schur decomposition as the transformation in recovering the eigenvalue density.
We will compute the Jacobian for this transformation in the next section. For a more general discussion on this the reader can refer to the appendix in \cite{atr}.

\section{Jacobian computation}

To obtain eigenvalue density of $A$, we need to do an appropriate change of variables. We do generalized Schur decomposition as mentioned in the previous section. We will compute the Jacobian for this transformation in this section. The computation of Jacobian is on the lines of the computation, given in \cite{manjubook} (Section 6.3, Chapter-6), while deriving the eigenvalue density for Ginibre matrices. 

Before doing the Jacobian determinant calculation, we state a basic property about wedge product, which will be used repeatedly.

If $dy_j=\sum_{k=1}^na_{j,k}dx_k$, for $1\le j\le n$, then using the alternating property $dx\wedge dy=-dy\wedge dx$ it is easy to see that
\begin{equation}\label{eqn:wedge:relation}
dy_1\wedge dy_2\wedge \ldots\wedge dy_n=\det[((a_{j,k}))_{j,k\le n}]dx_1\wedge x_2\wedge\ldots\wedge dx_n.
\end{equation}

As a consequence we can see that,  if $\underline{x}=(x_1,\dots,x_n)$ is a unitary transformation of $\underline{y}=(y_1,\dots,y_n)$, then \begin{equation}\label{eqn:wedge:relation2}
dy_1\wedge d\overline{y}_1\wedge dy_2\wedge d\overline{y}_2 \ldots\wedge dy_n\wedge d\overline{y}_n=dx_1\wedge d\overline{x}_1\wedge dx_2\wedge d\overline{x}_2 \ldots\wedge dx_n\wedge d\overline{x}_n.
\end{equation}

%
From generalized Schur decomposition we have that for any matrices $X_1,X_2,\dots,X_k$ in $g\ell (n,\mathbb{C})$ can be written as
\begin{align}\label{eqn:gschur}
X_1 & = U_1(Z_1+T_1)U_2^*,\\
 X_2 & = U_2(Z_2+T_2)U_3^*,\\
 &\vdots\\
 X_k &=U_k(Z_k+T_k)U_{k+1}^*.
\end{align}
Where $U_1,U_2,\dots,U_k,U_{k+1}$ are unitary matrices satisfying $U_{k+1}=U_1$, $Z_1,Z_2,\dots,Z_k$ are diagonal matrices and $T_1,T_2,\dots,T_k$ are strictly upper triangular matrices. Because $U_iU_i^*=\text{$\mathbb{I}$}_n$  we have $(dU_i)U_i^*=-U_i(dU_i^*)$, for $i=1,2,\dots,k$. Using this fact and the generalized Schur decomposition \ref{eqn:gschur}, for any $\ell=1,2,\dots,k$ we get,

\begin{align}
dX_\ell &= (dU_\ell)(Z_\ell+T_\ell)U_{\ell+1}^*+U_\ell (dZ_\ell+dT_{\ell})U_{\ell+1}^*+U_\ell (Z_\ell+T_{\ell})dU_{\ell+1}^*\\
&= (dU_\ell)(Z_\ell+T_\ell)U_{\ell+1}^*+U_\ell (dZ_{\ell}+dT_\ell)U_{\ell+1}^*-U_\ell (Z_\ell+T_\ell)U_{\ell+1}^*(dU_{\ell+1})U_{\ell+1}^*\\
&=
U_\ell\left[(U_\ell^*dU_\ell)(Z_\ell+T_\ell)-(Z_\ell+T_\ell)(U_{\ell+1}^*dU_{\ell+1})+dZ_\ell+dT_\ell\right]U_{\ell+1}^*.
\end{align}

For convenience let us denote $\Lambda_\ell:=U_\ell^*(dX_\ell)U_{\ell+1}$, $\Omega_\ell:=U_\ell^*dU_\ell$ and $S_\ell:=Z_\ell+T_\ell$.  Note that $\Lambda_\ell=(\lambda_\ell (i,j))$ and $\Omega_\ell=(\omega_\ell (i,j))$ are $n\times n$ matrices of one forms, and $dS_\ell$ ($=dZ_\ell+dT_\ell$) is an upper triangular matrix of one form. Let  $Z_\ell=\mbox{diag}(Z_\ell(1),\linebreak Z_\ell(2),\dots,Z_\ell(n))$, $T_\ell=(t_\ell(i,j))$ and $\Lambda_\ell=(\lambda_\ell(i,j))$.
Define,
\begin{equation}\label{eqn:lambda_l}
\Lambda_\ell=\Omega_\ell S_\ell-S_\ell\Omega_{\ell+1}+dS_\ell.
\end{equation}
For any unitary matrix $U$ the transformation $X \rightarrow UX$ is a unitary transformation. Therefore from \eqref{eqn:wedge:relation2}, we have
\[\bigwedge_{i,j}\left(dX_\ell(i,j)\wedge d\overline{X}_\ell(i,j)\right)= 
\bigwedge_{i,j}\left(d\lambda_\ell(i,j)\wedge d\overline{\lambda}_\ell(i,j)\right),\]
for $\ell=1,2,\dots,k.$
Throughout this computation we will ignore the constants, hence each equality is indeed an equality up to a constant. Because we will be dealing with probability densities the constants can be retrieved by equating the integral to $1$.
Expanding the equation \eqref{eqn:lambda_l} we get,
\begin{eqnarray}\label{eqn:wedge_nullify}
\lambda_\ell (i,j) &=&\sum\limits_{m=1}^{j}S_\ell (m,j)\omega_\ell (i,m)-\sum\limits_{m=i}^{n}S_\ell (i,m)\omega_{\ell+1}(m,j)+dS_\ell (i,j)
\\&=&\left\{
\begin{array}{lcr}
S_\ell (j,j)\omega_\ell (i,j)-S_\ell (i,i)\omega_{\ell+1}(i,j) \\ +\left[\sum\limits_{m=1}^{j-1}S_\ell (m,j)\omega_\ell (i,m)-\sum\limits_{m=i+1}^{n}S_\ell (i,m)\omega_{\ell+1}(m,j)\right] \mbox{  if } i>j;\\
dS_\ell (i,j)+S_\ell (i,j)\left(\omega_\ell (i,i)-\omega_{\ell+1}(j,j)\right)\\ +\left[\sum\limits_{\substack{m=1\\m\neq i}}^{j}S_\ell (m,j)\omega_\ell (i,m)-\sum\limits_{\substack{m=i+1\\m\neq j}}^{n}S_\ell (i,m)\omega_{\ell+1}(m,j)\right] 
\mbox{ if }i\leq j.\\
\end{array}
\right.
\end{eqnarray}

To execute the wedge product, we will now arrange $\{\lambda_\ell (i,j),\overline{\lambda}_\ell (i,j)\}$ in a particular order. We will use lexicographic order on the indices associated with $\lambda_\ell (i,j)$. The indices associated with $\lambda_\ell (i,j)$ are $(i,j,\ell)$. The lexicographical order will be taken on $(i,n-j,k-\ell)$. The corresponding conjugate terms will be followed by the term for which it is conjugate. For convenience, we present the ordering as the following table (each row is read from left to right and top row precedes bottom rows). 
\[
\begin{smallmatrix}
\lambda_1(n,1), \overline{\lambda}_1(n,1), \dots, \lambda_k(n,1), \overline{\lambda}_k(n,1),&\dots&, \lambda_1(n,n), \overline{\lambda}_1(n,n), \dots, \lambda_k(n,n), \overline{\lambda}_k(n,n)\\
\lambda_1(n-1,1), \overline{\lambda}_1(n-1,1), \dots, \lambda_k(n-1,1), \overline{\lambda}_k(n-1,1),&\dots&, \lambda_1(n-1,n), \overline{\lambda}_1(n-1,n), \dots, \lambda_k(n-1,n),\overline{\lambda}_k(n-1,n) \\
\vdots &\ddots&\vdots\\
\lambda_1(1,1), \overline{\lambda}_1(1,1), \dots, \lambda_k(1,1), \overline{\lambda}_k(1,1),&\dots&, \lambda_1(1,n), \overline{\lambda}_1(1,n), \dots, \lambda_k(1,n),\overline{\lambda}_k(1,n)
\end{smallmatrix}
\]
 Using the fact that $\Omega_\ell$ is skew-hermitian (i.e., $\omega_\ell(i,j)=-\overline{\omega}_\ell(j,i)$), while executing the wedge product, notice that the terms in the square brackets are one forms and have already appeared before in the given ordering. Hence their contribution to the entire product is nullified. In the next couple of paragraphs we will explain in detail about this cancellation, the reader who already got convinced may skip them.
 
 If $i>j$, then each of the terms in the square brackets contain either $\omega_\ell(i,m_1)$ or $\omega_{\ell+1}(m_2,j)$, where $m_1<j$  and $m_2>i$. These terms have already appeared in $\lambda_\ell(i,m_1)$ and $\lambda_{\ell+1}(m_2,j)$ respectively, outside the square brackets, which are leading the order we have executed the product.
 
 If $i\leq j$, then each of the terms in the square brackets contain either $\omega_\ell(i,m_1)$ or $\omega_{\ell+1}(m_2,j)$, where $m_1\leq j$, $m_1 \neq i$, $m_2>i$ and $m_2\neq j$. For the case $j\geq m_1>i$, by skew hermitian property of $\Omega_\ell$, we have $\omega_\ell(i,m_1)=-\overline{\omega}_\ell(m_1,i)$ which has already appeared in $\overline{\lambda}_\ell(m_1,i)$, outside the square brackets. For the case $m_1<i$, $\omega_\ell(i,m_1)$ has already appeared in $\lambda_\ell(i,m_1)$ outside square brackets. Similarly for the case $m_2<j$,we have $\omega_{\ell+1}(m_2,j)=-\overline{\omega}_{\ell+1}(j,m_2)$ which has appeared outside square brackets in $\overline{\lambda}_{\ell+1}(j,m_2)$. Lastly in the case of $m_2>j$, $\omega_{\ell+1}(m_2,j)$ has already appeared in $\lambda_{\ell+1}(m_2,j).$ 
 
Therefore, if we assume 
\begin{eqnarray}
\mu_{\ell}(i,j) &=&\left\{
\begin{array}{lcr}
S_\ell (j,j)\omega_\ell (i,j)-S_\ell (i,i)\omega_{\ell+1} (i,j) & \mbox{  if } i>j;\\
dS_\ell (i,j)+S_\ell (i,j)\left(\omega_\ell (i,i)-\omega_{\ell+1} (j,j)\right) & \mbox{ if }i\leq j;
\end{array}
\right.
\end{eqnarray}

then,
\[\bigwedge_{\ell}^{}\bigwedge_{i,j}^{}\left(\lambda_\ell (i,j)\wedge \overline{\lambda}_\ell (i,j)\right)=\bigwedge_{\ell}^{}\bigwedge_{i,j}^{}\left(\mu_\ell (i,j)\wedge \overline{\mu}_\ell (i,j)\right).\]
Recall that $S_\ell=Z_\ell+T_\ell$. For $i>j$, the term $\bigwedge_\ell(\mu_\ell(i,j)\wedge\overline{\mu}_\ell(i,j))$ yields $\big|\prod\limits_{\ell=1}^{k}Z_\ell(i)-\prod\limits_{\ell=1}^{k}Z_\ell(j)\big|^2$. 
 Hence we get,
\begin{align}\label{eqn:manifold_nullification_1}
\bigwedge_{\ell}\bigwedge_{i,j}|\lambda_\ell(i,j)|^2 =\left(\prod_{i>j}\bigg|\prod\limits_{\ell=1}^{k}Z_\ell(i)-\prod\limits_{\ell=1}^{k}Z_\ell(j)\bigg|^2\right)&\bigwedge_{\ell}\left(\bigwedge_{i>j}|\omega_\ell(i,j)|^2\bigwedge_{i}|dZ_\ell(i)|^2\right)\\\times\bigwedge_{\ell}
\bigwedge_{i,j}\big|dT_\ell(i,j)&+T_\ell(i,j)\left(\omega_\ell(i,i)-\omega_{\ell+1}(j,j)\right)\big|^2.
\end{align}
Note that we have simplified the notation by denoting $|dz|^2:=dz\wedge d\overline{z}$. Consider the set of unitary matrices $\text{$\mathcal{M}$}_\ell=\{U_\ell:U_\ell^*U_\ell=\text{$\mathbb{I}$}_n,U_\ell(i,i)>0\}$. $\text{$\mathcal{M}$}_\ell$ is a sub manifold of dimension $n^2-n$ in the manifold  $\mathcal{U}\textbf{$(n)$}$. The dimension of $\text{$\mathcal{M}$}_\ell$ can be obtainded by imposing the constrains $\{U_\ell(i,i)>0;i=1,2,\dots,n\}$, on $\mathcal{U}\textbf{$(n)$}$. Now the product $\omega_\ell(m,m)\wedge\left(\bigwedge_{i>j}|\omega_\ell(i,j)|^2\right)=0$, because $\omega_\ell(i,j)$s are one forms on the manifold $\text{$\mathcal{M}$}_\ell$ whose dimension is $n^2-n$ and the product contains $n^2-n+1$ terms. Hence \eqref{eqn:manifold_nullification_1}, will be reduced to,
\begin{align}
\bigwedge_{\ell}\bigwedge_{i,j}&|\lambda_\ell(i,j)|^2\\&=\left(\prod_{i>j}\bigg|\prod\limits_{\ell=1}^{k}Z_\ell(i)-\prod\limits_{\ell=1}^{k}Z_\ell(j)\bigg|^2\right)\bigwedge_{\ell}\bigwedge_{i>j}|\omega_\ell(i,j)|^2\bigwedge_{i}\bigwedge_{\ell}|dZ_\ell(i)|^2\bigwedge_{\ell}\bigwedge_{i<j}|dT_\ell(i,j)|^2\label{eqn:manifold_nullification_2}.
\end{align}

Notice that $\bigwedge_{i>j}|\omega_\ell(i,j)|^2$ is $n^2-n$ form on space of unitary matrices whose diagonal entries are non-negative (or in other words $\text{$\mathcal{U}$}(n)/\text{$\mathcal{U}$}(1)$) and it is invariant under any unitary transformation. Hence the measure induced by this form is Haar measure on $\text{$\mathcal{U}$}(n)/\text{$\mathcal{U}$}(1)$.  We will denote it by $|dH(U_\ell)|$. Therefore we have 

\begin{align}
\bigwedge_{\ell}\bigwedge_{i,j}|\lambda_\ell(i,j)|^2=&\left(\prod_{i>j}\bigg|\prod\limits_{\ell=1}^{k}Z_\ell(i)-\prod\limits_{\ell=1}^{k}Z_\ell(j)\bigg|^2\right)\\ &\times\bigwedge_{\ell}|dH(U_\ell)|\bigwedge_{i}\bigwedge_{\ell}|dZ_\ell(i)|^2\bigwedge_{\ell}\bigwedge_{i<j}|dT_\ell(i,j)|^2\label{eqn:manifold_nullification_3}.
\end{align}

 Now that we have the basic ingredients ready, we will proceed for the proof of the theorem in the next section.

\section{Proof of Theorem \ref{chap5:thm1}}
The density of $(A_1,A_2,\ldots,A_k)$ is proportional to
\[
\prod_{\ell=1}^{k}e^{-\Tr( A_{\ell}A_{\ell}^*)}\bigwedge_{\ell=1}^{k}\bigwedge_{i,j=1}^n|dA_{\ell}(i,j)|^2
\]
where $|dA_{\ell}(i,j)|^2=dA_{\ell}(i,j)\wedge d\bar{A}_{\ell}({i,j}).$ Through out the proof, we will ignore the proportionality constants where ever present. Since we are dealing with probability densities, the proportionality  constants can be recovered by equating the integral of the density to $1$.

Let $X_{\ell}=A_{\ell}^{\epsilon_\ell}$ for $\ell=1,2,\ldots,k$. The Jacobian for the transformation $A_\ell \rightarrow A_\ell^{\epsilon_\ell}$ is $|\det(A_\ell)|^{2(\epsilon_\ell-1)n}$. Hence  the  joint density of $(X_1,X_2,\ldots,X_k)$ is
proportional to
\begin{align}
\prod_{\ell=1}^{k}e^{-\Tr( X_{\ell}^{\epsilon_{\ell}}X_{\ell}^{\epsilon_{\ell}*})}\prod_{\ell=1}^k
|\det(X_{\ell})|^{2(\epsilon_{\ell}-1)n}
\bigwedge_{\ell=1}^{k}\bigwedge_{i,j=1}^n|dX_{\ell}(i,j)|^2 \label{eqn:dencityofx}
\end{align}
Using  generalized Schur decomposition we have
\begin{align}
X_{\ell}=U_{\ell}S_{\ell}U_{\ell+1}^*,\;\;\mbox{for $\ell=1,2,\ldots,k,$} \label{eqn:schurdecomposition}
\end{align}
where $U_{k+1}=U_1$ and $U_{\ell}$ are unitary matrices, $S_{\ell}$ are upper triangular matrices. We write  $S_{\ell}$ for $1\leq l \leq k$ as
\begin{equation}
S_{\ell}=Z_{\ell}+T_{\ell}, \label{eqn:diag+uppertriangular}
\end {equation}
where $Z_{\ell}=\mbox{diag}(Z_{\ell }(1),Z_{\ell }(2),\ldots,Z_{\ell}(n))$ and $T_{\ell}$ are strictly upper triangular matrices.
Now from  \eqref{eqn:manifold_nullification_3},
we have
\begin{eqnarray}
\bigwedge_{\ell=1}^{k}\bigwedge_{i,j=1}^n|dX_{\ell}(i,j)|^2&=&\prod_{i<j}|z_i-z_j|^2\bigwedge_{\ell=1}^k\big(|dH(U_\ell)|\bigwedge_{i\le j}|dS_{\ell}(i,j)|^2\big)\\ \nonumber
&=&\prod_{i<j}|z_i-z_j|^2\bigwedge_{\ell=1}^k\bigg(|dH(U_\ell)|\bigwedge_{i<j}|dT_{\ell}(i,j)|^2\bigwedge_{i=1}^n |dZ_\ell(i)|^2\bigg),\label{eqn:{wedge}}
\end{eqnarray}
where
 $|dH(U_{\ell})|$ are independent Haar measures on $\mathcal{U}\text{$(n)$}/\mathcal{U}\text{$(1)$}$ and $z_i=\prod _{\ell=1}^k Z_{\ell}(i)$ for $1\leq i \leq n$. Notice that $z_1,z_2,\ldots,z_n$
 are the eigenvalues of $X_1X_2\cdots X_k$. Now using \eqref{eqn:schurdecomposition}
  and \eqref{eqn:{wedge}},  \eqref{eqn:dencityofx} can be written as
\begin{align}
&\prod_{\ell=1}^{k}\left[e^{-\Tr( S_{\ell}^{\epsilon_{\ell}}S_{\ell}^{\epsilon_{\ell}*})}
 |\det(S_{\ell})|^{2(\epsilon_{\ell}-1)n}\right]\prod_{i<j}|z_i-z_j|^2\\ &\times \bigwedge_{\ell=1}^k\bigg(|dH(U_\ell)|
\bigwedge_{i<j}|dT_{\ell}(i,j)|^2\bigwedge_{i=1}^n |dZ_{\ell}(i)|^2\bigg).\label{eqn:sipmle}
\end{align}
Now our aim is to integrate out all auxiliary variables from \eqref{eqn:sipmle} to get the density of eigenvalues of $X_1X_2\cdots X_k$. 
Observe that if $S=Z+T$ where $Z=diag(x_1,x_2,\ldots,x_n)$ and $T$ is a strictly upper triangular matrix, then
 \[  S^{-1}S^{-1*}=(I+Z^{-1}T)^{-1}Z^{-1}Z^{-1*}(I+Z^{-1}T)^{-1*}.\]
Observe that $Z^{-1}T$ is also a strictly upper triangular matrix. If we let  $P=Z^{-1}T$, we get
$P$ is an upper triangular matrix and
\[
|DP|=\prod_{i=1}^{n-1}\frac{1}{|x_{i}|^{2(n-i)}}|DT|,
\]
where $|DP|=\bigwedge_{i,j}|dP(i,j)|^2$ and $|DT|=\bigwedge_{i,j}|dT(i,j)|^2$.
Now replacing $(I+P)^{-1}$ by $Q$, we have $|DQ|=|DP|$ and therefore
\[
|DT|=\prod_{i=1}^{n-1}|x_{i}|^{2(n-i)}|DQ|.
\]
Hence we have
\begin{align}
e^{-\Tr(S^{-1}S^{-1*})}|DT|&= e^{-\Tr(QZ^{-1}Z^{-1*}Q^*)}\prod_{i=1}^{n-1}|x_{i}|^{2(n-i)}|DQ|\\ 
&= e^{-\sum_{i=1}^{n}\frac{1}{|x_{i}|^2}(\sum_{j=1}^{i-1}|Q(j,i)|^2+1)}\prod_{i=1}^{n-1}|x_{i}|^{2(n-i)}|DQ|.\label{eqn:simplification}
\end{align}
Now using  \eqref{eqn:simplification} for each $S_{\ell}$, $1\leq \ell \leq k$, we get that the expression \eqref{eqn:sipmle} is proportional to
\begin{eqnarray}
\prod_{i<j}|z_i-z_j|^2\prod_{\ell=1}^{k}\bigg[e^{-\sum_{i=1}^n(|Z_{\ell}(i)|^{2\epsilon_{\ell}}
+\frac{1-\epsilon_{\ell}}{2}|Z_{\ell }(i)|^{2\epsilon_\ell}\sum_{j=1}^{i-1}|Q_{\ell}(j,i)|^2
+\frac{1+\epsilon_{\ell}}{2}\sum_{j=i+1}^n|T_\ell(i,j)|^2)}
\\\times\prod_{i=1}^{n-1}|Z_{\ell}(i)|^{(n-i)(1-\epsilon_\ell)}
\prod_{i=1}^{n}|Z_{\ell }(i)|^{2(\epsilon_{\ell}-1)n}\bigg]\bigwedge_{\ell=1}^k|dH({U_{\ell}})|
|DQ_{\ell}|^{\frac{1-\epsilon_{\ell}}{2}}|DT_{\ell}|^{\frac{1+\epsilon_{\ell}}{2}}|DZ_{\ell}|.\nonumber
\end{eqnarray}

Now integrating this expression with respect to all variables except the variables of $Z_{\ell}$ for $\ell=1,2,\ldots,k$, we have
(omitting the constant)
\begin{eqnarray}\label{eqn:result1}
\prod_{i<j}|z_i-z_j|^2\prod_{\ell=1}^{k}\left(e^{-\sum_{i=1}^n(|Z_{\ell }(i)|^{2\epsilon_{\ell}})}
\prod_{i=1}^{n}|Z_{ \ell }(i)|^{(\epsilon_{\ell}-1)(n+1)}\right)\bigwedge_{\ell=1}^k |DZ_{\ell}|.
\end{eqnarray}
Now integrating the above expression on the respective hypersurfaces given by \linebreak  $\left\{(Z_{1}(i),Z_{2}(i),\dots, Z_{k}(i)):Z_{1}(i)Z_{2}(i)\dots Z_{k}(i)=z_i\right\}$, for $i=1,2,\dots,n$
 we get the  density of the eigenvalues $(z_1,z_2,\ldots,z_n)$ of
$A_1^{\epsilon_1}A_2^{\epsilon_2}\cdots A_k^{\epsilon_k}$.
This completes the proof of the theorem. \qed

\chapter{Probability that products of real random matrices have all eigenvalues real tend to $1$.}
\label{ch:realeigenvalues3}
\section{Background}
In this chapter we consider products of real random matrices with fixed size. 
In \cite{arul}, Arul Lakshminarayan observed an interesting phenomenon in products of Ginibre matrices. He considered products of i.i.d Ginibre matrices with real Gaussian entries. Let $p_n^{(k)}$ be the probability that product of $n$ such matrices have all real eigenvalues. Using numerical simulations he computed $p_n^{(k)}$. Based on the observations, he conjectured that $p^{(k)}_n$ increases to $1$ with the size of the product.



 Peter Forrester, in \cite{forrester}, considered the case of $k \times k$ Ginibre matrices with real Gaussian entries. He gave a formula for $p_n^{(k)}$. From that formula he deduced that this probability increases to $1$ exponentially.

We state a generalization of the conjecture stated in \cite{arul}.
\begin{conjecture}\label{con1}Let $X_1,X_2, \dots X_n$ be i.i.d. matrices of size $k \times k$, whose entries are i.i.d. real random variables distributed according to probability measure $\mu$ and $A_n=X_1X_2\dots X_n$. Then,
	$$\lim\limits_{n\rightarrow \infty}\Pr(A_n\text{ has all real eigenvalues})=1.$$
\end{conjecture}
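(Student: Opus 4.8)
Since the conjecture is open in general, the plan is to establish it in the special case considered in this thesis --- when $\mu$ has an atom --- and then to explain where the general case stalls. The guiding observation is that a rank-one real matrix has all eigenvalues real: if $M=uw^{T}$ with $u,w\in\mathbb{R}^{k}$, its characteristic polynomial is $\lambda^{k-1}(\lambda-w^{T}u)$, whose roots $0$ and $w^{T}u$ are all real. So the first step is to arrange that $A_n$ be of rank at most one with probability tending to $1$, and an atom of $\mu$ makes this immediate.

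Concretely, suppose $\mu(\{a\})=p>0$, and write $\mathbf{1}=(1,\dots,1)^{T}\in\mathbb{R}^{k}$. For each $j$, let $E_j$ be the event that every entry of $X_j$ equals $a$, i.e. $X_j=a\,\mathbf{1}\mathbf{1}^{T}$; then $\Pr(E_j)=p^{k^{2}}>0$ and $E_1,\dots,E_n$ are independent. On $E_j$ one has
\[
A_n=\big(X_1\cdots X_{j-1}\big)\big(a\,\mathbf{1}\mathbf{1}^{T}\big)\big(X_{j+1}\cdots X_n\big)=a\,u\,w^{T},
\]
with $u=(X_1\cdots X_{j-1})\mathbf{1}$ and $w^{T}=\mathbf{1}^{T}(X_{j+1}\cdots X_n)$ real (empty products being the identity, which covers $j=1$ and $j=n$). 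Hence $A_n$ has rank at most one and, by the observation above, all its eigenvalues are real. Therefore I would conclude
\[
\Pr\big(A_n\text{ has all real eigenvalues}\big)\ \ge\ \Pr\Big(\bigcup_{j=1}^{n}E_j\Big)=1-\big(1-p^{k^{2}}\big)^{n}\ \longrightarrow\ 1\qquad(n\to\infty),
\]
proving the conjecture in this case, with an exponential rate. (If $a=0$ it is even simpler: $A_n$ is the zero matrix on $E_j$.) One can replace $E_j$ by any fixed rank-one pattern of positive probability; the constant matrices are just the most convenient.

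The hard part is the atomless conjecture, and I do not see how to attack it by the method above: when $\mu$ is nonatomic, no prescribed matrix occurs with positive probability, so the ``single good factor'' mechanism disappears, $A_n$ is generically of full rank, and one must really understand the asymptotic geometry of the spectrum of a long product. A plausible route would be to show that the largest singular value dominates all others (a simple top Lyapunov exponent), so that $A_n$ rescaled behaves like a rank-one matrix and at most one eigenvalue can sit away from the real axis, and then to kill that eigenvalue with a reflection/conjugation-symmetry argument forcing it onto $\mathbb{R}$ --- roughly the content of Forrester's exact computation in the Gaussian case. Turning such an argument into a statement uniform over all $\mu$ is, I expect, where the genuine difficulty lies.
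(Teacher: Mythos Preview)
Your proof of the atomic case is correct and is essentially the same argument as the paper's: both observe that if some factor $X_j$ has all entries equal to the atom then it (and hence $A_n$) has rank at most one, and a rank-one real matrix has only real eigenvalues; independence of the $E_j$ then gives the bound $1-(1-p^{k^2})^n\to 1$. Your write-up is somewhat more explicit (you exhibit the factorisation $A_n=a\,uw^T$ and the characteristic polynomial $\lambda^{k-1}(\lambda-w^Tu)$), but the idea and the estimate are identical; your remarks on the atomless case are reasonable speculation and go beyond what the paper attempts.
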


\section{Results}
We prove the conjecture for the special case when the probability measure $\mu$ has an atom i.e., there is $x \in \mathbb{R}$ such that $\mu(\{x\})>0$. The proof for this case is based on a simple observation that rank of product of matrices is at most the minimum of the ranks of the individual matrices. In the given scenario, each individual matrix will be of rank at most $1$ with non zero probability. If a real matrix has rank at most $1$, then it has all real eigenvalues (they are $0$ and the trace of the matrix). The following theorem formalizes the result.

\begin{theorem} 
Let $X_1,X_2, \dots X_n$ be i.i.d. matrices of size $k \times k$, whose entries are i.i.d. real random variables distributed according to an atomic probability measure $\mu$ and $A_n=X_1X_2\dots X_n$.  Then, 
\[ 
\lim\limits_{n\rightarrow\infty}\Pr(A_n \text{ has all real real eigenvalues})=1.
\]

\end{theorem}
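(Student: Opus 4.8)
The plan is to use the atom of $\mu$ to force, with probability tending to $1$, that at least one of the factors $X_i$ has rank at most $1$, and then to combine this with two elementary facts: that the rank of a product is at most the minimum of the ranks of its factors, and that a real square matrix of rank at most $1$ has all of its eigenvalues real.

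First I would dispose of the linear algebra. If $M$ is a real $k\times k$ matrix with $\operatorname{rank}(M)\le 1$, write $M=uv^{T}$ with $u,v\in\mathbb{R}^{k}$ (allowing $u=v=0$ when $M=0$); then $\det(\lambda I-M)=\lambda^{k-1}(\lambda-v^{T}u)$, so the eigenvalues of $M$ are $0$ with multiplicity $k-1$ together with $\operatorname{tr}(M)=v^{T}u\in\mathbb{R}$, all real. Also, iterating the standard inequality $\operatorname{rank}(AB)\le\min\{\operatorname{rank}(A),\operatorname{rank}(B)\}$ gives $\operatorname{rank}(X_{1}\cdots X_{n})\le\min_{1\le i\le n}\operatorname{rank}(X_{i})$, so it suffices to show that $\Pr(\operatorname{rank}(X_{i})\le 1 \text{ for at least one } i\le n)\to 1$.

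Now let $x\in\mathbb{R}$ be an atom of $\mu$ and set $p:=\mu(\{x\})>0$. Writing $J$ for the all-ones $k\times k$ matrix, the event $E_{i}:=\{X_{i}=xJ\}$ has probability $q:=p^{k^{2}}>0$, since the $k^{2}$ entries of $X_{i}$ are i.i.d.\ $\mu$, and the events $E_{1},\dots,E_{n}$ are independent. Because $xJ$ has rank at most $1$ (rank $1$ if $x\neq 0$, rank $0$ if $x=0$), on $\bigcup_{i=1}^{n}E_{i}$ the product $A_{n}$ has rank at most $1$ and hence, by the first step, all eigenvalues real. Therefore $\Pr(A_{n}\text{ has all real eigenvalues})\ge 1-(1-q)^{n}\to 1$, indeed geometrically fast; the case $k=1$ is trivial since every real number is a real eigenvalue. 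Writing up the proof would be a matter of assembling these three short observations.

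As for the main difficulty: there is no serious obstacle here, the statement being essentially a short observation once one isolates the right rank-$1$ event. The only points that need a word of care are verifying that rank at most $1$ genuinely forces a real spectrum (handled by the explicit characteristic polynomial above, including the degenerate case $M=0$) and confirming that the events $E_{i}$ are independent, which is immediate from the i.i.d.\ structure of the entries.
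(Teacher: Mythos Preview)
Your proposal is correct and follows essentially the same approach as the paper: force one factor to equal the constant matrix $xJ$ (which has rank at most $1$) with probability $\mu(\{x\})^{k^2}$, use independence to get the bound $1-(1-\mu(\{x\})^{k^2})^n$, and conclude via the rank inequality and the fact that a real rank-$\le 1$ matrix has real spectrum. If anything, your write-up is slightly more detailed than the paper's, since you explicitly compute the characteristic polynomial $\lambda^{k-1}(\lambda-\operatorname{tr}M)$ rather than asserting the rank-$1$ spectrum fact without justification.
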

\begin{proof}
Let $x$ be an atom of measure $\mu$. Then $X_j$ has rank at most $1$, with probability at least $\mu(\{x\})^{k^2}$. All the matrices $X_j$s are independent of each other. Therefore,
\begin{align}
\Pr(A_n \text{ has rank at most } 1 ) & \geq \Pr(\text{at least one of }X_1,X_2,\dots,X_n \text{ has rank at most }1),\\
& \geq 1-(1-\mu(\{x\})^{k^2})^n.\label{eqn:chapter4:lemma1:1}
\end{align}
 
We know that real matrices with rank at most $1$, have all eigenvalues real. Hence,
$$
\Pr (A_n \text{ has all real eigenvalues}) \geq \Pr(A_n \text{ has rank at most }1).
$$ 	
Hence from above and \eqref{eqn:chapter4:lemma1:1}	we have,  $\lim\limits_{n\rightarrow\infty}\Pr(A_n \text{ has all real eigenvalues})=1.$
\end{proof}

We make the following observation about $2 \times 2$ real matrices. It says that if the rows or columns of a real random matrix are exchangeable then that matrix has both real eigenvalues with probability at least $\frac{1}{2}$. Later this will be applied to product of $2 \times 2$ real Ginibre matrices, which gives us the probability that the product of $2 \times 2$ real Ginibre matrices is at least $\frac{1}{2}$.

\begin{lemma}\label{halfbound}
Let $M = \left[\begin{smallmatrix} a&b\\ c&d \end{smallmatrix}\right]$, where $(a,b)$ and $(c,d)$ are real exchangeable random variables. Then,
 $$
\Pr(M \text{ has both real eigenvalues}) \geq \frac{1}{2}.
$$
\end{lemma}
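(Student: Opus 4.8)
The plan is to reduce the event ``$M$ has both eigenvalues real'' to the sign of the discriminant of $M$, and then to exploit a deterministic sign cancellation between $M$ and the matrix obtained by interchanging its two rows.

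First I would record the elementary fact that a real $2\times2$ matrix $\left[\begin{smallmatrix} a&b\\ c&d\end{smallmatrix}\right]$ has both eigenvalues real exactly when
\[
(\operatorname{tr} M)^2-4\det M=(a+d)^2-4(ad-bc)=(a-d)^2+4bc\ \ge\ 0 .
\]
Next, let $\widehat M=\left[\begin{smallmatrix} c&d\\ a&b\end{smallmatrix}\right]$ be $M$ with its two rows interchanged; a direct computation gives that its discriminant equals $(b-c)^2+4ad$. The key deterministic observation is that, for \emph{every} real matrix $M$, at least one of these two discriminants is nonnegative: if both were negative, then adding $(a-d)^2+4bc<0$ and $(b-c)^2+4ad<0$ would yield $(a+d)^2+(b+c)^2<0$, which is impossible. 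Hence, pointwise on the probability space,
\[
\mathbbm{1}\{M\text{ has both eigenvalues real}\}+\mathbbm{1}\{\widehat M\text{ has both eigenvalues real}\}\ \ge\ 1 .
\]

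Finally I would take expectations in this inequality and use exchangeability of the rows $(a,b)$ and $(c,d)$: since swapping the two rows leaves the joint law of the four entries unchanged, $\widehat M\stackrel{d}{=}M$, so the two indicator events have the same probability. Therefore $2\,\Pr(M\text{ has both eigenvalues real})\ge 1$, which is the desired bound.

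I do not expect a genuine obstacle here; the only point requiring care is the interpretation of ``exchangeable'', namely that interchanging the two rows preserves the joint distribution of the entries — precisely what is needed to pass from the pointwise inequality to the probabilistic one. (The same argument goes through verbatim with columns in place of rows, replacing $\widehat M$ by $\left[\begin{smallmatrix} b&a\\ d&c\end{smallmatrix}\right]$, whose discriminant is again $(b-c)^2+4ad$.)
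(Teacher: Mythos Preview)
Your proof is correct and follows essentially the same route as the paper: compute the discriminant, observe that swapping the two rows yields an equidistributed matrix whose discriminant is $(b-c)^2+4ad$, note that the two discriminants sum to $(a+d)^2+(b+c)^2\ge 0$ so at least one is nonnegative, and average. Your write-up is in fact a bit cleaner, since you work with the simplified forms $(a-d)^2+4bc$ and $(b-c)^2+4ad$ rather than the unexpanded trace--determinant expressions.
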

\begin{proof}
	
	The characteristic polynomial of the matrix $M$ is $P_M(x)=x^2-(a+d)x+(ad-bc)$. The matrix $M$ has all real eigenvalues if the discriminant of the characteristic polynomial,
	 $$(a+d)^2-4(ad-bc) \geq 0.$$
	 Hence the probability that $M$ has both real eigen values is,
	 $$\Pr((a+d)^2-4(ad-bc) \geq 0).$$
	 Because $(a,c)$ and $(b,d)$ are exchangeable we have,
	 $$
	 \Pr((a+d)^2-4(ad-bc) \geq 0)=\Pr((b+c)^2-4(bc-bd) \geq 0).
	 $$ 
	 Therefore,
	 \begin{align}
	 \Pr(M \text{ has both} &\text{ real eigenvalues})\\ &= \frac{1}{2}(\Pr((a+d)^2-4(ad-bc) \geq 0)+\Pr((b+c)^2-4(bc-bd) \geq 0)),\\
	 & \geq \frac{1}{2}\Pr((a+d)^2-4(ad-bc) \geq 0 \text{ or }(b+c)^2-4(bc-bd) \geq 0). \label{eqn:chapter4:lemma2:1}
	 \end{align} 
	 Because $(a+d)^2-4(ad-bc)+(b+c)^2-4(bc-bd)\geq 0$, at least one of $(a+d)^2-4(ad-bc)$ and $(b+c)^2-4(bc-bd)$ is non-negative. Therefore,	 
	 $$
	 \Pr((a+d)^2-4(ad-bc) \geq 0 \text{ or }(b+c)^2-4(bc-bd) \geq 0)=1.
	 $$
	  Combining above and \eqref{eqn:chapter4:lemma2:1} we have that,
	  $\Pr(M \text{ has both real eigenvalues}) \geq \frac{1}{2}.$
	 \end{proof}
	  

Using Lemma \ref{halfbound}, in the case of $2 \times 2$ Ginibre matrices we can obtain that the probability of the products having all real eigenvalues to be at least $\frac{1}{2}$. 
\begin{corollary}
Let $X_1,X_2, \dots X_n$ are i.i.d. matrices of size $2 \times 2$ whose entries are i.i.d real random variables distributed according to probability measure $\mu$ and $A_n=X_1X_2\dots X_n$. Then,

$$
\Pr(A_n\text{ has all real eigenvalues})\geq \frac{1}{2}.
$$
\end{corollary}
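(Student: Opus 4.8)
The statement to prove is the Corollary: if $X_1,\dots,X_n$ are i.i.d.\ $2\times 2$ matrices with i.i.d.\ real entries from $\mu$, and $A_n = X_1\cdots X_n$, then $\Pr(A_n\text{ has all real eigenvalues})\geq \tfrac12$.

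\medskip

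\textbf{Plan.} The natural approach is to reduce the statement to an application of Lemma~\ref{halfbound}. The key observation is that a product of $2\times 2$ matrices with i.i.d.\ real entries, while certainly \emph{not} having i.i.d.\ entries, still enjoys a row- (and column-) exchangeability symmetry inherited from the individual factors. First I would show that the distribution of $A_n$ is invariant under simultaneously swapping its two rows provided one also swaps the two rows of $X_1$; more precisely, writing $\Pi = \left[\begin{smallmatrix} 0 & 1\\ 1 & 0\end{smallmatrix}\right]$ for the $2\times 2$ permutation matrix, we have $\Pi A_n = (\Pi X_1) X_2\cdots X_n$, and since the entries of $X_1$ are i.i.d., $\Pi X_1 \stackrel{d}{=} X_1$ and it is independent of $X_2,\dots,X_n$, so $\Pi A_n \stackrel{d}{=} A_n$. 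Hence if $A_n = \left[\begin{smallmatrix} a & b\\ c & d\end{smallmatrix}\right]$, then $(a,b)\stackrel{d}{=}(c,d)$ jointly, i.e.\ the two rows of $A_n$ are exchangeable. Therefore $A_n$ is exactly of the form to which Lemma~\ref{halfbound} applies (that lemma only uses row exchangeability of the $2\times2$ matrix, via the identity in probability $\Pr((a+d)^2-4(ad-bc)\geq 0) = \Pr((b+c)^2 - 4(bc-bd)\geq 0)$ obtained by swapping the roles of the rows).

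\medskip

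\textbf{Step ordering.} (1)~Fix the permutation matrix $\Pi$ and record that $\Pi^2 = I$ and $\Pi X_1 \stackrel{d}{=} X_1$ because the four entries of $X_1$ are i.i.d. (2)~Use independence of the factors to conclude $\Pi A_n = (\Pi X_1)X_2\cdots X_n \stackrel{d}{=} X_1 X_2\cdots X_n = A_n$, so the rows of $A_n$ form an exchangeable pair. (One may equally note, using $A_n\Pi = X_1\cdots X_{n-1}(X_n\Pi)$, that the columns are exchangeable too, but row exchangeability alone suffices.) (3)~Invoke Lemma~\ref{halfbound} with $M = A_n$ to conclude $\Pr(A_n\text{ has both real eigenvalues})\geq \tfrac12$. (4)~Note ``both real eigenvalues'' for a $2\times2$ matrix is the same as ``all real eigenvalues,'' finishing the proof.

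\medskip

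\textbf{Main obstacle.} There is no serious obstacle; the only thing to be careful about is checking that Lemma~\ref{halfbound} really is stated and used at the right level of generality — it is phrased for a matrix whose \emph{rows} $(a,b)$ and $(c,d)$ are exchangeable (the proof there also silently uses that $(a,c)$ and $(b,d)$, i.e.\ the columns, are exchangeable, which here follows symmetrically from $A_n\Pi \stackrel{d}{=} A_n$ using that $\Pi X_n \stackrel{d}{=} X_n$). So I would make sure to record \emph{both} the row and column exchangeability of $A_n$ before citing the lemma, so that every symmetry the lemma's proof relies on is available. Everything else is immediate.
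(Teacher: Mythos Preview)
Your proposal is correct and essentially identical to the paper's proof: both show $\Pi A_n \stackrel{d}{=} A_n$ via $\Pi X_1 \stackrel{d}{=} X_1$ and independence, deduce row exchangeability of $A_n$, and invoke Lemma~\ref{halfbound}. Your additional remark that one should also record column exchangeability (via $A_n\Pi \stackrel{d}{=} A_n$) is a good catch, since the proof of Lemma~\ref{halfbound} as written in the paper actually uses exchangeability of the columns $(a,c)$ and $(b,d)$ rather than the rows stated in its hypothesis.
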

\begin{proof}
To satisfy the hypothesis of Lemma \ref{halfbound}, it is enough to show that the rows of the matrix $A_n$ are exchangeable. It can be noticed that the matrices $X_1$ and $\left[\begin{smallmatrix}
0 & 1\\
1 & 0
\end{smallmatrix}\right]X_1$ are identically distributed, so are $A_n$ and $\left[\begin{smallmatrix}
0 & 1\\
1 & 0
\end{smallmatrix}\right]A_n$. Hence the rows of $A_n$ are exchangeable.\end{proof}

\backmatter
\appendix
\bibliographystyle{amsalpha} \bibliography{Thesis}

\end{document}